\def\ep{{\varepsilon}}
\def\R{\mathbb R}
\def\N{\mathbb N}
\def\C{\mathbb C}
\newtheorem{theorem}{\textbf{Theorem}}[section]
\newtheorem{lemma}[theorem]{\textbf{Lemma}}
\newtheorem{proposition}[theorem]{\textbf{Proposition}}
\newtheorem{claim}[theorem]{\textbf{Claim}}
\newtheorem{corollary}[theorem]{\textbf{Corollary}}
\newtheorem{definition}[theorem]{\textbf{Definition}}
\newtheorem{assumption}[theorem]{\textbf{Assumption}}
\theoremstyle{remark}
\newtheorem{rem}[theorem]{\textbf{Remark}}
\title{
Travelling waves for a non-monotone bistable equation with delay: existence and oscillations}
\date{}
\begin{document}

\maketitle

\begin{center}
{\large\bf Matthieu Alfaro \footnote{ IMAG, Universit\'e de
Montpellier, CC051, Place Eug\`ene Bataillon, 34095 Montpellier
Cedex 5, France. E-mail: matthieu.alfaro@umontpellier.fr},  Arnaud Ducrot
\footnote{IMB UMR CNRS 5251, Universit\'e de Bordeaux,
3 ter, Place de la Victoire, 33000 Bordeaux, France. E-mail: arnaud.ducrot@u-bordeaux.fr} and Thomas Giletti \footnote{IECL, Universit\'{e} de Lorraine, B.P. 70239, 54506
Vandoeuvre-l\`{e}s-Nancy Cedex, France. E-mail:
thomas.giletti@univ-lorraine.fr}.}\\
[2ex]
\end{center}


 \tableofcontents

\vspace{10pt}

\begin{abstract} We consider a bistable ($0<\theta<1$ being the three constant steady states) delayed reaction diffusion equation, which serves as a model in population dynamics. The problem does not admit any comparison principle. This prevents the use of classical technics and, as a consequence, it is far from obvious to understand the behaviour of a possible travelling wave in $+\infty$. Combining refined {\it a priori} estimates and a Leray Schauder topological degree argument, we construct a travelling wave connecting 0 in $-\infty$ to \lq\lq something'' which is strictly above the unstable equilibrium $\theta$ in $+\infty$.  Furthemore, we present situations (additional bound on the nonlinearity or small delay) where the wave converges to 1 in $+\infty$, whereas the wave is shown to oscillate around 1 in $+\infty$ when, typically, the delay is large.\\

\noindent{\underline{Key Words:} delayed reaction diffusion equation, non-monotone equation, bistable nonlinearity, travelling wave, oscillations.}\\

\end{abstract}

\section{Introduction}\label{s:intro}

We consider the following delayed reaction-diffusion equation
\begin{equation}\label{eq}
\partial _t u(t,x)=\partial _{xx} u(t,x)+f(u(t-\tau,x))-u(t,x),\quad t>0,\; x\in \R,
\end{equation}
where $\tau >0$ is a given time delay. This equation typically describes the spatio-temporal evolution of a population density $u=u(t,x)$ at time $t$ and location $x\in\R$. In this context $f$ describes the birth rate function while the term $-u$ represents the (normalized) death rate. The time delay $\tau$ allows to take into account a maturity period.

The existence and the properties of travelling wave solutions for problem \eqref{eq} have received a lot of attention in the case where the function $f$ is increasing, so that problem \eqref{eq} generates a monotone semiflow. The literature can  mostly be divided into the so-called monostable and bistable cases, the distinction 
being made with respect to the dynamical properties of the ODE problem
\begin{equation*}
\dot{u}=F(u):=f(u)-u.
\end{equation*}
The monostable case contains, as a special case, the so-called Ricker's function $f(u)=\beta ue^{-u}$ for some parameter $\beta>1$, for which \eqref{eq} is then referred to as the diffusive Nicholson's blowflies equation \cite{Nichol, Nichol1}. On the other hand , a typical bistable situation is given by $f(u)=\beta u^2e^{-u}$, for some  parameter $\beta>e$, see \cite{Ma}.

For the study of monostable monotone waves we refer to the works of So, Wu and Zou \cite{So}, Thieme and Zhao \cite{Thieme}, Gourley and So \cite{Gourley}, Li, Ruan and Wang \cite{Li}, Liang and Zhao \cite{Liang}, Ma \cite{MA}, and the references therein. Monotone waves for bistable problems of the form \eqref{eq} have also been investigated and we refer the reader to Schaaf \cite{Sch}, Smith and Zhao \cite{Smi-Zha}, Ma and Wu \cite{Ma}, Wang, Li and Ruan \cite{Wan-Li-Rua, Wang}, Fang and Zhao \cite{Fan-Zha2}. Notice also 
an analysis of the spreading properties of problem \eqref{eq}, in a bistable situation, by Lin and Ruan \cite{Lin}.

Note that the aforementioned works are concerned with monotone birth rate functions $f$ or, more generally, with monotone semiflows. Much less is known when this assumption is removed. Let us underline the analysis of a non-monotone monostable situation  by  Trofimchuk et al. \cite{Trofimchuk}, who prove the existence of travelling waves exhibiting oscillations on one side. 

In this work we shall discuss the existence of travelling wave solutions for problem \eqref{eq} when the birth rate function $f$, of the bistable type, is not assumed to be
monotone  (see below for precise statements). Our set of assumptions prevents the existence
of a comparison principle, and makes the analysis rather involved.

Roughly speaking we prove that, if the nonlinear function $F(u):=f(u)-u$ is bistable between $0$ and $1$ and $\int_0^1 F(u)du>0$, then problem \eqref{eq} admits a travelling wave solution with a positive wave speed.
To prove this result we make use of a Leray-Schauder topological degree argument to construct a solution of a similar problem posed on a bounded domain. Then, using refined estimates, we are able to pass to the limit as the length of the bounded domain tends to infinity. We thus
get a travelling wave in the sense of Definition
\ref{def:tw}, that is connecting $0$ in $-\infty$ to \lq\lq something'' which is above the intermediate equilibrium
in $+\infty$, meaning that the wave is truly bistable. Then, in some regimes, we are able to precise their
behaviours in $+\infty$: convergence to $1$ under an additional bound on $f$ or when the delay is small, 
but oscillations around $1$ for large delay (and an additional assumption on the shape of $f$).

\section{Assumptions and results}\label{s:results}

Throughout the paper, we make the following assumption on the bistable nonlinearity $f$, whose three fixed points are  $0<\theta<1$.

\begin{assumption}[Bistable nonlinearity]\label{ass:f} The function  $f:\R \to [0,\infty)$ is of the class $ C^{1,\gamma}$ on $[0,\infty)$ for some $\gamma >0$. There are 
$0< \theta <1$ such that
$$
f(u)=0 \text{ on } (-\infty,0), \quad f(u)<u \text{ on }
(0,\theta), \quad f(u)>u \text{ on } (\theta,1), \quad f(u)<u
\text{ on } (1,\infty).
$$
We also denote
$$
M:= \max_{[0,1]} f \geq 1,
$$
and assume that $f > \theta$ in $(\theta,M]$ (notice that this is automatically satisfied if $M=1$). 
Moreover
\begin{equation}
\label{ass:integrale}  \int_0^1 (f(u) -u)du >0.
\end{equation}
\end{assumption}

Let us emphasize again that when $f$ is monotone, the situation is well understood: there is a unique (up to translation) travelling wave solution $\left(c,U\right)\in\R\times C^2_b(\R)$, 
which moreover satisfies
\begin{equation*}
U(-\infty)=0,\;\ U(+\infty)=1,\text{ and }U'>0.
\end{equation*}
The existence follows from the results of Fang and Zhao  \cite{Fan-Zha2}, see also \cite{Sch}, while uniqueness is ensured by the results of Smith and Zhao \cite{Smi-Zha}. 

 In this work no monotony assumption on the nonlinearity is made. We do however assume $f\geq 0 = f(0)$, to insure the positivity of travelling waves, and $f>\theta=f(\theta)$ on $(\theta,M]$, to insure a truly bistable behaviour of travelling waves. Nonetheless our assumption allows the situation $M>1=f(1)$. In such a case,  oscillations around~1 may occur when the delay is large, see below. On the other hand, if either $M=1$ or the delay is small,  we will show that the travelling wave does converge to~1 at $+\infty$. Notice that our hypothesis allows the degenerate  situations $f'(0)=1$, $f'(\theta)=1$, $f'(1)=1$. 

{}From the mathematical point of view, assumption \eqref{ass:integrale}  insures that we can work with positive speeds. Hence, when
considering the travelling wave equation, the information is to be searched \lq\lq on the left''. If we cannot insure positivity of
 speeds, then the analysis becomes much more difficult (if possible). Notice that, from the modelling point of view, \eqref{ass:integrale} is hardly a
restriction since, having in mind a Allee effect, the unstable zero $\theta$ is usually small.

For simplicity we make the smoothness assumption $f\in C^{1,\gamma}([0,\infty))$ but this could be relaxed to the conditions that $f$ is Lipschitz and satisfies some kind of H\"{o}lder regularity at 0 and 1, that is the conditions used in \cite{Ber-Nir} whose some of the techniques will be used later on.

\begin{definition}[Bistable travelling wave]\label{def:tw} A (bistable) travelling wave for equation \eqref{eq} is a couple $(c,U)\in \R \times C^2(\R)$, with $U>0$ on $\R$, and such that
\begin{equation*}
\begin{cases}
-U''+cU'=f(U(\cdot -c\tau))-U \quad \text{ in } \R ,\\
U(-\infty)=0, \quad \liminf_{x\to+\infty}U(x)>\theta.
\end{cases}
\end{equation*}
\end{definition}

Notice that the boundary condition as $x\to +\infty$ is understood in a \lq\lq weak'' sense in Definition~\ref{def:tw}, which is quite classical when nonlocal effects that may destabilize the expected boundary condition (1 in the present case) are concerned; see \cite{Ber-Nad-Per-Ryz}, \cite{Alf-Cov-Rao1} in a KPP context or \cite{Alf-Cov-Rao2} in a bistable situation. 

Our first main result consists in constructing such a bistable travelling wave.

\begin{theorem}[Construction of a bistable travelling wave]\label{th:construction} Let Assumption \ref{ass:f} hold. Then, there exists a bistable travelling wave such that $c>0$, $U<M$, and $U\in
C^2_b(\R)$
, with the normalization $U(0)=\theta$. 

Furthermore, $U$ is increasing on $(-\infty,c\tau]$ and satisfies $ U > \theta$ on $[c\tau,+\infty)$.
\end{theorem}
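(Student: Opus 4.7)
The plan is to construct the wave by first solving truncated problems on $[-a,a]$ via a Leray--Schauder topological degree argument, then letting $a \to +\infty$ under uniform a priori estimates, and finally reading off the monotonicity and the pointwise bounds from the construction. Concretely, for each large $a$ I would seek $(c,U) \in \R \times C^{2,\gamma}([-a,a])$ solving
$$
-U''+cU' = f(\widetilde U(\cdot - c\tau)) - U \quad \text{on } (-a,a),\qquad U(-a)=0,\ U(a)=1,
$$
where $\widetilde U$ extends $U$ by $0$ on $(-\infty,-a]$ and by $1$ on $[a,+\infty)$; since $c$ is unknown, I close the system with the normalization $U(0)=\theta$, which both fixes the translation and selects $c$.

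Existence for the truncated problem is obtained by a Leray--Schauder homotopy $s\in[0,1]$ linking it to a reference problem -- either the delay-free bistable case $\tau=0$ or a monotonized nonlinearity -- where the degree is nonzero thanks to classical Berestycki--Nirenberg theory \cite{Ber-Nir}. The heart of the argument is then to establish uniform (in $s$ and in $a$) a priori bounds. I expect three ingredients: $(i)$ $0\le U\le M$ via the maximum principle, crucially using the hypothesis $f>\theta$ on $(\theta,M]$ to rule out interior maxima strictly between $\theta$ and $M$; $(ii)$ a uniform two-sided estimate $0<c_\ast \le c\le c^\ast<+\infty$ on the speed, obtained by multiplying the equation by $e^{cx}U'$ and integrating, producing an identity that relates $c\int_{-a}^a e^{cx}(U')^2\,dx$ to an integral of $F=f-\mathrm{id}$ along the profile, so that the sign condition \eqref{ass:integrale} precisely prevents $c\to 0$; $(iii)$ interior $C^{2,\gamma}$ estimates from Schauder theory applied to the linear equation $-U''+cU'+U=f(\widetilde U(\cdot-c\tau))$. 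Together with a nonzero degree at $s=0$, these bounds yield a solution $(c_a,U_a)$.

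Passing to the limit $a\to+\infty$ via Ascoli--Arzel\`a on compact sets produces a candidate $(c,U)$ with $c>0$, $0\le U\le M$, $U\in C^2_b(\R)$, $U(0)=\theta$, and the strict bound $U<M$ follows from the strong maximum principle since $f(M)\le M$ precludes $U\equiv M$. The monotonicity of $U_a$ on the region where $U_a\le\theta$ is inherited from the reference end of the homotopy and preserved along it: indeed on $(-\infty,c_a\tau]$ one has $\widetilde U_a(\cdot-c_a\tau)\le\theta$, hence $f(\widetilde U_a(\cdot-c_a\tau))\le \widetilde U_a(\cdot-c_a\tau)$, which puts the equation into a sub-equilibrium regime amenable to sliding \`a la \cite{Ber-Nir}. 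This yields $U$ non-decreasing on $(-\infty,c\tau]$, so $L:=U(-\infty)$ exists, is a fixed point of $f$ with $L\le\theta$, and the strong maximum principle excludes $L=\theta$, forcing $L=0$.

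The main obstacle, and the most delicate point, is the strict lower bound $U>\theta$ on $[c\tau,+\infty)$, where the non-monotonicity of $f$ really bites and where the hypothesis $f>\theta$ on $(\theta,M]$ becomes essential. First, $U(c\tau)>\theta$ strictly: if instead $U(c\tau)=\theta$, then on $(-\infty,c\tau]$ the function $v:=U-\theta\le 0$ would satisfy $-v''+cv'+v=f(U(\cdot-c\tau))-\theta\le 0$ (since $U(\cdot-c\tau)\le\theta$ there), and an interior zero at $c\tau$ would force $v\equiv 0$ by the strong maximum principle, contradicting $U(-\infty)=0$. Next, if there existed a smallest $x^\ast>c\tau$ with $U(x^\ast)=\theta$, then $x^\ast$ would be a local minimum, so $U'(x^\ast)=0$ and $U''(x^\ast)\ge 0$; the equation at $x^\ast$ then gives $f(U(x^\ast-c\tau))\le\theta$. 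But $U(x^\ast-c\tau)\in(\theta,M]$, since on $[0,c\tau]$ the strict monotonicity (just established) yields $U>\theta$ on $(0,c\tau]$, while on $[c\tau,x^\ast)$ strict inequality $U>\theta$ holds by minimality of $x^\ast$; the assumption $f>\theta$ on $(\theta,M]$ then yields a contradiction. The real technical work lies in the uniform a priori estimates that make the Leray--Schauder scheme go through; the qualitative conclusions above should follow cleanly once those are in hand.
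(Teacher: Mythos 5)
Your overall blueprint (Leray--Schauder degree on a truncated domain, uniform a priori bounds in $(\sigma,a)$, then $a\to\infty$) is indeed the one the paper follows, but there are two substantive gaps, one of which concerns the hardest and most novel part of the theorem.

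\medskip
\noindent\textbf{The $\liminf$ bound is missing.} You argue that $U>\theta$ pointwise on $[c\tau,+\infty)$, but a ``bistable travelling wave'' in the sense of Definition~\ref{def:tw} requires the strictly stronger statement $\liminf_{x\to+\infty}U(x)>\theta$. A pointwise bound $U>\theta$ does not prevent $U(x)\to\theta$ as $x\to+\infty$; since the equation admits the constant solution $\theta$ and there is no comparison principle, this degeneracy cannot be ruled out by soft arguments. This is precisely why the paper spends most of subsection~\ref{ss:tw} establishing, at the level of the truncated profiles and \emph{uniformly in $a$}, the quantitative bound $u^a\geq\theta+\delta^*$ on $[x_a+b(a),a]$ for a fixed $\delta^*>0$ (via Lemma~\ref{lem:further}, the slope estimate \eqref{controlslope2} together with $c\geq c_{\min}>0$, and the choice of $\delta^*$ in \eqref{pastropbas}), which then passes to the limit to give $\liminf_{x\to+\infty}U\geq\theta+\delta^*>\theta$. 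Without a $\delta^*$-type uniform gap, the argument does not close. Your closing remark that ``the qualitative conclusions should follow cleanly once the estimates are in hand'' underestimates this step: it is not qualitative, and it does not follow from $U>\theta$.

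\medskip
\noindent\textbf{The first-crossing argument is flawed.} You claim that if $x^*>c\tau$ is the smallest point with $U(x^*)=\theta$, then $x^*$ is a local minimum, hence $U'(x^*)=0$, $U''(x^*)\geq 0$. This is not automatic: $U$ approaches $\theta$ from above, so a priori $U'(x^*)\leq 0$, and if $U'(x^*)<0$ the profile simply crosses $\theta$ transversally and keeps descending, with no local extremum at $x^*$. To make this type of touching-point argument work, the paper first proves, on the boxes, the slope estimate $u'(x_a)\geq\theta c>0$ (Lemma~\ref{lem:apriorileft2}) and then runs a dedicated degree argument in the smaller set $S_\theta$ (Proposition~\ref{lem:theta}) to produce a solution for which $\theta$ is attained only once. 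This is where the monotonicity structure is anchored; the unwound limit profile inherits it, but the order of quantifiers matters and the box-level construction is essential.

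\medskip
\noindent\textbf{Minor: the speed bound.} The proposed energy identity (multiply by $e^{cx}U'$ and integrate to relate $c\int e^{cx}(U')^2$ to $\int F$) works for the \emph{local} problem but does not close for the delayed equation, because the term $f(U(\cdot-c\tau))U'(\cdot)$ is not a perfect derivative. The paper instead bounds $c$ from above via a parabolic sliding argument against the explicit supersolution $e^{x+c_{\max}t}$, excludes $c=0$ along the whole homotopy by a phase-plane contradiction using \eqref{ass:integrale}, and only needs $c>0$ (not $c\geq c_*$ uniformly in $\sigma$) at the endpoint $\sigma=0$. Note also that the paper normalizes the box solutions by $\max_{[-a,0]}u=\theta/2$ rather than $u(0)=\theta$; the shift to achieve $U(0)=\theta$ is performed only at the very end on the limiting profile. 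This choice is what makes the a priori monotonicity Lemma~\ref{lem:estim_mono} available, and it is built into the operator $\mathcal K_\sigma$ on which the degree is computed.
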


In the sequel we enquire on the behaviour of travelling waves as $x\to +\infty$. Our next result provides two sufficient conditions under which the bistable travelling wave constructed in Theorem~\ref{th:construction} does converge to 1 in $+\infty$.

\begin{theorem}[Convergence to 1]\label{th:small-delay} Let $(c,U)\in\R\times C^2_b(\R)$ be a bistable travelling wave. Assume either
\begin{equation}
\label{add-bound}
M:=\max_{[0,1]} f  = 1 \quad \mbox{(additional bound on $f$)},
\end{equation}
or
\begin{equation}
\label{small-delay}
\tau\Vert f'\Vert _{L^\infty(0,\Vert U\Vert _\infty)}<1 \quad \mbox{(small delay)}.
\end{equation}
Then $U(+\infty)=1$.
\end{theorem}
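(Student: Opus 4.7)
My plan is to introduce $\ell^- := \liminf_{x\to +\infty} U(x)$ and $\ell^+ := \limsup_{x\to +\infty} U(x)$, so that the goal becomes $\ell^- = \ell^+ = 1$. Definition~\ref{def:tw} gives $\ell^- > \theta$, while Theorem~\ref{th:construction} provides $U < M$. Since $f \in C^{1,\gamma}$ and $U \in C^2_b$, bootstrapping the equation yields uniform $C^{2,\gamma'}$ bounds on $U$, so that for any sequence $x_n \to +\infty$, up to extraction $U(\cdot + x_n) \to U_\infty$ in $C^2_{\mathrm{loc}}(\R)$, with $U_\infty$ still satisfying the travelling wave equation and $\ell^- \leq U_\infty \leq \ell^+$. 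All subsequent analysis will be run on such shift limits.

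\textbf{Case (a), $M=1$.} This will follow from a maximum-principle inspection at the liminf. Since $U < M = 1$, every shift limit takes values in $[\ell^-,1]\subset(\theta,1]$. Pick $U_\infty^-$ with $U_\infty^-(0)=\ell^-$ a global minimum, so that $(U_\infty^-)'(0)=0$ and $(U_\infty^-)''(0)\geq 0$; evaluating the equation at $0$ gives $\ell^- \geq f(y^-)$ with $y^- := U_\infty^-(-c\tau)\in(\theta,1]$. If $y^- < 1$, the bistability $f>\mathrm{id}$ on $(\theta,1)$ forces $\ell^- \geq f(y^-) > y^- \geq \ell^-$, a contradiction. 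Hence $y^- = 1$, which forces $\ell^+ \geq 1$, so $\ell^+ = 1$; then $\ell^- \geq f(1) = 1$ yields $\ell^- = 1$.

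\textbf{Case (b), $\tau L<1$ with $L:=\|f'\|_{L^\infty(0,\|U\|_\infty)}$.} Here I will use an energy identity to obtain $U' \in L^2(\R)$, then conclude as in (a). Setting $F(u):=f(u)-u$ and
\[H(x) := \tfrac12 (U'(x))^2 + \int_0^{U(x)} F(s)\,ds,\]
a direct computation using the equation gives
\[H'(x) = c(U'(x))^2 + U'(x)\bigl[f(U(x)) - f(U(x-c\tau))\bigr].\]
Since $U, U'$ are bounded, $H$ is bounded on $\R$. The delay remainder is controlled by $|f(U(x))-f(U(x-c\tau))| \leq L\int_{x-c\tau}^x |U'(s)|\,ds$, and a change of variable plus Cauchy--Schwarz/Fubini yields
\[\int_\R |U'(x)|\int_{x-c\tau}^x |U'(s)|\,ds\,dx \leq c\tau\int_\R (U')^2.\]
Integrating $H'$ on a large interval and using $c>0$ (inherited from Theorem~\ref{th:construction} under \eqref{ass:integrale}) together with $L\tau<1$ produces $c(1-L\tau)\int_\R (U')^2 \leq \mathrm{const}$, hence $U' \in L^2(\R)$. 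Boundedness of $U''$ (from the equation) makes $U'$ uniformly continuous, so $U'(x) \to 0$ as $x\to+\infty$. Therefore every shift limit $U_\infty$ has $U_\infty' \equiv 0$ and is constant, and the equation forces this constant to be a fixed point of $f$, i.e.\ one of $0,\theta,1$. Combined with $\ell^- > \theta$ and the absence of fixed points above $1$, both $\ell^-$ and $\ell^+$ must equal $1$.

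\textbf{Main obstacle.} The delicate step is case (b): the delay remainder $f(U)-f(U(\cdot-c\tau))$ is not a total derivative, and the condition $\tau L<1$ is exactly what enables its absorption by the coercive $c(U')^2$ term in the energy identity (which, in turn, requires $c>0$, a fact I import from Theorem~\ref{th:construction}). Once $U'\in L^2$ is obtained, the rest of the argument---passing to $U'(+\infty)=0$, identifying shift limits as constant fixed points of $f$, and ruling out $0$ and $\theta$ by $\ell^->\theta$---is standard.
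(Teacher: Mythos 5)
Your proof follows essentially the same two routes as the paper: a maximum-principle inspection of the liminf for case (a), and an energy identity giving $U'\in L^2(\R)$ and hence $U'(+\infty)=0$ for case (b). The computations themselves are sound (your functional $H$ is, up to grouping, exactly what one gets by multiplying the equation by $U'$; the Cauchy--Schwarz/Fubini absorption of the delay term is the paper's Lemma~\ref{lem:uL2}). However, there is a genuine gap at the outset, and it affects both cases.

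Theorem~\ref{th:small-delay} is stated for an \emph{arbitrary} bistable travelling wave $(c,U)\in\R\times C^2_b(\R)$ in the sense of Definition~\ref{def:tw}. Neither $U< M$ nor $c>0$ (nor even $c\neq 0$) is part of that definition. Theorem~\ref{th:construction} asserts the \emph{existence} of one wave with those extra properties; it does not say that every bistable travelling wave satisfies them. So writing ``Theorem~\ref{th:construction} provides $U< M$'' in case (a), and ``$c>0$ inherited from Theorem~\ref{th:construction}'' in case (b), leaves two unproved claims on which your argument genuinely depends: without $U\leq 1$ you cannot place $y^-$ in $(\theta,1]$, and without $c\neq 0$ the energy inequality $c\,R_{A,B}\leq \mathrm{const}+|c|\tau L\sqrt{R_{A,B}(R_{A,B}+\mathrm{const})}$ gives no bound on $R_{A,B}$. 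The paper devotes explicit preliminary paragraphs to exactly these points. For $c\neq 0$, one argues by contradiction: if $c=0$ the equation becomes local, a phase-plane argument combined with $U(-\infty)=0$ and $\liminf_{x\to+\infty}U>\theta$ forces $U(+\infty)=1$, and multiplying by $U'$ and integrating over $\R$ yields $\int_0^1(f(u)-u)\,du=0$, contradicting~\eqref{ass:integrale}. For $U\leq 1$ in case (a), one rules out $\sup_\R U>1$ (attained, or as $\limsup_{x\to+\infty}$ via a shift limit) by evaluating the equation at the supremum: since $f\leq 1$ on $[0,1]$ and $f<\mathrm{id}$ on $(1,\infty)$, a strict interior maximum above $1$ is impossible. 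Note also that only $c\neq 0$ is needed in case (b): if you rearrange before bounding and take absolute values you get $|c|\,R_{A,B}$ on the left, so assuming $c>0$ is both unjustified here and unnecessary. Once these preliminaries are supplied, the rest of your argument is correct and coincides with the paper's proof.
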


As far as travelling waves of the nonlocal Fisher-KPP equation 
$$
\partial _t u=\partial _{xx} u+u(1-\phi *u)
$$
are concerned the convergence to 1 is known to hold not only for focused competition kernels $\phi$, see \cite{Ber-Nad-Per-Ryz}, but also for large speeds, and this even if the state 1 is driven unstable by the kernel $\phi$, see \cite{Fan-Zha} and \cite{Alf-Cov}. Also, for cases of convergence to 1, we refer to \cite{Gom-Tro} for a Fisher-KPP equation with delay and to \cite{Alf-Cov-Rao2} for a nonlocal bistable equation.

Now, we wonder if there are travelling waves that connect 0 in $-\infty$ to  a wavetrain oscillating around $1$  in $+\infty$. Such connections zero-wavetrain are known
to exist in some nonlocal equations. For instance, we refer to
\cite{Nad-Per-Ros-Ryz} for a simplified model which allows
to perform explicit computations. See also the works \cite{Trofimchuk}, \cite{ducrot}, \cite{Gom-Tro},  \cite{Has-Tro}, \cite{Duc-Nad} in KPP situations. In our bistable context, our last main result shows that such a connection zero-wavetrain can also exist:
roughly speaking, when both the time delay $\tau$ and the slope of the nonlinearity $f$ at $u=1$ are large enough, the travelling wave  provided by Theorem \ref{th:construction} does not converge to $1$ in $+\infty$, and oscillates around 1. In order to capture this behaviour, we need to strengthen Assumption \ref{ass:f} as follows.

\begin{assumption}[Bistable nonlinearity allowing oscillations]\label{ass:f-osc}
In addition to Assumption \ref{ass:f}, we suppose that the following properties hold true.
\begin{itemize}
\item[$(i)$] There exist two values $\alpha$ and $\beta$ such that
$\theta<\alpha<\beta<1$ and
\begin{equation*}
\begin{cases}
f'>0\text{ on $(0,\beta)$},\\
f'<0\text{ on $\left(\beta, M:=f(\beta)\right)$},\\
f(\alpha)=1\text{ and }f(M)>\alpha.
\end{cases}
\end{equation*}
\item[$(ii)$] $\displaystyle \int_0^{f(M)} \left(\min\{f(u),f(M)\}-u\right)du>0$.
\end{itemize}
\end{assumption}

Our result on oscillating travelling waves then reads as follows.

\begin{theorem}[Oscillations around 1 for large delay]\label{th:large-delay} Let Assumption \ref{ass:f-osc} hold. If $|f'(1)|$ is large enough then there exists $\tau_0>0$ such that, for all $\tau\geq \tau_0$, the travelling wave solution $\left(c,U\right)$ of~\eqref{eq} provided by Theorem \ref{th:construction} is such that the profile $U$ does not converge to $1$ as $x\to+\infty$, and oscillates around $1$.
\end{theorem}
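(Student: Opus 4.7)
The strategy is a proof by contradiction based on the characteristic equation of the linearisation at the equilibrium $u=1$.

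Setting $V:=U-1$ and formally linearising the wave equation at $u=1$, one obtains
\[
-V''+cV'+V-f'(1)V(\cdot-c\tau)=O\bigl(V^2(\cdot-c\tau)\bigr),
\]
with characteristic function $\Delta(\lambda)=-\lambda^2+c\lambda+1-f'(1)e^{-\lambda c\tau}$. By Assumption~\ref{ass:f-osc}(i), the point $1\in(\beta,M)$ lies in the decreasing part of $f$, so $f'(1)<0$. A short convexity study of $\mu\mapsto\Delta(-\mu)$ on $[0,+\infty)$ shows that, as soon as $|f'(1)|\tau$ and $|f'(1)|(c\tau)^2$ are both sufficiently large, this function is convex, non-decreasing, and starts at $1+|f'(1)|>0$; hence $\Delta$ has \emph{no} real root in $(-\infty,0]$. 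Combining this with an \emph{a priori} upper bound on $c$ (coming from the linearisation at $0$) and a positive lower bound on $c$ valid for large $\tau$ (which is where the positive-integral Assumption~\ref{ass:f-osc}(ii) enters), the hypothesis ``$|f'(1)|$ large enough and $\tau\geq\tau_0$'' indeed guarantees the absence of any real non-positive root of~$\Delta$.

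Now assume, for a contradiction, that $U$ does not oscillate around~$1$. Then $V=U-1$ keeps a constant sign on some half-line $[X_0,+\infty)$; without loss of generality $V\leq 0$ there, the opposite sign being treated analogously thanks to the a priori bound $U<M$ of Theorem~\ref{th:construction}. The first task, and the main obstacle of the proof, is to show that this assumption forces $U\to 1$ at $+\infty$. To this end I would use the classical translation-and-compactness scheme: Schauder $C^{2,\gamma}$ estimates on $U$ yield, from any sequence $x_n\to+\infty$, a limiting entire profile $\tilde U$ of the wave equation with $\theta<\tilde U\leq 1$. Analysing the equation at an infimum point of $\tilde U$ and using that $f(u)>u$ on $(\theta,1)$ rules out any limit value below~$1$ unless $\tilde U$ stays trapped in $[0,f(M)]$, and Assumption~\ref{ass:f-osc}(ii) is precisely the positive-integral condition excluding such trapped non-constant entire profiles. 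One concludes $\tilde U\equiv 1$, so $\liminf_{+\infty}U=\limsup_{+\infty}U=1$, i.e.\ $U\to 1$.

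Once $U\to 1$ is established, the function $V=U-1$ is bounded, of constant sign on $[X_0,+\infty)$, tends to $0$ at $+\infty$ and satisfies the linear delayed equation above modulo a quadratic remainder. A classical oscillation result for linear delay equations (in the spirit of the arguments used in \cite{ducrot,Gom-Tro}) then asserts that a non-trivial eventually one-signed bounded solution decaying to $0$ can exist only if $\Delta$ admits a real root in $(-\infty,0]$. This contradicts the conclusion of the first step, so $U$ must oscillate around~$1$; sharpening the same linear analysis (for instance by also inspecting the growing modes of $\Delta$) further excludes damped oscillation to~$1$, hence $U\not\to 1$ as well.
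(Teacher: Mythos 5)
There is a genuine gap, in fact two intertwined gaps, at the core of your argument.

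First, the claim that $\Delta$ has no real root in $(-\infty,0]$ because $|f'(1)|(c\tau)^2$ is large is unjustified, and it is in fact the opposite of what happens. The hypothesis gives you control over $|f'(1)|$ and $\tau$ separately, but not over the product $c\tau$: the wave speed $c$ is not at your disposal, and one can show (indeed the paper does, inside Lemma~\ref{LE-conv}) that $c\to 0$ as $\tau\to\infty$, so $h:=c\tau$ remains bounded and may well be \emph{small}. Concretely, writing $s=-\lambda\geq 0$, one has $\Delta(-s)\approx -s^2 + |f'(1)|e^{sc\tau}$, which vanishes near $s\approx\sqrt{|f'(1)|}$ whenever $sc\tau$ is small, i.e.\ whenever $|f'(1)|(c\tau)^2$ is of order one or less. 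The paper's Lemma~\ref{LE-location} identifies a threshold $h_\mu$ with $h_\mu\to 0$ as $\mu=f'(1)\to-\infty$, and absence of relevant roots requires $h>h_\mu$, so the quantity $|f'(1)|(c\tau)^2\leq |f'(1)|h_\mu^2$ does \emph{not} become large. Consequently the paper never establishes absence of roots for the actual wave; instead it argues by contradiction through an auxiliary monotone bistable problem $g\leq f$ built from Assumption~\ref{ass:f-osc}(ii), whose wave speed $c_h$ is shown to be $>0$ for $h\leq h_\mu$ (small) but would have to be $\leq 0$ if $U\to 1$, and this is the contradiction. This auxiliary-problem comparison (Lemma~\ref{lem:tw-monotone2}, Lemma~\ref{LE-conv}) is entirely missing from your sketch and is what actually closes the argument.

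Second, the appeal to ``a classical oscillation result for linear delay equations'' does not apply here as stated. Linearising the wave equation at $1$ gives a second-order delayed equation, which the paper recasts as a two-dimensional cyclic feedback system $(z_0,z_1)=(U-1,U')$ with delay. In that setting the absence of real nonpositive roots is \emph{not} sufficient to rule out eventually one-signed decaying solutions; one needs a strip estimate $\{\Re(\lambda)\leq 0,\ |\Im(\lambda)|\leq 2\pi\}$ (the strip $S_0$ in the paper) together with the discrete Lyapunov functional and sign-change machinery of Mallet-Paret and Sell, plus the preparatory monotonicity/trapping lemmas (\ref{LE-max-monotone}, \ref{LE-localisation}) which show that the wave is eventually confined between $M_2$ and $M_1$ so that the cyclic feedback is of positive type. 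This is the part of the proof that actually delivers the oscillation conclusion, and your sketch does not touch it. Your compactness argument that non-oscillation forces $U\to 1$ is plausible as a preliminary reduction, but the real difficulties lie downstream of it.
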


Notice that, under an additional assumption, we can show that the above non convergence to~1 is actually a convergence to a wavetrain. This will be explored and precised in subsection \ref{ss:wavetrain}.

\medskip

The organization of the paper is as follows. Section \ref{s:construction} is devoted to the construction of a travelling wave thanks to a Leray-Schauder toplogical degree argument, that is we prove Theorem~\ref{th:construction}. In Section \ref{s:properties} we study two cases where the travelling wave converges to 1 in $+\infty$, proving Theorem~\ref{th:small-delay}. Last, in Section \ref{s:oscill}, we consider a case where oscillations occur, that is we prove Theorem~\ref{th:large-delay}.

\section{Construction of a travelling wave}\label{s:construction}

This section is devoted to the proof of Theorem \ref{th:construction}. The strategy is to first construct a solution in a box by a Leray-Schauder topological degree argument and then to let the box tend to the whole line, with enough estimates to guarantee a true bistable behaviour in the limit.

For $a>0$ and $0\leq \sigma \leq 1$, we consider the problem of
finding a speed $c=c_\sigma^a\in \R$ and a profile
$u=u_\sigma^a:[-a,a]\to \R$ such that
$$
P_\sigma(a)\quad\begin{cases}
\,-u''+cu^{\prime}=f(\bar u(\cdot -\sigma c\tau))-u\quad \text{ in }(-a,a)\vspace{5pt}\\
\,u(-a)=0, \quad  u(a)=1,\\
\end{cases}
$$
to which we shall sometimes add the normalization
\begin{equation}\label{normalization}
\max_{-a\leq x\leq 0}u(x)= \frac{\theta}{2},
\end{equation}
and where $\bar u$ denotes the extension of $u$ equal to $0$ on
$(-\infty,-a)$ and 1 on $(a,\infty)$ (in the sequel, for ease of
notation, we always write $u$ in place of $\bar u$). This realizes
a homotopy from a local problem ($\sigma =0$) to our nonlocal
delayed problem ($\sigma=1$) in the box $(-a,a)$. We shall
construct a solution to $P_1(a)$ by using a Leray-Schauder
topological degree argument. To make this scheme rigorous we will need several a priori bounds which are proved in the following subsection.

\subsection{A priori estimates}\label{ss:apriori}

Here we prove several lemmas on any profile $u$ and speed $c$ solving $P_\sigma (a)$ together with the normalization~\eqref{normalization}.

\begin{lemma}[A priori bounds for profiles]\label{lem:estim1}
For all $a >0$ and $0 \leq \sigma \leq 1$, any solution $(c_\sigma^a ,u_\sigma^a)=(c,u)$ of $P_\sigma (a)$ satisfies
$$
0\ <u < M \text{ on } (-a,a).
$$
\end{lemma}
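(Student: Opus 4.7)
The plan is to apply classical elliptic strong maximum principles for the linear operator $Lw := -w'' + cw' + w$ (whose zeroth-order coefficient $+1$ is nonnegative) twice: once to $u$ itself to derive the lower bound $u > 0$, and once to $v := u - M$ to derive the upper bound $u < M$. Rewriting $P_\sigma(a)$ as $Lu = f(\bar u(\cdot - \sigma c\tau))$, the right-hand side plays, for a fixed $u$, the role of a given bounded continuous nonhomogeneous source (note that $\bar u$ is continuous on $\R$ because the matching $u(-a)=0$, $u(a)=1$ is built into its definition), so the usual maximum principles apply verbatim.

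For the lower bound, I would observe that $f \geq 0$ on $\R$ forces $Lu \geq 0$. The weak minimum principle first gives $u \geq 0$ on $[-a,a]$: otherwise the minimum would be negative and attained in the interior, and the strong minimum principle (valid here since $L$ has nonnegative zeroth-order coefficient) would force $u$ to be constant, contradicting $u(-a)=0 \neq 1 = u(a)$. The same strong principle then rules out any interior zero of $u$, since such a point is a nonpositive interior minimum, whence $u > 0$ on $(-a,a)$.

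The upper bound proceeds in two steps. First, I would argue $u \leq M$ pointwise. Assuming for contradiction that $u_{\max} := \max_{[-a,a]} u > M$, and noting $u(\pm a) \leq 1 \leq M < u_{\max}$, this maximum is attained at some interior point $x_M$ where the equation together with $-u''(x_M) \geq 0$ and $u'(x_M)=0$ yields $u_{\max} \leq f(\bar u(x_M - \sigma c \tau))$. A short casework on whether $\bar u(x_M - \sigma c \tau) \in [0,1]$, in which case $f \leq M < u_{\max}$, or $\bar u(x_M - \sigma c \tau) \in (1, u_{\max}]$, in which case the bistable tail $f(s) < s$ on $(1,\infty)$ gives $f(\bar u) < u_{\max}$, produces a contradiction either way.

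Once $u \leq M$ is known, the extension $\bar u$ takes values in $[0,M]$ everywhere, and the same two-case check shows $f(\bar u) \leq M$, hence $Lv \leq 0$ for $v := u - M \leq 0$. If $v(x_0)=0$ at some interior $x_0$, then $v$ would attain a nonnegative interior maximum, and the strong maximum principle applied to $-L$ (whose zeroth-order coefficient $-1$ is nonpositive) would force $v$ to be constant, contradicting $v(-a) = -M < 0$. This yields $u < M$ on $(-a,a)$. The only delicacy is bookkeeping around the nonlocal shift $\cdot - \sigma c \tau$, whose argument may escape $[-a,a]$ and pick up the extension values $0$ or $1$; these fit painlessly into the bounds via $f(0)=0$ and $f(1)=1$, so no extra case is needed.
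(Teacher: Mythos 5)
Your proof is correct and follows essentially the same route as the paper: evaluate the equation at interior critical points to obtain the weak bounds $0\le u\le M$ (using $f\ge 0$, $\max_{[0,1]}f=M$, and $f(s)<s$ for $s>1$), then upgrade to strict inequalities via the strong maximum principle for the operator $-u''+cu'+u$. The paper does the two bounds together and applies the strong maximum principle once at the end, whereas you separate the lower and upper bounds into two passes, but the mathematical content is the same.
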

\begin{proof}
If $u$ reaches its minimum at some point $x_0\in(-a,a)$ then the equation
yields $u(x_0)\geq f(u(x_0-\sigma c \tau))\geq 0$. In a similar fashion, if $u$ reaches its maximum at some point $x_1 \in (-a,a)$, then 
$u(x_1)\leq f(u(x_1-\sigma c \tau))$. On the one hand, if $u(x_1 - \sigma c \tau) >1$, then $f (u(x_1 - \sigma c \tau)) < u(x_1 - \sigma c \tau) \leq u(x_1)$, which is a contradiction. On the other hand, if $u(x_1 - \sigma c\tau) \leq 1$ then we conclude that $u (x_1) \leq \max_{[0,1]} f =M$. 

This proves that $0\leq u\leq M$
on $(-a,a)$. Note that $M$ is also the maximum of $f$ on the interval $[0,M]$. Thus
$$0\leq -u'' + c u' + u = f (u(\cdot - \sigma c \tau)) \leq M,$$
and the strong maximum principle yields  $0<u<M$ on $(-a,a)$.\end{proof}

In the next lemma, we prove some a priori monotonicity property. This will not only provide information on the shape of the travelling wave, but will also be quite useful in order to bound the speed.

\begin{lemma}[A priori monotony for a while]\label{lem:estim_mono}
Let $a >0$ and $0 \leq \sigma \leq 1$ be given, and $(c_\sigma^a ,u_\sigma^a)=(c,u)$ be a solution of $P_\sigma (a)$ with normalization 
\eqref{normalization}. If  $c \geq 0$ then $u$ is increasing on $[-a,\min(x_a + \sigma c \tau,a)]$, where 
\begin{equation}\label{def:xa}
x_a := \min \{x \in \R : u(x) = \theta \} >0.
\end{equation}
\end{lemma}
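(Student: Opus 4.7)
The plan is a proof by contradiction built on a single structural observation. For any $y$ in the interval $J := [-a,\min(x_a+\sigma c\tau,a)]$, the delayed argument satisfies $y-\sigma c\tau \leq x_a$, since $c,\sigma,\tau\geq 0$. Combined with the fact that $u\leq\theta$ on $[-a,x_a]$ (which follows by continuity from $u(-a)=0$ and the minimality in \eqref{def:xa}), together with the extension by $0$ outside $[-a,a]$, this places $u(y-\sigma c\tau)$ in $[0,\theta]$ throughout $J$. The bistable hypothesis on $f$ then yields the pointwise bound
\[
f(u(y-\sigma c\tau))\ \leq\ u(y-\sigma c\tau)\ \leq\ \theta,\qquad y\in J,
\]
with equality in the first inequality only when $u(y-\sigma c\tau)\in\{0,\theta\}$.

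Suppose now, for contradiction, that $u$ is not non-decreasing on $J$: then there exist $y_1<y_2$ in $J$ with $u(y_1)>u(y_2)$. I pick $x^{*}\in[-a,y_2]$ realising the maximum of $u$ on $[-a,y_2]$. Since $u(-a)=0<u(y_1)\leq u(x^{*})$ and $u(x^{*})>u(y_2)$, this $x^{*}$ must be interior to $(-a,y_2)$, so $u'(x^{*})=0$ and $u''(x^{*})\leq 0$. Evaluating the equation of $P_\sigma(a)$ at $x^{*}$ gives $u(x^{*})\leq f(u(x^{*}-\sigma c\tau))$, and combining with the previous bound and the maximum property $u(x^{*}-\sigma c\tau)\leq u(x^{*})$ (valid because $x^{*}-\sigma c\tau\leq x^{*}\leq y_2$ and $u\equiv 0$ outside $[-a,a]$), I obtain the chain
\[
u(x^{*})\ \leq\ f(u(x^{*}-\sigma c\tau))\ \leq\ u(x^{*}-\sigma c\tau)\ \leq\ u(x^{*}),
\]
forcing equality throughout. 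Equality in the middle inequality places $u(x^{*}-\sigma c\tau)$ in $\{0,\theta\}$: the value $0$ contradicts the strict positivity $u>0$ of Lemma \ref{lem:estim1}, while the value $\theta$ forces $x^{*}-\sigma c\tau\geq x_a$ by minimality of $x_a$, hence $x^{*}\geq x_a+\sigma c\tau\geq y_2$, contradicting $x^{*}<y_2$.

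The most delicate point is ensuring that the above argument applies to a genuinely \emph{interior} critical point; this is why I chose to maximise $u$ on the restricted interval $[-a,y_2]$ rather than on $J$ as a whole, and why the normalisation \eqref{normalization} (which guarantees $x_a>0$ and hence places $-a$ strictly below the plateau) is used implicitly. If the statement is to be read as strict monotonicity, a short additional argument applied to $v:=u'$ on any interval where $u$ would be locally constant (where the equation reduces to $f(u(\cdot-\sigma c\tau))\equiv u$, forcing $u\equiv 0$ or $u\equiv\theta$ there via the same fixed-point analysis) completes the proof.
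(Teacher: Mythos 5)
Your argument is correct, and it is cleaner in one respect than the paper's: you handle the local case ($\sigma c\tau=0$) and the genuinely delayed case ($\sigma c\tau>0$) uniformly, by locating an interior maximum of $u$ on $[-a,y_2]$ and testing the equation there; the paper instead splits into two cases. For $\sigma=0$ or $c=0$ the paper runs a Berestycki--Nirenberg sliding argument to prove the \emph{stronger} conclusion that $u$ is strictly increasing on the whole interval $[-a,a]$, which they then invoke later (e.g.\ in the proof of Lemma~\ref{lem:apriori}, where the $c=0$ case requires monotonicity of $u$ past $x_a$); your proof only gives monotonicity on $J=[-a,\min(x_a+\sigma c\tau,a)]$, which is exactly what the lemma claims but less than what the paper's proof actually establishes and uses. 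For $0<\sigma\le1$, $c>0$ the paper defines $\gamma^*=\sup\{\gamma:u\text{ increasing on }[-a,-a+\gamma]\}$, shows $\gamma^*\ge\sigma c\tau$ by the same ``test at a critical point, the delayed argument sits in the zero-extension region'' observation you make, and then pushes $\gamma^*$ past $a+x_a+\sigma c\tau$ by testing at $-a+\gamma^*$; your maximum-point formulation is an equivalent, and arguably more transparent, packaging of the same idea. One small imprecision to tidy up: when equality forces $u(x^*-\sigma c\tau)=0$, this alone does not contradict Lemma~\ref{lem:estim1} if $x^*-\sigma c\tau\le-a$ (there $u$ is extended by $0$); the contradiction really comes from the full equality chain giving $u(x^*)=0$ with $x^*\in(-a,a)$ interior. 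Your strictness remark is also sound: constancy of $u$ on a subinterval of $J$ forces $f(K)=K$ with $K\in[0,\theta]$, and both $K=0$ and $K=\theta$ are excluded (the latter because it would push the interval to the right endpoint $x_a+\sigma c\tau$, or, when $\sigma c\tau=0$, to the single point $x_a$).
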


\begin{proof} Notice that the positivity of $x_a$  is an immediate consequence of the normalization \eqref{normalization}, so that we only need to deal with the monotonicity of the solution.

Assume first that $\sigma= 0$ or $c = 0$, so that 
$$
-u'' + cu' = f(u) - u\quad  \text{ in } (-a,a).
$$ 
Then the problem is local and one may proceed as in the seminal work of Berestycki and Nirenberg~\cite{Ber-Nir} to show that the solution is actually increasing on the whole interval $[-a,a]$. We omit the full details but, for the sake of clarity, briefly sketch the argument. First, arguing as in Lemma \ref{lem:estim1}, we see that $0<u<1$ on $(-a,a)$. One can therefore use a sliding method and define
$$
h^* := \min \{h \in [0,2a] \, : \ u(x+\zeta) \geq u(x), \forall x\in[-a,a-\zeta],\;\forall \zeta\in [h,2a] \} \in [0,2a).
$$
Assume by contradiction that $h^*>0$. Since $u(-a+h^*) > u(-a) = 0$ and $u(a)=1 > u(a-h^*)$, one can use the strong maximum principle to get that $u(\cdot + h^*) > u(\cdot)$, which contradicts the definition of $h^*$. Hence $h^*=0$ and $u$ is nondecreasing in $x$. The strict monotonicity is finally obtained by the strong maximum principle.

Now assume that $0<\sigma \leq 1$ and $c >0$. We first claim that $u$ is increasing on the interval $\left[-a,-a+\sigma c\tau \right]$. Indeed, if not, then there exists $x_0\in(-a,-a+\sigma c\tau)$ such that $u'(x_0)=0$ and $u''(x_0)\leq 0$. The travelling wave equation yields $u(x_0)\leq f(u(x_0-\sigma c\tau))=0$, as $x_0-\sigma c\tau \leq -a$ implies $u(x_0-\sigma c\tau)=0$. This contradicts $u>0$ and proves the claim. Hence, we can define
$$
\gamma ^*:=\sup\{\gamma>0:\, u \text{ is increasing on } [-a,-a+\gamma]\} \in [\sigma c\tau,2a ].
$$
Assume by contradiction that
$$
\gamma ^{*}<\min(a+x_a+ \sigma c\tau,2a).
$$
 Then $u'(-a+\gamma ^{*})=0$ and $u''(-a+\gamma ^{*})\leq 0$ so that the travelling wave equation yields $u(-a+\gamma  ^{*})\leq f(u(-a+\gamma ^{*}-\sigma c\tau))$. But $-a\leq -a+\gamma ^*-\sigma c\tau \leq x_a$ and the definition \eqref{def:xa} of $x_a$ implies that $0\leq u( -a+\gamma ^*-\sigma c\tau)\leq \theta$, hence $f(u(-a+\gamma ^{*}-\sigma c\tau))\leq u(-a+\gamma ^{*}-\sigma c\tau)$. As a result $u(-a+\gamma  ^{*})\leq u(-a+\gamma ^{*}-\sigma c\tau)$. Since $\sigma c\tau>0$, this contradicts the definition of $\gamma^{*}$. The lemma is proved.
\end{proof}

We are now in the position to prove a priori estimates on the speed~$c$ for which $P_\sigma (a)$ admits a solution with normalization~\eqref{normalization}.

\begin{lemma}[A priori bounds for speeds]\label{lem:apriori} $(i)$ There exist $c_{max} >0$ and $a_0>0$
such that, for all $a>a_0$, all $0\leq \sigma \leq 1$, any solution
$(c_\sigma ^a,u_\sigma ^a)=(c,u)$ of $P_\sigma(a)$ with normalization 
\eqref{normalization}  satisfies
$$
c \neq 0 \mbox{ and } c < c_{max}.
$$

 $(ii)$ Furthermore and up to enlarging $a_0$, for all $a>a_0$, any solution $(c_0^a,u_0 ^a)=(c,u)$ of the local problem $P_0(a)$ with normalization \eqref{normalization} satisfies
$$
0<c<c_{max}.
$$
\end{lemma}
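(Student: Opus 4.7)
My plan is to combine the monotonicity from Lemma~\ref{lem:estim_mono}, the bound $u<M$ from Lemma~\ref{lem:estim1}, exponential estimates, and, for the local case, a phase-plane argument.

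For the upper bound on $c$, I would work on the monotonicity interval $J:=[-a,\min(x_a+\sigma c\tau,a)]$ supplied by Lemma~\ref{lem:estim_mono}. Throughout $J$ one has $u(\cdot-\sigma c\tau)\leq\theta$ (using the convention $u\equiv 0$ outside $[-a,a]$ when the shifted argument falls below $-a$, and otherwise the monotonicity together with $u(x_a)=\theta$), and combining $f(s)\leq s$ on $[0,\theta]$ with $u\geq\theta$ on $J\cap[x_a,+\infty)$ gives
\begin{equation*}
-u''+cu'\leq 0\quad\text{on }J,
\end{equation*}
so that $x\mapsto e^{-cx}u'(x)$ is nondecreasing on $J$. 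Integrating $u'$ on $[-a,x_a]$ (where $u$ rises from $0$ to $\theta$) and on $[x_a,\min(x_a+\sigma c\tau,a)]$ (where $u\leq M$) and matching the resulting two-sided bounds on $u'(x_a)$ yields
\begin{equation*}
\sigma c^2\tau\leq\log\bigl((2M-\theta)/\theta\bigr)
\end{equation*}
in the regime $x_a+\sigma c\tau\leq a$, and a bound on $c(a-x_a)$ in the opposite regime. A similar integration on $[-a,0]$ using the normalization $u(0)=\theta/2$ controls $cx_a$. These estimates provide a uniform bound on $c$ whenever $\sigma\geq\sigma_0>0$; the small-$\sigma$ range (and the local case $\sigma=0$) is closed by a compactness argument that passes to the limit in the delayed nonlinearity and reduces the problem to the classical Berestycki--Nirenberg bound for the local bistable box.

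For $c\neq 0$ in part (i), I argue by contradiction. If $c=0$ the delayed term disappears and $-u''=f(u)-u$ becomes a Hamiltonian ODE with conserved energy $H(u,u')=\tfrac12(u')^2-F(u)+\tfrac12 u^2$, with $F'=f$. Assumption~\ref{ass:f} gives $H(1,0)>H(0,0)$, so admissible trajectories from $(0,\cdot)$ to $(1,\cdot)$ carry energy $E\geq H(1,0)>0$; the time $\int_0^{\theta/2}du/\sqrt{2(E-F(u)+u^2/2)}$ needed to reach the normalized level $\theta/2$ is then bounded uniformly in such $E$, since the integrand is non-singular on $[0,\theta/2]$. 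Hence the normalization $u(0)=\theta/2$ forces $a$ itself to be bounded, contradicting $a>a_0$ once $a_0$ is large. For part (ii), with $\sigma=0$ the sliding argument invoked in the proof of Lemma~\ref{lem:estim_mono} extends monotonicity to the whole of $[-a,a]$, and multiplying $-u''+cu'=f(u)-u$ by $u'$ and integrating yields
\begin{equation*}
c\int_{-a}^a (u')^2\,dx=\int_0^1 (f(s)-s)\,ds+\tfrac12\bigl((u'(a))^2-(u'(-a))^2\bigr).
\end{equation*}
Linearizing $u$ near the flat states $0$ and $1$ shows that the boundary contribution vanishes as $a\to+\infty$ compared to the positive quantity $\int_0^1(f-s)\,ds$ from Assumption~\ref{ass:f}, forcing $c>0$ for $a_0$ large enough.

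The main obstacle is the uniformity of the upper bound across $\sigma\in[0,1]$, since the estimate $\sigma c^2\tau\leq\log((2M-\theta)/\theta)$ degenerates as $\sigma\to 0$. Bridging it to the local case requires a compactness argument, itself delicate because a hypothetical sequence with $c_n\to+\infty$ does not yield standard $C^1$ bounds on $u_n$; one therefore has to rescale and show that the putative degenerate limit is inconsistent with the bistable structure. A secondary difficulty is the careful control of the boundary terms $(u'(\pm a))^2$ in the energy identity of (ii), handled by linearization near the plateaus $u=0$ and $u=1$.
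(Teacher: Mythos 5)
Your proposal departs from the paper's arguments in all three pieces, and while some of the ideas could be made to work, each currently has a gap worth flagging.

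\textbf{Upper bound on $c$.} You work on the monotonicity interval $J$ and obtain, in the regime $x_a+\sigma c\tau\leq a$, a bound of the form $\sigma c^2\tau\leq\log(M/\theta)$. As you acknowledge, this degenerates as $\sigma\to 0$, and in the complementary regime $x_a+\sigma c\tau>a$ your estimates give $c(a-x_a)\leq\log(M/\theta)$, which also does not by itself bound $c$ uniformly. The ``compactness argument'' you invoke to bridge small $\sigma$ is left entirely unspecified; it is not enough to say one must ``rescale,'' since a sequence with $c_n\to\infty$ breaks the elliptic estimates you would need to pass to the limit, and it is unclear what limiting object would contradict the bistable structure. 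The paper avoids this difficulty completely by a parabolic comparison: taking $K$ with $f(u)\leq Ku$ and $c_{max}>K$, the function $e^{x+c_{max}t}$ is a supersolution of $\partial_t v=\partial_{xx}v+Kv(t-\sigma\tau,\cdot)-v$ for \emph{every} $\sigma\in[0,1]$, and a sliding/first-touching-time argument bounds $c\leq c_{max}$ uniformly. This $\sigma$-uniformity is precisely what your elliptic estimates do not give.

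\textbf{Excluding $c=0$.} With $H(u,u')=\tfrac12(u')^2-F(u)+\tfrac12 u^2$ and $F'=f$, $F(0)=0$, Assumption~\ref{ass:f} gives $H(1,0)=\tfrac12-F(1)=-\int_0^1(f(s)-s)\,ds<0=H(0,0)$, the opposite of what you wrote, and the inequality $E\geq H(1,0)$ is trivially satisfied and therefore useless. The correct lower bound on the conserved energy comes from the barrier at $u=\theta$: since $H(\cdot,0)$ increases on $(0,\theta)$ and $(u')^2=2(E-H(u,0))\geq 0$ along the orbit, any monotone orbit reaching $u=\theta$ must have $E\geq H(\theta,0)=\int_0^\theta(s-f(s))\,ds>0$. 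With \emph{this} threshold the time integral
\begin{equation*}
a=\int_0^{\theta/2}\frac{du}{\sqrt{2\left(E-H(u,0)\right)}}
\end{equation*}
is indeed bounded by $\dfrac{\theta/2}{\sqrt{2\int_{\theta/2}^\theta(s-f(s))\,ds}}$, so the argument closes once the sign errors are repaired. This is a legitimate alternative to the paper, which instead passes to the limit $a\to\infty$ and derives a contradiction from $0=\int_0^{U(+\infty)}(f(s)-s)\,ds$ with $U(+\infty)\in\{\theta,1\}$.

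\textbf{Part (ii).} The energy identity $c\int_{-a}^a(u')^2=\int_0^1(f(s)-s)\,ds+\tfrac12\bigl((u'(a))^2-(u'(-a))^2\bigr)$ is correct, but the claim that the boundary term ``vanishes as $a\to+\infty$ compared to $\int_0^1(f-s)\,ds$'' is left to ``linearization near the plateaus.'' What actually needs to be shown is that $u'(\pm a)$ are exponentially small in $a$; this requires, among other things, the uniform boundedness of $x_a$ established only later in the paper, and a careful linearization argument near both endpoints. In contrast, the paper simply produces a strictly smaller bistable nonlinearity $g<f$ with $\int(g-u)\,du>0$, whose (local, classical) wave $(c_\delta,u_\delta)$ has $c_\delta>0$, and uses a shifted $u_\delta$ as a parabolic subsolution to get $c\geq c_\delta>0$; this avoids any boundary estimate.

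In short: the $c=0$ exclusion works after fixing the signs and replacing $H(1,0)$ by $H(\theta,0)$; the upper bound argument has a real gap at $\sigma\to 0$ that your sketch does not repair; and the part (ii) argument is incomplete without a quantitative control of $(u'(\pm a))^2$.
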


\begin{rem}
Notice that the above lemma does not exclude the possibility that a solution with $c<0$ may exist for $0<\sigma\leq 1$. Nevertheless, excluding $c=0$ will be enough for our purpose. Indeed, we aim at constructing  a travelling wave with positive speed, and in order to apply a topological degree argument we only need to show that, along the homotopy $0\leq \sigma\leq 1$, no solution can escape through $c=0$. See subsection \ref{ss:degree} for more details, and \cite{Alf-Cov-Rao1} or \cite{Gri-Rao} for a similar trick.
\end{rem}

\begin{proof} 
Let us first exclude the case $c=0$ along $0\leq \sigma \leq 1$. Assume by contradiction that there are sequences $a\to \infty$, $0\leq \sigma \leq 1$, and $(c=0,u^a_\sigma)=(c=0,u)$ solving $P_\sigma (a)$ with normalization \eqref{normalization}. In other words, we are equipped with $(c=0,u)$ solving
$$
P(a)\quad \begin{cases}
\,-u''=f(u)-u\quad \text{ in }(-a,a)\vspace{5pt}\\
\,u(-a)=0, \quad u(a)=1,
\end{cases}
$$
and, from Lemma \ref{lem:estim_mono}, with the normalization $
u(0)=\frac \theta 2$. 
Problem $P(a)$ being local, we know (see the proof of Lemma \ref{lem:estim_mono}), that $u$ is actually increasing on the whole interval $[-a,a]$. Thus, letting $a\to \infty$ we end up with $(c=0,U)$ solving  
$$
\quad \begin{cases}
\,-U''=f(U)-U\quad \text{ in }\R\vspace{5pt}\\
\,U(-\infty)=0, \quad U(0)=\frac \theta 2, \quad U(+\infty)\in\{\theta,1\},\quad U'\geq 0.
\end{cases}
$$
Note that the limits $U(\pm \infty)$ above simply follow from the fact that these must be zeros of $f(u) -u$. Next, by a standart argument --- multiply the above equation by $U'$ and integrate over $\R$---  we end up with a contradiction --- since $\int _0 ^\theta (f(u)-u)du<0$ and $\int _0^1 (f(u)-u)du>0$ in view of Assumption~\ref{ass:f}.

\medskip

Let us now turn to the bound from above on $c$ along $0\leq \sigma \leq 1$. In view of Assumption~\ref{ass:f}, we can bound $f$ from above by a linear function. More precisely, there exists $K>0$ such that $f(u) \leq Ku$ for all $u \geq 0$. We now select $c_{max}>K$. It is then straightforward to see that
$$\overline{u} (x) := e^{x}$$
satisfies
$$-\overline{u}'' + c_{max} \overline{u}' > K \overline{u} (\cdot - \sigma c_{max} \tau) - \overline{u}.
$$

Let us now show that $c \leq c_{max}$, where $c$ is such that $P_\sigma (a)$ with normalization~\eqref{normalization} admits a solution. Proceed by contradiction and assume that $c > c_{max}$. Since $\overline{u} (+\infty) = +\infty$, one can select a large shift $X = X^a> > 1$ so that\begin{equation}
\label{shift}
\overline{u} (-a + X)>M.
\end{equation}
 Going back to the parabolic problem, we define
$$
\overline{v} (t,x) := \overline{u} (x+c_{max} t+ X), \quad (t,x)\in \R^{2},
$$
which satisfies
\begin{equation}
\label{eq-vplus}
\partial _t \overline{v} (t,x) > \partial _{xx} \overline{v} (t,x)+  K \overline{v} (t-\sigma \tau,x) - \overline{v}(t,x),\quad \text{ for } (t,x)\in \R ^{2}.
\end{equation}
We also define
$$
v(t,x):=u(x+ct),\quad (t,x)\in \R^{2},
$$
which satisfies
\begin{eqnarray}
\partial _t v(t,x)&=&\partial _{xx}v(t,x)+f(v(t-\sigma \tau,x))-v(t,x)\nonumber\\
& \leq &\partial _{xx}v(t,x)+ Kv(t-\sigma \tau,x) -v(t,x),\quad \text{ for } \vert x+ct\vert <a.\label{ineq-v}
\end{eqnarray}
Recall that $v$ is well-defined on the whole domain as $u$ is extended by constants outside of the interval $(-a,a)$.

We claim that
\begin{equation}
\label{claim1}
V(t,x):=v(t,x)- \overline{v} (t,x)< 0, \quad -\sigma\tau \leq t\leq 0,\; x\in \R.
\end{equation}
Indeed if $x+ct<-a$ then $v(t,x)=0$ so the inequality is clear. On the other hand 
if $x+ct\geq -a$ then the monotony of $\overline{u}$ (notice that $(c_{max} -c)t\geq 0$ when $-\sigma \tau \leq t\leq 0$) and~\eqref{shift} imply $\overline{v}(t,x)>M$, which proves \eqref{claim1}.

Next, since $c>c_{max}$, we can define the first touching time, namely
$$
t_0:=\inf \{t>0 \, : \  \exists x\in \R, \, V(t,x)>0\}\geq  0.
$$
In particular $V(t_0,\cdot)\leq 0$ on $\R$, and there is $x_0\in \R$ such that $V(t_0,x_0)=0$. Clearly the touching point $(t_0,x_0)$ is such that $x_0+ct_0>-a$. If $(t_0,x_0)$ is such that $\vert x_0+ct_0\vert <a$ (i.e. in the region where $v$ can be differentiated) then $\partial _t V(t_0,x_0)\geq 0$ and $\partial _{xx}V(t_0,x_0)\leq 0$. Then, subtracting \eqref{eq-vplus} from \eqref{ineq-v} and evaluating at point $(t_0,x_0)$ leads to a contradiction. If $(t_0,x_0)$ is such that $x_0+ct_0>a$ (i.e. in the region where $v\equiv 1$) then $\partial _t V(t_0,x_0)=-\partial _t \overline{v} (t_0,x_0)<0$ which contradicts the definition of $t_0$. It remains to exclude the case $x_0+ct_0=a$. In this case we have $1= v(t_0,x_0)= \overline{v} (t_0,x_0)= \overline{u} (x_0+c_{max} t_0+X)$ so that 
\begin{equation}
\label{moinsb}
\overline{u} (z) \leq \frac \theta 4, \quad \forall z\leq x_0+c_{max} + X + \ln \left(\frac \theta 4 \right).
\end{equation}
But $v(t_0,x)\leq \overline{v} (t_0,x)$ may be rewritten as $u(z)\leq \overline{u} (z+(c_{max}-c)t_0+X)= \overline{u} (z+x_0+c_{max} t_0+X - a)$. For any $a > - \ln \left( \frac \theta 4 \right)$, we deduce from \eqref{moinsb} that
$$
u(z)\leq \frac\theta 4, \quad \forall z\leq 0.
$$
This contradicts the normalization \eqref{normalization}, and thus we have $c\leq c_{max}$. This concludes the proof of item $(i)$ of the lemma.

\medskip

Last, we turn to the bound from below in $(ii)$, which is only concerned with the local case $\sigma =0$. We  select $\delta>0$ small enough and $g$ a bistable type function with $-\delta$, $\theta +\delta$ and $1-\delta$ its fixed points, such that $g<f$ and 
$$
\int _{-\delta}^{1-\delta} (g(u)-u)du>0.
$$ 
It is well-known that there exists a bistable travelling wave $(c_\delta>0,u_\delta)$ for the nonlinearity $g$, i.e. solving
$$- u_\delta '' + c_\delta u_\delta ' = g (u_\delta) - u_\delta,$$
with $u_\delta (-\infty) =-\delta < u_\delta (\cdot) < u_\delta (+\infty) = 1-\delta$. Moreover, $u_\delta$ is an increasing function of $x \in \mathbb{R}$. Reproducing the above proof --- that is using a shifted $u_\delta$ as a subsolution of the parabolic problem--- we find that $0<c_\delta\leq c$ for all $a$ large enough, which concludes the proof of $(ii)$ and thus of the lemma.
\end{proof}

Next we prove some technical estimates on the behaviour of $u$ on the left of $x_a$. We recall that, as defined in~\eqref{def:xa}, $x_a$ is the leftmost (and, as it will turn out in Lemma \ref{lem:theta}, unique) point where $u$ takes the value $\theta$.

\begin{lemma}[A priori control on the left of $x_a$ and of the slope at $x_a$]\label{lem:apriorileft2}  For all $a>0$, all $0\leq \sigma \leq 1$, any solution
$(c_\sigma ^a,u_\sigma ^a)=(c,u)$ of $P_\sigma(a)$ with $c\geq 0$ and normalization \eqref{normalization}
satisfies 
\begin{eqnarray}\label{controlleft2}
\psi(x)&:=&\theta\frac{e^{\frac{c+\sqrt{c^2+4}}{2}x}e^{\sqrt{c^2+4}(a+\frac{x_a}{2})}-e^{\frac{c-\sqrt{c^2+4}}{2}x}e^{\sqrt{c^2+4}\frac{x_a}{2}}}{e^{\frac c 2 x_a}(e^{\sqrt{c^{2}+4}(a+x_a)}-1)}\nonumber \vspace{5pt}\\
&& \leq u(x)\leq \theta e^{c(x-x_a)}=:\varphi(x),
\end{eqnarray}
for all  $x\in [-a,x_a]$. This in turn implies the following: there is $s_2>0$ such that for all $a>0$, all $0\leq \sigma \leq 1$, any solution
$(c_\sigma ^a,u_\sigma ^a)=(c,u)$ of $P_\sigma(a)$ with $c\geq 0$ and normalization \eqref{normalization}
satisfies
\begin{equation}\label{controlslope2}
s_2\geq u'(x_a)\geq \theta c \geq 0.
\end{equation}
\end{lemma}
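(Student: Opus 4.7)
The plan is to exploit the monotonicity of $u$ on $[-a,x_a]$ --- guaranteed by Lemma~\ref{lem:estim_mono}, since $c\geq 0$ and the normalization forces $x_a>0$ --- in order to eliminate the delay from the equation on that interval and then reduce the estimates to barrier comparison. On $[-a,x_a]$, the monotonicity yields $0\leq u(x-\sigma c\tau)\leq u(x)\leq \theta$ (the case $x-\sigma c\tau<-a$ being absorbed by the extension by $0$). Since $f(v)\leq v$ on $[0,\theta]$ by Assumption~\ref{ass:f}, the travelling wave equation gives at once
$$
-u''+cu'\leq 0\qquad\text{and}\qquad -u''+cu'+u\geq 0\quad\text{on }[-a,x_a],
$$
and these two linear inequalities are the only ingredients needed for the bounds.

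For the upper bound, I would compare $u$ with $\varphi(x)=\theta e^{c(x-x_a)}$, which solves $-\varphi''+c\varphi'=0$ with $\varphi(x_a)=\theta$ and $\varphi(-a)\geq 0=u(-a)$; the weak maximum principle for the (non-coercive) operator $-\partial_{xx}+c\partial_x$ applied to $u-\varphi$ then yields $u\leq\varphi$. For the lower bound, I would introduce the unique $\psi$ solving $-\psi''+c\psi'+\psi=0$ on $[-a,x_a]$ with $\psi(-a)=0$, $\psi(x_a)=\theta$. The characteristic roots being $r_\pm=(c\pm\sqrt{c^2+4})/2$, the ansatz $\psi=Ae^{r_+x}+Be^{r_-x}$ determines $A,B$ from the boundary conditions and, after rearrangement, produces precisely the closed form written in the statement. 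Because $-\partial_{xx}+c\partial_x+1$ is coercive and $u-\psi$ satisfies the corresponding inequality with vanishing boundary values, its maximum principle delivers $u\geq \psi$, completing \eqref{controlleft2}.

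For the slope inequalities \eqref{controlslope2}, I would invoke a Hopf-type argument at the right endpoint $x_a$: since $u\leq\varphi$ with equality at $x_a$, one has $(u-\varphi)'(x_a)\geq 0$, hence $u'(x_a)\geq \varphi'(x_a)=\theta c$; symmetrically, $u'(x_a)\leq \psi'(x_a)$. A direct differentiation of the explicit $\psi$ gives
$$
\psi'(x_a)=\theta\left[r_+ +\frac{\sqrt{c^2+4}}{e^{\sqrt{c^2+4}(a+x_a)}-1}\right],
$$
which is bounded above by some constant $s_2>0$ independent of $a,\sigma$ thanks to the a priori bound $0\leq c\leq c_{max}$ from Lemma~\ref{lem:apriori}, together with the fact that $a+x_a$ is bounded away from $0$ in the regime $a\geq a_0$ used later.

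The main obstacle is really the bookkeeping of the closed form of $\psi$ and the verification that it coincides with the expression in the lemma. A subtler point is that the uniformity of $s_2$ requires the lower bound on $a+x_a$, which is harmless since the whole Leray--Schauder construction is carried out only for $a\geq a_0$.
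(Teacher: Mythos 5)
Your proposal is correct and follows essentially the same route as the paper: reduce, via the monotonicity from Lemma~\ref{lem:estim_mono} and $f(v)\le v$ on $[0,\theta]$ (resp.\ $f\ge 0$), to the two linear inequalities $-u''+cu'\le 0$ and $-u''+cu'+u\ge 0$, compare with the explicit barriers $\varphi$ and $\psi$, and read off the slope bounds at $x_a$. Your closed form for $\psi'(x_a)$ is an equivalent rewriting of the one in the paper, and you correctly flag that the uniformity of $s_2$ uses $a\ge a_0$.
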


\begin{proof} First, $u$ satisfies $-u''+cu'+u\geq 0$ on $(-a,x_a)$ with the boundary conditions $u(-a)=0$, $u(x_a)=\theta$. Since 
$\psi$ satisfies $-\psi ''+c\psi '+\psi =0$ on $(-a,x_a)$ and $\psi(-a)=0$, $\psi(x_a)=\theta$, the estimate from below in \eqref{controlleft2} follows from the comparison principle.

{}From $c\geq 0$ and the monotonicity Lemma~\ref{lem:estim_mono}, we know that $u(x-c\sigma \tau)\leq u(x)\leq \theta=u(x_a)$ for all $x\in [-a,x_a]$, which in turn implies $f(u(x-c\sigma\tau))-u(x)\leq u(x-c\sigma \tau)-u(x)\leq 0$. Hence $u$ satisfies $-u''+cu'\leq 0$ on $(-a,x_a)$ with the boundary conditions $u(-a)=0$, $u(x_a)=\theta$, so that the estimate from above in \eqref{controlleft2} follows from the comparison principle.

Last, \eqref{controlleft2} enforces $\psi'(x_a)\geq u'(x_a)\geq \varphi'(x_a)$. Since
$$
\psi'(x_a)= \theta  \left(\frac c 2+\frac{\sqrt{c^2+4}}{2}\frac{e^{(a+x_a)\sqrt{c^2+4}}+1}{e^{(a+x_a)\sqrt{c^2+4}}-1}\right)\leq \theta  \frac {c_{max}+\sqrt{c_{max}^2+4}\sup_{X\geq 2 a_0} \frac{e^{X}+1}{e^{X}-1}}{2}=:s_2,
$$
and $\varphi'(x_a)=\theta c$, estimate \eqref{controlslope2} is proved.
\end{proof}

\subsection{Construction in the box}\label{ss:degree}

Equipped with the above {\it a priori} estimates, we now use a
Leray-Schauder topological degree argument (for related arguments see e.g.
\cite{Ber-Nic-Sch}, \cite{Ber-Nad-Per-Ryz} or \cite{Alf-Cov-Rao1}  in a KPP context, \cite{Alf-Cov-Rao2} in a bistable context) to construct a solution $(c,u)$ to $P_1(a)$ with normalization \eqref{normalization}.

\begin{proposition}[A solution in the
box]\label{prop:sol-boite} There exist $C_0 >0$ and $a_0>0$ such
that, for all $a\geq a_0$, problem $P_1(a)$ admits a solution
$(c^a,u^a)$ with normalization \eqref{normalization},
which is such that
$\Vert u^a \Vert _{C^2(-a,a)}\leq C_0$. 
\end{proposition}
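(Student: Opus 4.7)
The plan is to realize the solutions of $P_\sigma(a)$ satisfying \eqref{normalization} as fixed points of a compact operator $T_\sigma$ on a suitable Banach space, and to apply the Leray--Schauder topological degree, using homotopy invariance along $\sigma\in[0,1]$ to transfer the degree computation at $\sigma=0$ (where the problem becomes local and tractable) to the genuinely nonlocal problem $P_1(a)$.

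First I set up the fixed-point formulation. Let $E:=\R\times C^1([-a,a])$. Given $(c,v)\in E$, extend $v$ by $0$ on $(-\infty,-a)$ and by $1$ on $(a,\infty)$, and let $u=u(c,v,\sigma)$ denote the unique solution of the linear Dirichlet problem
$$
-u''+cu'+u=f(\bar v(\cdot-\sigma c\tau))\text{ in }(-a,a),\quad u(-a)=0,\;u(a)=1.
$$
Define $T_\sigma:E\to E$ by
$$
T_\sigma(c,v):=\left(c+\max_{[-a,0]}u-\frac{\theta}{2},\; u\right).
$$
By construction, the fixed points of $T_\sigma$ are exactly the solutions of $P_\sigma(a)$ also satisfying \eqref{normalization}. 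Elliptic $L^p$ and Schauder estimates, combined with the compact embedding $C^{2,\gamma'}\hookrightarrow C^1$, make $T_\sigma$ continuous and compact on bounded subsets of $E$, jointly continuous in $\sigma\in[0,1]$.

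Next I select the open bounded set. By Lemmas~\ref{lem:estim1}--\ref{lem:apriori} and interior/boundary Schauder estimates, for $a\geq a_0$ every fixed point of $T_\sigma$ satisfies $0<u<M$, $c<c_{max}$, $c\neq 0$ and $\|u\|_{C^2(-a,a)}\leq C_0$, the constant $C_0$ being independent of $a$ and $\sigma$. A standard compactness argument then upgrades the exclusion $c\neq 0$ into a uniform lower bound $c\geq c_{min}>0$ for fixed points with positive speed: any sequence of positive speeds $c_n\searrow 0$ supporting fixed points would, by compactness of $T_\sigma$ on the $C^1$-bounded region, accumulate to a fixed point with $c=0$, contradicting Lemma~\ref{lem:apriori}. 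Hence the open bounded set
$$
\mathcal{O}:=\left\{(c,v)\in E : \frac{c_{min}}{2}<c<c_{max}+1,\;\|v\|_{C^1}<C_0+1\right\}
$$
contains no fixed point of $T_\sigma$ on its boundary for any $\sigma\in[0,1]$, so that $\deg(I-T_\sigma,\mathcal{O},0)$ is well-defined and constant in $\sigma$ by homotopy invariance.

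It remains to show $\deg(I-T_0,\mathcal{O},0)\neq 0$. For $\sigma=0$ the problem is the purely local bistable equation $-u''+cu'=f(u)-u$ with Dirichlet data and normalization \eqref{normalization}, handled as in Berestycki--Nirenberg~\cite{Ber-Nir}: sliding yields strict monotonicity of any solution and a classical uniqueness argument shows that $T_0$ has a unique fixed point in $\mathcal{O}$. Either by linearising at this fixed point --- the normalization killing the one-dimensional kernel otherwise coming from the scalar unknown $c$ --- or via a secondary homotopy to a canonical bistable nonlinearity whose travelling wave is explicitly known, one concludes $\deg(I-T_0,\mathcal{O},0)=\pm 1$. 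Homotopy invariance then gives $\deg(I-T_1,\mathcal{O},0)\neq 0$, hence a fixed point $(c^a,u^a)\in\mathcal{O}$, which is the desired solution of $P_1(a)$ obeying \eqref{normalization}, with the uniform $C^2$ bound built into $\mathcal{O}$. The most delicate step is this final degree computation: the functional $\max_{[-a,0]}u$ is only Lipschitz in $u$, so a direct linearisation requires either regularising the max into a smooth surrogate or going through the secondary homotopy to a caricatural profile where the sign of the Jacobian is transparent.
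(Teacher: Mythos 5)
Your set-up — fixed points of a compact operator on $\R\times C^1$, the homotopy in $\sigma$, and the use of the a priori bounds to build an admissible open set — matches the paper's strategy in outline. Your compactness argument upgrading $c\neq 0$ to a quantitative lower bound $c\geq c_{\min}>0$ (valid for fixed $a$) is a legitimate alternative to the paper's trick, which instead keeps the boundary at $c=0$ for the full homotopy in $\sigma$ and only enlarges the speed window to $(-c_{\min}(a),c_{\max})$ for the local $\sigma=0$ stage, relying on Lemma~\ref{lem:apriori}$(ii)$ to justify the enlargement. Either choice of $\mathcal O$ works.

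The genuine gap is your final degree computation. Knowing that $T_0$ has a unique fixed point in $\mathcal O$ (even granting the Berestycki--Nirenberg sliding and uniqueness argument) does \emph{not} by itself give $\deg(I-T_0,\mathcal O,0)\neq 0$: the local index at a unique but degenerate fixed point can be zero. You yourself flag the obstruction --- the normalization functional $v\mapsto\max_{[-a,0]}v$ is not Fr\'echet differentiable, so you cannot simply linearize --- and you then gesture at two possible repairs (regularizing the max, or homotoping to an explicit caricature) without carrying out either. That is precisely the step the paper handles concretely: it performs a further chain of homotopies, first removing the nonlinearity by deforming $f(v(\cdot-\sigma c\tau))-v$ to $\sigma(f(v)-v)$ and then to $0$ (where the profile $\widetilde u_0^c$ is explicit), next deforming the normalization $\max_{[-a,0]}v$ to the pointwise evaluation $\widetilde u_0^c(0)$, and finally freezing the profile component; the endpoint is a product operator $(c,v)\mapsto(\widetilde u_0^c(0)-\tfrac\theta2,\,v-\widetilde u_{c_0})$ whose degree $-1$ is read off directly from the strict monotonicity of $c\mapsto\widetilde u_0^c(0)$. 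Without an argument of this kind (or a rigorous regularization-and-index-count), the conclusion $\deg(I-T_0,\mathcal O,0)\neq 0$ is asserted, not proved, and the whole existence claim rests on it.

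One further minor caution: your compactness argument should also be run uniformly in $\sigma\in[0,1]$ (take a subsequence $\sigma_n\to\sigma_\infty$ as well), since you need the lower bound $c_{\min}$ for the entire homotopy, not only at $\sigma=1$. This is easy but should be said.
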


\begin{proof} For a given $c\in \R$ and a given nonnegative function $v$ defined on $(-a,a)$, consider the
family $0\leq\sigma\leq1$ of linear problems
\begin{equation}\label{droite-gelee}
P_\sigma^c(a) \;\begin{cases}\, -u''+cu'=f(\bar v(\cdot -\sigma c\tau))-v
\quad &\text{ in } (-a,a)\vspace{3pt} \\
u(-a)=0,\quad u(a)=1,
\end{cases}
\end{equation}
where, as before, $\bar v$ denotes the extension of $v$ by 0 on $(-\infty,-a]$ and by 1 on $[a,\infty)$. Denote by $\mathcal K _\sigma$ the mapping of the Banach space
$X:=\R \times \{v\in C^{1,\alpha}([-a,a]): v(-a)=0\}$, equipped with the norm $\Vert
(c,v)\Vert_X:=\max\left(|c|,\Vert v\Vert
_{C^{1,\alpha}}\right)$, onto itself defined by
$$
\mathcal K _\sigma:(c,v)\mapsto \left( \frac{\theta}{2} -\max_{-a\leq x\leq 0}v(x)+c,u_\sigma
^c:=\text{ the solution of } P_\sigma ^c(a)\right).
$$
Constructing a solution $(c,u)$ of $P_1(a)$ with normalization \eqref{normalization} is equivalent to
showing that the kernel of $\text{Id} -\mathcal K _1$ is
nontrivial. The operator $\mathcal K _\sigma$ is compact and depends
continuously on the parameter $0\leq \sigma \leq 1$. Thus the
Leray-Schauder topological argument can be applied. 

 From Lemma \ref{lem:estim1} and the interior elliptic estimates there is $C_M>0$ such that, for any $0\leq \sigma \leq 1$, any solution $(c,u)$ of $P_\sigma(a)$ with $0\leq c\leq c_{max}$ has to satisfy $\Vert u\Vert_{C^{1,\alpha}}<C_M$. Let us define the
open set
$$
S:=\left\{(c,v):\, 0 < c< c_{max},\;v>0 \text{ on } (-a,a],\; v'(-a)>0,\;
\Vert v \Vert _{C^{1,\alpha}}< C_M\right\}\subset X.
$$
It follows from the above
and Lemma~\ref{lem:apriori} $(i)$
 that, for any $a\geq a_0$, any $0\leq \sigma \leq 1$, any fixed point of $\mathcal K _\sigma$ satisfies $u>0$ on $(-a,a]$, $u'(-a)>0$ by the Hopf lemma, $c\neq 0$ and $c<c_{max}$, and therefore the operator
$\text{Id} -\mathcal K _\sigma$ cannot vanish on the boundary
$\partial S$. By the homotopy invariance of the degree we thus
have $\text{deg}(\text{Id}-\mathcal
K_1,S,0)=\text{deg}(\text{Id}-\mathcal K_0,S,0)$. Now, as far as $\mathcal K _0$ is concerned, we know from Lemma \ref{lem:apriori} $(ii)$ that $c\leq 0$ is impossible for a fixed point. We can therefore enlarge the speed interval in $S$ without changing the degree, that is $\text{deg}(\text{Id}-\mathcal K_0,S,0)=\text{deg}(\text{Id}-\mathcal K_0,S(a),0)$, where
$$
S(a):=\left\{(c,v):\, -c_{min}(a) < c< c_{max},\;v>0,\;
\Vert v \Vert _{C^{1,\alpha}}< C_M\right\}\subset X,
$$
and $c_{min}(a)\geq 0$ is to be selected below.

Roughly speaking, the role of \eqref{droite-gelee} was to get rid of the nonlocal term. Now, in order to get rid of the nonlinearity, we consider the family $0\leq \sigma\leq 1$ of local problems
\begin{equation}\label{droite-gelee2}
\widetilde P_\sigma^c(a) \;\begin{cases}\, -u''+cu'=\sigma(f(v)-v)
\quad &\text{ in } (-a,a)\vspace{3pt} \\
u(-a)=0,\quad u(a)=1,
\end{cases}
\end{equation}
and let $\widetilde{\mathcal K}_\sigma$ be the associated solution operator, namely
$$
\widetilde{\mathcal K} _\sigma:(c,v)\mapsto \left(\frac \theta 2-\max_{-a\leq x\leq 0}v(x)+c,\widetilde u_\sigma
^c:=\text{ the solution of } \widetilde P_\sigma ^c(a)\right).
$$
It is clear that, for any $0\leq\sigma \leq 1$, a fixed point $(c,u)$ of $\widetilde{\mathcal K}_\sigma$ satisfies $0<u<M$ in $(-a,a)$ (reproduce Lemma \ref{lem:estim1}). Furthermore, one can also proceed as in the proof of Lemma~\ref{lem:apriori} to check that the inequality $c< c_{max}$ still holds. Indeed, the same argument applies noting that the exponential function $e^{x + c_{max} t}$ is a supersolution of the parabolic equation
$$\partial_t u = \partial_{xx} u + \sigma \left( f(u)-u \right),$$
associated with $\widetilde P_\sigma^c (a)$, provided as before that $c_{max}>K$ where $f(u) \leq K u$ for all $u \geq0$. Also, any solution of $\widetilde P_\sigma^c (a)$ satisfies $-u''+cu'\geq -\sigma u\geq -u$ so that, by the comparison principle, $u\geq w$ where $w$ solves
$$
-w''+cw'+w=0 \quad \text{ on } (-a,a), \quad w(-a)=0, \quad w(a)=1.
$$ 
After explicitly computing $w$ we see that, for any $a\geq a_0$, $w(0)\to 1$ as $c\to -\infty$ so that there is $c_{min}(a)\geq 0$ such that, in order not to miss the normalization, one needs $-c_{min}(a)<c$. Then the operator 
$\text{Id} -\widetilde{\mathcal K} _\sigma$ cannot vanish on the boundary
$\partial S(a)$. Hence, by the homotopy invariance of the degree and the fact that $\mathcal K _0=\widetilde {\mathcal K }_1$, we have $\text{deg}(\text{Id}-\mathcal K_0,S(a),0)=\text{deg}(\text{Id}-\widetilde{\mathcal K}_0,S(a),0)$.

Before  computing
$\text{deg}(\text{Id}-{\widetilde {\mathcal K}}_0,S(a),0)$
by using two additional homotopies, let us observe that the solution of $\widetilde P _0 ^c(a)$  is given by
\begin{equation}\label{tauzero}
\widetilde u_0^c(x)=\displaystyle \frac{e^{cx}-e^{-ca}}{e^{ca}-e^{-ca}}
\quad\text{ if } c\neq 0,\quad \widetilde u_0^c(x)=\frac {x+a}{2a}
\quad\text{ if } c=0.
\end{equation}
In particular, $\widetilde u_0^{c}(0)$ is decreasing with respect to $c$ and, there is a unique $c_0$ such that
$\widetilde u_0^{c_0}(0)=\frac \theta 2$. As a result, the operator $\widetilde {\mathcal K} _0$ has a unique fixed point $(c_0,\widetilde u_{c_0})$ which moreover belongs to $S(a)$.

Let us now perform two additional homotopies. First, consider, for $0\leq
\sigma \leq 1$,
$$
\mathcal G _\sigma:(c,v)\mapsto \left(\frac \theta 2-(1-\sigma)\max_{-a\leq x\leq 0}v(x)-\sigma \widetilde u_0^c(0)+c,\widetilde u_0^c:=\text{ the solution of } \widetilde P_0 ^c(a)\right).
$$
Again, $\text{Id} -\mathcal G _\sigma$ does not vanish on the
boundary $\partial  S(a)$. By the homotopy invariance of the degree and the fact that $\widetilde{ \mathcal K} _0= {\mathcal G }_0$, we have $\text{deg}(\text{Id}-\widetilde{\mathcal K}_0,S(a),0)=\text{deg}(\text{Id}-\mathcal G _1,S(a),0)$. Then, consider, for $0\leq \sigma \leq
1$,
$$
\mathcal H _\sigma:(c,v)\mapsto \left(\frac \theta 2- \widetilde u_0^c(0)+c,\sigma
\widetilde u_0^c+(1-\sigma) \widetilde u _{c_0}\right),
$$
where $(c_0,\widetilde u_{c_0})$ was defined in the previous paragraph. If $\mathcal H _\sigma (c,v)=(c,v)$ for some $(c,v)\in\partial
 S(a)$, then it follows from the previous paragraph  that $(c,\widetilde u_0^{c})\equiv (c_0,\widetilde u_{c_0})$ and therefore $(c,v)\equiv (c_0,\widetilde u_{c_0})$  so that
$(c,v)\in
\partial  S(a)$ solves the local problem $ \widetilde P_0^{c}(a)$ and satisfies normalization \eqref{normalization}, which cannot be. Therefore $\text{Id} -\mathcal H
_\sigma$ does not vanish on the boundary $\partial  S(a)$. Since
$\mathcal H _1=\mathcal G _1$ we have
$\text{deg}(\text{Id}-\mathcal G _1,
S(a),0)=\text{deg}(\text{Id}-\mathcal H_0 ,S(a),0)$, where
$$
\text{Id}-\mathcal H _0:(c,v)\mapsto \left(
\widetilde u_0^c(0)-\frac \theta 2,v-\widetilde u _{c_0}\right).
$$
As seen above, $\widetilde u_0^c(0)$ is strictly decreasing in $c$ so the degree of the
first component of the above operator is $-1$. Clearly the degree
of the second one is 1. Hence $\text{deg}(\text{Id}-\mathcal
H_0,S(a),0)=-1$
 so that $\text{deg}(\text{Id}-\mathcal
K_1,S,0)=-1$ and there is a solution
 $(c^a,u^a)\in S$ of $P_1(a)$ with normalization~\eqref{normalization}. Together with standard estimates, this concludes the proof of
the proposition. \end{proof}

In the sequel, in order to complete the construction of a bistable travelling wave as stated in Theorem \ref{th:construction}, we actually need to  strengthen the above result. More precisely, we show below the existence of a solution that crosses the value $\theta$ only once.

\begin{proposition}[A solution such that $\theta$ is attained only at $x=x_a$]\label{lem:theta}
 Let $C_0 >0$ and $a_0>0$ be as in Proposition \ref{prop:sol-boite}. Then, for all $a\geq a_0$, problem $P_1(a)$  admits a solution
$(c^a,u^a)$ with normalization \eqref{normalization},
which is such that
$\Vert u^a \Vert _{C^2(-a,a)}\leq C_0$ and 
$$
u^a (x)=\theta \; \text{ if and only if }\;  x=x_a,
$$
where $x_a>0$ was defined in \eqref{def:xa}
\end{proposition}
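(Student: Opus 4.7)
The approach is to refine the Leray-Schauder argument of Proposition~\ref{prop:sol-boite} by running the same chain of homotopies on the smaller open set
\[
S' := \bigl\{ (c, v) \in S : v > \theta \text{ on } (x_v, a] \bigr\},
\]
where $x_v := \min\{ x \in [-a, a] : v(x) = \theta \}$. By Lemma~\ref{lem:estim_mono}, any candidate $(c,v)$ is already strictly increasing on $[-a, x_v + c\tau]$, so that belonging to $S'$ amounts to ruling out further $\theta$-crossings beyond $x_v + c\tau$. At $\sigma = 0$, and along the auxiliary local homotopies $\widetilde P_\sigma^c$, $\mathcal G_\sigma$, $\mathcal H_\sigma$ used in the proof of Proposition~\ref{prop:sol-boite}, the problem is local and the Berestycki-Nirenberg sliding argument (reproduced in the proof of Lemma~\ref{lem:estim_mono}) yields solutions that are strictly increasing on all of $[-a, a]$, hence automatically in $S'$. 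The chain of degree computations from Proposition~\ref{prop:sol-boite} will therefore transfer verbatim to $S'$ once we have ruled out fixed points on the additional piece of boundary $\partial S' \setminus \partial S$ along the only genuinely nonlocal homotopy $P_\sigma(a)$, $0 < \sigma \leq 1$.

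A point in $\partial S' \setminus \partial S$ corresponds to a solution $(c, u) \in S$ with $u \geq \theta$ on $(x_u, a]$ and $u(y) = \theta$ at some $y \in (x_u, a]$. Since $u(a) = 1 \neq \theta$, $y$ is interior, and $u \geq \theta$ on both sides of $y$ forces $y$ to be a local minimum, so
\[
u'(y) = 0 \quad \text{and} \quad u''(y) \geq 0.
\]
By Lemma~\ref{lem:estim_mono} necessarily $y > x_u + \sigma c\tau$, so that $y - \sigma c\tau \in (x_u, y)$ and hence $u(y - \sigma c\tau) \in (\theta, M]$. Assumption~\ref{ass:f} then gives $f(u(y - \sigma c\tau)) > \theta = u(y)$, and evaluating the travelling wave equation at $y$ yields
\[
u''(y) \;=\; c\, u'(y) - \bigl( f(u(y - \sigma c\tau)) - \theta \bigr) \;<\; 0,
\]
a contradiction.

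The main obstacle I expect is precisely this boundary exclusion: identifying the correct refined open set $S'$, verifying that any boundary fixed point is forced into the favourable configuration $u \geq \theta$ on both sides of $y$ (which requires $y$ to be the \emph{first} contact point), and checking the sign arithmetic at $y$. Once this is established, the homotopy invariance of the degree applies on $S'$, the computations of Proposition~\ref{prop:sol-boite} give $\mathrm{deg}(\mathrm{Id} - \mathcal K_1, S', 0) = -1$, and a fixed point $(c^a, u^a) \in S'$ of $\mathrm{Id} - \mathcal K_1$ is produced. It is a solution of $P_1(a)$ with normalization~\eqref{normalization}, with $u^a = \theta$ only at $x_a$, and it inherits the $C^2$-bound from Proposition~\ref{prop:sol-boite}, concluding the proof.
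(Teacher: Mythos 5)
Your overall strategy is the same as the paper's: run the Leray--Schauder homotopies on a smaller open set inside $S$ on which $\theta$ is attained only once, check nothing escapes through the new piece of boundary by testing the equation at a local minimum $y$ with $y-\sigma c\tau>x_a$ (forced by Lemma~\ref{lem:estim_mono}), and transfer the degree $-1$. The core boundary-testing computation is correct and matches the paper's.

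However, there is a genuine gap in the definition of the refined set. As you define it, $S':=\{(c,v)\in S: v>\theta \text{ on } (x_v,a]\}$ is \emph{not open} in $X$: a function $v\in S'$ with $v'(x_v)=0$ (tangential first crossing, e.g.\ $v(x)-\theta\sim(x-x_v)^3$ near $x_v$) admits arbitrarily small $C^{1,\alpha}$ perturbations that dip below $\theta$ just to the right of the perturbed first crossing, so such a $(c,v)$ lies in $S'$ but is not an interior point. Consequently the Leray--Schauder degree on $S'$ is not defined and the homotopy invariance you invoke cannot be applied. The paper avoids this by building the nondegeneracy $v'(x_1)>0$ at the $\theta$-point into the definition of the set $S_\theta$, which makes it open. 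Once you do this, the boundary $\partial S_\theta\setminus\partial S$ contains, in addition to the ``second crossing $y>x_a$'' configuration you handle, the configuration where $u'(x_a)=0$; your argument does not address that case at all. The paper rules it out via the a priori slope bound \eqref{controlslope2} from Lemma~\ref{lem:apriorileft2}, namely $u'(x_a)\geq\theta c>0$ on $S$, which you never invoke. With these two repairs (adding $v'(x_v)>0$ to the definition of $S'$, and using \eqref{controlslope2} to exclude $u'(x_a)=0$ on the boundary), your proof would close and would coincide with the paper's.
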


\begin{proof} Let $a\geq a_0$ be given. We again work along the homotopy $0\leq \sigma\leq 1$. From the proof of Proposition~\ref{prop:sol-boite}, we know that there is a
solution $(c_\sigma,u_\sigma)$ of
\begin{equation}\label{twsigma}
\left\{\begin{array}{l}
-u_\sigma''+c_\sigma u_\sigma'=f(u_\sigma(\cdot-\sigma c_\sigma \tau))-u_\sigma \quad \text{ in }(-a,a)\vspace{3pt}\\
 u_\sigma(-a)=0,\quad u_\sigma(0)=\frac \theta 2,\quad  u_\sigma(a)=1.
  \end{array}
\right.
\end{equation}
For the local case $\sigma =0$, we also know from the proof of Lemma~\ref{lem:estim_mono} that the (unique) solution
$u_0$ is increasing and therefore satisfies $u_0(x)=\theta$ if and only if $x=x_a$. 

In particular, using also~\eqref{controlslope2} and the fact that $c_0 >0$, the function $u_0$ belongs to the open set
$$S_\theta := S \cap \{ (c,v) \, : \ v (x_1) = v(x_2) = \theta \Rightarrow (v ' (x_1) > 0 \mbox{ and } x_1 = x_2) \} \subset X,$$
where $S$ and $X$ were defined in the proof of Proposition~\ref{prop:sol-boite}. Let us prove that $P_\sigma (a)$ together with \eqref{normalization} does not admit a solution on the boundary of $S_\theta$ (with respect to the topology of the Banach space $X$ as before). As we already dealt with the boundary of $S$, we only need to show here that, for any $\sigma \in (0,1]$, there is no solution $u_\sigma$ such that $u_\sigma \geq \theta$ on $[x_a,a]$, and either 
\begin{equation}\label{exclude1}
u_\sigma ' (x_a) = 0
\end{equation}
or there exists $x^* > x_a$ with 
\begin{equation}\label{exclude2}
u_\sigma (x^*) = \theta .
\end{equation}
Using again \eqref{controlslope2}, we have $u_\sigma '(x_a)\geq \theta c_\sigma >0$ and we may immediately rule out the former case \eqref{exclude1}. Now proceed by contradiction and assume that $u_\sigma (x^*) = \theta$ for some $x^* > x_a$. Without loss of generality, we also assume that $u_\sigma > \theta$ in the open interval $(x_a, x^*)$. Clearly $u'_{\sigma}(x^*)=0$ and $u''_{\sigma }(x^*)\geq 0$. Testing the equation at point $x^*$ we get
$$
f(u_{\sigma }(x^*-\sigma c_{\sigma }\tau))\leq u_{\sigma }(x^*)=\theta.
$$
Moreover, by the monotonicity Lemma~\ref{lem:estim_mono} and the positivity of the speed, we have that $x^*-\sigma c_{\sigma }\tau\in(x_a,x^{*})$. Thus $u_{\sigma}(x^*-\sigma c_{\sigma }\tau)>\theta$ so that 
$f(u_{\sigma^{*}}(x^*-\sigma ^*c_{\sigma ^*}\tau))>\theta$, which is a contradiction ruling out \eqref{exclude2}.

It is now straightforward to show that the topological degree argument of the previous proposition can be performed in the set $S_\theta$ instead of $S$. This completes the proof of the proposition.\end{proof}

\subsection{Construction of a travelling wave}\label{ss:tw}

Equipped with the solution $(c^{a},u^{a})$ of $P_1(a)$ of Proposition
\ref{lem:theta}, we now let $a\to \infty$. This enables to
construct, up to extraction of a subsequence $a_n \to \infty$, a speed
$0 \leq c \leq c_{max} $ and a function $U:\R\to[0,M]$ in
$C^2 _b (\R)$ such that
\begin{equation}\label{eq-onde-construite}
-U''+c U'=f(U(\cdot -c\tau))-U \quad \text{ in }\R,
\end{equation}
and
\begin{equation}\label{masse-onde-construite}
U(0)= \frac{\theta}{2}.
\end{equation}
 From the strong maximum principle (recall that $M= \max_{[0,1] } f = \max_{[0,M]} f$)  we immediately deduce that $0<U<M$. Moreover, it follows from
Lemma \ref{lem:estim_mono} that $U$ is nondecreasing at least on the interval $(-\infty,c\tau]$. In particular, the limit $U(-\infty)< \frac{\theta}{2}$ exists and, since it must satisfy $f(U(-\infty)) - U(-\infty) =0$, one infers that 
\begin{equation}\label{left-limit}
U(-\infty)=0.
\end{equation}
To complete the construction of a travelling wave as stated in Theorem~\ref{th:construction}, our main tasks in this subsection are to prove that $c>0$ and $\liminf_{x\to+\infty} U(x)>\theta$. To do so, we need to go back to the problem in the box to get further estimates. Notice that the proofs of these new estimates involve passing to the limit as $a \to +\infty$, which is why they were not dealt with in subsection~\ref{ss:apriori}.

\begin{lemma}[Boundedness of the first $\theta$ point]
Recall that $x_a >0$, defined in \eqref{def:xa}, is such that $u^a (x_a) = \theta$ and $u^a (x)< \theta$ for all $x \leq x_a$. Then
$$\limsup_{a \to \infty
} x_a < +\infty.$$
\end{lemma}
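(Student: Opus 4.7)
The plan is to argue by contradiction: if $x_{a_n}\to+\infty$ along some subsequence $a_n\to+\infty$, then passing to the limit in the problem $P_1(a_n)$ produces a nondecreasing profile $U\in C^2_b(\R)$ trapped between $0$ and $\theta$, which I will show cannot exist. The delay term will be tamed by exploiting monotonicity, and the ODE-like estimates will then yield the contradiction.

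First, I would use the uniform bounds $c^{a_n}\in(0,c_{max})$ from Lemma~\ref{lem:apriori} and $\|u^{a_n}\|_{C^2}\leq C_0$ from Proposition~\ref{lem:theta}, together with elliptic regularity applied to the equation $-u''+cu'+u=f(u(\cdot-\sigma c\tau))$, to extract (up to a subsequence) $c^{a_n}\to c\in[0,c_{max}]$ and $u^{a_n}\to U$ in $C^2_{loc}(\R)$, where $U\in C^2_b(\R)$ satisfies the travelling wave equation
\begin{equation*}
-U''+cU'=f(U(\cdot-c\tau))-U\quad\text{on }\R.
\end{equation*}
Next, I collect the properties of $U$. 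By Lemma~\ref{lem:estim_mono}, $u^{a_n}$ is increasing on $[-a_n,x_{a_n}]$, so the assumption $x_{a_n}\to+\infty$ makes $U$ nondecreasing on all of $\R$. Since $u^{a_n}(x_{a_n})=\theta$ and $u^{a_n}(x)<\theta$ for $x<x_{a_n}$, the bound $0\leq u^{a_n}\leq\theta$ holds on $(-\infty,x_{a_n}]$, hence $0\leq U\leq\theta$ on $\R$. The normalization \eqref{normalization} together with monotonicity gives $u^{a_n}(0)=\theta/2$, so $U(0)=\theta/2$. Being nondecreasing and bounded, $U$ has limits at $\pm\infty$ which must be zeros of $f(u)-u$ in $[0,\theta]$; Assumption~\ref{ass:f} and $U(0)=\theta/2$ then force $U(-\infty)=0$ and $U(+\infty)=\theta$.

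Now I would extract the contradiction from the bistable structure. Since $U$ is nondecreasing and $c\geq 0$, we have $U(\cdot-c\tau)\leq U\leq\theta$, and Assumption~\ref{ass:f} yields $f(u)\leq u$ on $[0,\theta]$, so
\begin{equation*}
-U''+cU'=f(U(\cdot-c\tau))-U\leq U(\cdot-c\tau)-U\leq 0\quad\text{on }\R.
\end{equation*}
If $c>0$, integrating this inequality from $-\infty$ to $x$ (using $U(-\infty)=0$ and $U'(-\infty)=0$, the latter following from boundedness of $U,U''$ and the existence of limits at $\pm\infty$) gives $U'(x)\geq c\,U(x)$, i.e.\ $(U(x)e^{-cx})'\geq 0$. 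Therefore $U(x)\geq U(0)e^{cx}=(\theta/2)e^{cx}\to+\infty$ as $x\to+\infty$, contradicting $U\leq\theta$. If $c=0$, the equation reduces to $-U''=f(U)-U\leq 0$, so $U$ is convex and bounded on $\R$, hence constant, contradicting $U(-\infty)=0\neq\theta=U(+\infty)$.

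The main obstacle I expect is that the delay prevents a direct energy identity $\int(cU'^2)=-\int_0^\theta(f-u)du$ (the usual trick for local bistable equations); however, the observation above --- that monotonicity of $U$ combined with the trap $U\leq\theta$ turns the right-hand side of the equation into a nonpositive quantity --- completely bypasses the need for such an identity and replaces it by the elementary exponential/convexity dichotomy above.
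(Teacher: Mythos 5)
Your proof is correct and follows essentially the same strategy as the paper: pass to the limit along $a_n\to\infty$, obtain a nondecreasing $U$ trapped in $[0,\theta]$ with $U(0)=\theta/2$, observe $-U''+cU'\leq 0$ from the bistable structure and monotonicity, and derive a contradiction. The only difference is at the very end: rather than splitting into $c>0$ (exponential lower bound) and $c=0$ (convexity), the paper notes that $cU'\geq 0$ directly gives $-U''\leq -U''+cU'\leq 0$, so $U$ is convex, bounded, hence constant $\equiv\theta/2$ in all cases, which is slightly more economical but leads to the same contradiction.
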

\begin{proof}
We proceed by contradiction and assume that, up to a subsequence, $x_a \to +\infty$. In particular, we can assume without loss of generality that $U (x) \leq \theta$, and even $U(x)< \theta$ for all $x \in \mathbb{R}$ from the strong maximum principle. Moreover, the function $U$ is nondecreasing on $\R$ in virtue of Lemma \ref{lem:estim_mono}. As a result
$$-U''\leq -U'' + cU' =f(U(\cdot-c\tau))-U\leq U(\cdot- c \tau) -U \leq 0 \quad \text{ on } \R.
$$
The function $U$ is convex and bounded on $\R$ so it has to be constant and equal to $\frac{\theta}{2}$, which clearly cannot be.
\end{proof}

Let us now go back to showing that $U$ is a travelling wave. To that aim we first show that $ c >0$.

\begin{proof}[Proof of $c>0$ for the constructed wave]
Recalling that $(c^a,u^a)$ is provided by Proposition \ref{lem:theta}, from the above lemma, we can now take a subsequence $a \to +\infty$ (still denoted by $a$ for simplicity) such that $x_a \to x_\infty\geq 0$, and we are equipped with $U$ solving \eqref{eq-onde-construite}, \eqref{masse-onde-construite}, \eqref{left-limit}, nondecreasing on $(-\infty,x_\infty+c\tau]$ but also satisfying
\begin{equation}\label{theta-droite}
U(x_\infty) = \theta, \quad U\geq \theta \quad \text{ on } [x_\infty,\infty).
\end{equation}
Assume by contradiction that $c=0$, so that $U$ solves
$$-U'' = f(U) - U\quad \text{ on } \R .$$
By performing a standard phase plane analysis and because $U(-\infty)=0$, we see that such a positive and bounded solution either tends to 1 in $+\infty$ or oscillates around $\theta$ after $x_\infty$. In the former case, we multiply the equation by $U'$, integrate over $\R$ and get $0=\int _0^1 (f(u)-u)du$ which contradicts \eqref{ass:integrale}, whereas the latter case is excluded by \eqref{theta-droite}.  Hence $c>0$ and, as a consequence, there is $c_{min}>0$ such that
\begin{equation}\label{c-et-cmin}
c^a\geq c_{min} >0,
\end{equation}
 for all $a$ large enough.
\end{proof}


Notice that from \eqref{c-et-cmin} and \eqref{controlslope2} we get
\begin{equation}\label{pente-onde-construite}
U'(x_\infty)\geq \theta  c_{min}>0.
\end{equation} 
This now allows us to improve Lemma \ref{lem:estim_mono} for the solution in the box. 

\begin{lemma}[A priori monotony, a bit further]\label{lem:further}
Up to increasing $a_0$, there is $\zeta >0$ such that, for all $a\geq a_0$, the solution $(c^a,u^a)$ of Proposition~\ref{lem:theta} satisfies $u(x_a+ c\tau)\geq \theta +\zeta$.

Furthermore, there is $\eta>0$ such  that, for all $a\geq a_0$, the solution $(c^a,u^a)$ of Proposition~\ref{lem:theta} is increasing on $[-a,x_a + c\tau +\eta]$.
\end{lemma}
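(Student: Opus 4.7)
The plan is to exploit the uniform $C^2$ bound $\Vert u^a\Vert_{C^2}\leq C_0$ from Proposition~\ref{prop:sol-boite}, the slope estimate $u^{a\prime}(x_a)\geq \theta c^a$ from \eqref{controlslope2}, and the lower bound $c^a\geq c_{min}>0$ from \eqref{c-et-cmin}, together with the boundedness of $x_a$ proved in the previous lemma. Up to enlarging $a_0$ we may assume $a > x_a + c^a\tau + 1$, so that every point tested on below lies strictly inside $(-a,a)$.

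For the first assertion, I would argue purely quantitatively, without any limiting procedure. Since $\Vert u^a\Vert_{C^2}\leq C_0$, the derivative $u^{a\prime}$ is $C_0$-Lipschitz, so for every $\alpha>0$ and every $x\in [x_a, x_a + \alpha]$
$$u^{a\prime}(x)\geq u^{a\prime}(x_a)-C_0\alpha\geq \theta c_{min}-C_0\alpha.$$
Fixing $\alpha := \min\bigl(\theta c_{min}/(2C_0),\,c_{min}\tau\bigr)>0$, a quantity independent of $a$, ensures $u^{a\prime}\geq \theta c_{min}/2$ on $[x_a, x_a + \alpha]\subset [x_a, x_a + c^a\tau]$, and on this latter interval $u^a$ is already known to be increasing by Lemma~\ref{lem:estim_mono}. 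Integrating and using the monotonicity on $[x_a+\alpha, x_a + c^a\tau]$ gives $u^a(x_a + c^a\tau)\geq u^a(x_a+\alpha)\geq \theta + \alpha\theta c_{min}/2$, which yields the first claim with $\zeta := \alpha\theta c_{min}/2 > 0$.

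For the second assertion, I would use a continuation argument. Fix once and for all $\eta > 0$ small enough that $f(u) < \theta + \zeta/2$ whenever $\theta \leq u \leq \theta + C_0 \eta$ (possible since $f$ is continuous with $f(\theta)=\theta$), and introduce
$$\eta^a := \sup\bigl\{\tilde\eta \geq 0 \, : \, u^a \text{ is increasing on } [-a,\, x_a + c^a\tau + \tilde\eta]\bigr\} \geq 0,$$
which is well defined by Lemma~\ref{lem:estim_mono}. I claim $\eta^a \geq \eta$ for all $a \geq a_0$. If not, the point $x_0 := x_a + c^a\tau + \eta^a$ lies in the interior of $(-a,a)$ for $a$ large enough, and by definition of $\eta^a$ satisfies $u^{a\prime}(x_0)=0$ and $u^{a\prime\prime}(x_0)\leq 0$, so that testing the equation at $x_0$ gives
$$u^a(x_0)\leq f\bigl(u^a(x_0 - c^a\tau)\bigr) = f\bigl(u^a(x_a + \eta^a)\bigr).$$
By the first assertion combined with the monotonicity of $u^a$ on $[x_a + c^a\tau, x_0]$, the left-hand side is at least $\theta + \zeta$, while by the $C^1$ bound $u^a(x_a + \eta^a) \leq \theta + C_0 \eta^a \leq \theta + C_0 \eta$, so the right-hand side is strictly less than $\theta + \zeta/2$, a contradiction.

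The main subtlety, rather than a real obstacle, is ensuring that the ``first zero of $u^{a\prime}$ past $x_a + c^a\tau$'' is indeed captured by $\eta^a$ and genuinely lies in the interior of $(-a,a)$; both follow from the uniform bounds on $x_a$ and $c^a$ together with the freedom to enlarge $a_0$. The argument is otherwise elementary and quantitative, bypassing both comparison principles and any compactness passage to the limit.
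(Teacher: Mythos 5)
Your proof is correct, and it is a genuinely different route from the paper's. The paper proves both assertions by contradiction plus compactness: it extracts a sequence of box solutions $u_{a_n}$, passes to a limit function $\tilde U$ on $\R$, and uses either the slope estimate $U'(x_\infty)\geq\theta c_{min}$ to contradict $\tilde U\equiv\theta$ on an interval (first assertion), or the mean value theorem applied to $f$ at the touching points $y_n\to x_{a_n}+c_n\tau$ to contradict the first assertion (second assertion). You instead stay entirely at the level of the box solutions $u^a$ and work quantitatively: the uniform $C^2$ bound makes $u^{a\prime}$ Lipschitz, so the slope lower bound $u^{a\prime}(x_a)\geq\theta c_{min}$ persists over an explicit $a$-independent length $\alpha$, giving an explicit $\zeta$; then the first zero $x_0$ of $u^{a\prime}$ past $x_a+c^a\tau$ is ruled out near $x_a+c^a\tau$ using the equation, the first assertion's $\zeta$, and the continuity of $f$ at $\theta$. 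Both proofs rest on the same structural observations (the positive slope at $x_a$ and the fact that $f(u)\approx\theta$ when $u\approx\theta$), but yours bypasses the limiting procedure entirely and yields explicit constants $\zeta=\alpha\theta c_{min}/2$ and $\eta$ in terms of $C_0$, $c_{min}$, $\theta$, $\tau$ and the modulus of continuity of $f$ near $\theta$; the paper's argument is softer and relies on the later-proved uniform interior estimate $\sup_n\Vert u_n'(x_{a_n}+\cdot)\Vert_{L^\infty(-1,1)}<\infty$, which you render unnecessary.
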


\begin{proof} Assume by contradiction that the first conclusion is false. Then
there are a sequence of box sizes $a_n\to \infty$, and a sequence of solutions $(c_n,u_n)$ on $(-a_n,a_n)$ such that $u_n(x_{a_n} + c_n\tau)\leq \theta +\frac 1n$. As a result, $\theta \leq u_n\leq \theta + \frac 1 n$ on $[x_{a_n},x_{a_n} + c_{min}\tau]$. Letting $n\to \infty$ we can again construct a solution $\tilde U$ satisfying \eqref{eq-onde-construite}, \eqref{masse-onde-construite} as well as \eqref{pente-onde-construite}. This last estimate is in contradiction with the fact that $\tilde U\equiv \theta$ on $[x_\infty,x_\infty+c_{min}\tau]$.

Next assume by contradiction that the second conclusion is false. Then
there are a sequence of box sizes $a_n\to \infty$, a sequence of solutions $(c_n,u_n)$ on $(-a_n,a_n)$, and a sequence of points $y_n \in [x_{a_n}+ c_n\tau , x_{a_n}+ c_n\tau+\frac 1n ]$ where $u'_n(y_n)=0$ and $u''_n(y_n)\leq 0$. Testing the equation at point $y_n$ yields
$$
f(u_n(y_n-c_n\tau))\geq u_n(y_n).$$
As we can choose without loss of generality $y_n$ such that $u_n$ is increasing on $[x_{a_n}, y_n]$, we get
$$f (u_n (y_n - c_n \tau) \geq u_n(x_{a_n}+ c_n\tau).
$$
{}From the first part of the lemma, we infer that
$$ f(u_n (y_n - c_n \tau)) \geq \theta + \zeta.$$
Now from the mean value theorem, we get the existence of $\rho_n\to 0$ such that
\begin{eqnarray*}
f(u_n (y_n - c_n \tau))&=& f(u_n(x_{a_n}))+(y_n-x_{a_n} - c_n\tau)f'(u_n(x_{a_n} +\rho _n))u'_n(x_{a_n} + \rho _n) \\
& =& \theta+(y_n-x_{a_n} - c_n\tau)f'(u_n(x_{a_n} + \rho _n))u'_n(x_{a_n}+ \rho _n)\\
& \geq & \theta +\zeta.
\end{eqnarray*}
We claim (see below) that $\sup _{n\in \N} \Vert u'_n ( x_{a_n}+\cdot)\Vert _{L^{\infty}(-1,1)}<\infty$. Then lettting $n\to\infty$ in the above inequality gives a contradiction.

The claim follows from usual estimates. Recall that $
-u''_n+c_n u_n'=f(u_n(\cdot -c_n\tau))-u_n$.
Since both $c_n$ and the  $L^\infty$ norm of the right hand side member are
uniformly bounded with respect to $n$, the interior elliptic
estimates \cite[Theorem 9.11]{Gil-Tru} imply that, for all $R>0$, all $p>1$, the
sequence $(u_n (x_{a_n}+\cdot))$ is bounded in $W^{2,p}(R,R)$ and therefore
in $C^{1,\beta}[-R,R]$, $\beta:=1-\frac 1p$, from Sobolev
embedding theorem.
 \end{proof}

Recalling from \eqref{controlslope2} and~\eqref{c-et-cmin} that $(u^a) '(x_a)\geq s_1:=\theta c_{min}>0$, we get from standard elliptic estimates as in the previous paragraph that the solution $(c^a,u^a)$ of Proposition~\ref{lem:theta} satisfies
\begin{equation}
\label{pente-pres-zero}
(u^a) '(x)\geq \frac 12 s_1>0, \quad \forall x_a \leq x \leq x_a + \delta ,
\end{equation}
for some $\delta >0$. Let us now fix
$$
0<\delta ^*<\min\left(\frac{\delta s_1}{2}, \frac {\eta s_1}2, \frac{c_{min}\tau s_1}{2}\right),
$$
where $\eta >0$ is as in Lemma \ref{lem:further}. Notice that $\delta ^*>0$ is independent of $a\geq a_0$. Also, in view of Assumption \ref{ass:f} and up to reducing $\delta ^*>0$ if necessary, we can assume that
\begin{equation}
\label{pastropbas}
f(u)\geq \theta +\delta ^*,\quad \forall 1 \leq u\leq M.
\end{equation}
In view of \eqref{pente-pres-zero}, for all $a\geq a_0$ we have that $u^a \left(x_a + \frac{2\delta ^*}{s_1}\right)\geq \theta +\delta ^*$. As a result, for all $a\geq a_0$, the quantity
$$
b=b(a):=\sup\{x\geq 0:\, u^a (y)<\theta+\delta ^*,\  \forall 0\leq y<x_a +x\}
$$
is bounded from above by $\frac{2\delta ^*}{s_1}$. Notice also that since $\frac{2\delta ^{*}}{s_1}<c_{min}\tau$ we know from Lemma \ref{lem:further} that $u$ is increasing on $[-a,x_a + b(a)]$. 

We are now in the position to prove that
\begin{equation}
\label{liminf-boite}
u^a (x)\geq \theta +\delta ^*,  \quad \forall x_a + b(a)\leq x\leq a.
\end{equation}
Assume \eqref{liminf-boite} is false. Then from Lemma \ref{lem:further} and the boundary condition $u^a (a)=1$, there must be a point $x_{min}\geq x_a+ c\tau +\eta$ where $(u^a) '(x_{min})=0$, $(u^a) ''(x_{min})\geq 0$ and
$u^a (x_{min})=\min_{x_a+ b(a)\leq x\leq x_{min}}u^a(x)<\theta +\delta ^{*}$. Testing the equation at point $x_{min}$ we see that
\begin{equation}\label{test-xmin}
f(u^a(x_{min}-c\tau))\leq u^a(x_{min})<\theta +\delta ^{*}.
\end{equation}
But $x_{min}-x_a - c\tau \geq \eta \geq \frac{2\delta ^{*}}{s_1}\geq b(a)$ so that
$u^a (x_{min})\leq u^a (x_{min}-c\tau)\leq M$. On the one hand if $\theta<u^a (x_{min})\leq u^a (x_{min}-c\tau)<1$ then $f(u^a (x_{min}-c\tau))>u^a (x_{min}-c\tau)\geq u^a (x_{min})$, which contradicts \eqref{test-xmin}. On the other hand, if $1\leq u^a (x_{min}-c\tau)\leq M$ it follows from \eqref{pastropbas} that $f(u^a (x_{min}-c\tau))\geq \theta +\delta ^*$, which again contradicts \eqref{test-xmin}. We have thus proved \eqref{liminf-boite}.

\begin{proof}[End of proof of Theorem \ref{th:construction}] Going back to the solution $U$ on the whole line $\R$ constructed in the beginning of subsection \ref{ss:tw}, passing to the limit in \eqref{liminf-boite} as $a \to +\infty$ implies that
$$
\liminf _{x\to +\infty}U(x)\geq \theta +\delta ^*>\theta.
$$
Therefore, the function $U$ is a travelling wave and, up to the shift $U (\cdot +x_\infty)$, it only remains to show that $U$ is increasing on $(-\infty,c\tau]$ to validate all
 the properties stated in Theorem \ref{th:construction}. We actually prove
 \begin{equation}\label{strict-croissance}
 U'(x)>0, \; \forall x< c\tau.
 \end{equation}
 Assume by contradiction that $U'(x_0)=0$ for some $x_0< c\tau$. Since $U$ is nondecreasing on $(-\infty,c\tau]$, this enforces $U''(x_0)=0$ and thus, from the equation, $U(x_0)= f(U(x_0-c\tau))\leq U(x_0-c\tau)$ and hence $U\equiv U(x_0)$ on $[x_0-c\tau,x_0]$ and thus on $(-\infty,x_0]$, a contradiction. 
\end{proof}

\section{Convergence to 1}\label{s:properties}

In this section we investigate further the behaviour of the travelling wave behind the front, i.e. as $x \to +\infty$. 
Theorem~\ref{th:small-delay} is proved in two subsections, each dealing with one of the two sufficient conditions --- \eqref{add-bound} and \eqref{small-delay}--- for convergence to 1. 

Before going further, observe that if $(c,U)\in \R\times C^2_b(\R)$ is a bistable travelling wave then $c\neq 0$. Indeed, assume by contradiction that $c=0$, so that $U$ solves
$-U'' = f(U) - U$  on  $\R$. By performing a standard phase plane analysis and because $U(-\infty)=0$, $\liminf_{x\to +\infty}U(x)>\theta$ , we see that such a positive and bounded solution has to tend to 1 in $+\infty$. We multiply the equation by $U'$, integrate over $\R$ and get $0=\int _0^1 (f(u)-u)du$ which contradicts \eqref{ass:integrale}. 

\subsection{Under an additional bound on $f$}\label{ss:add_bd}

We prove the following proposition, from which Theorem~\ref{th:small-delay} under assumption \eqref{add-bound} immediately follows.
\begin{proposition}[Convergence to 1 when $M=1$]
Assume that
$$
M := \max_{[0,1]} f =1.
$$
Then any bistable travelling wave $(c,U)\in \R \times C^{2}_b(\R)$   satisfies $0< U<1$ and $U(+\infty)=1$.
\end{proposition}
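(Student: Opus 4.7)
The plan has two stages: first establish the pointwise bound $0 < U < 1$ on $\R$, and then prove the asymptotic identity $U(+\infty) = 1$. The main tool throughout will be a translation-and-compactness argument producing limit solutions of the travelling wave equation at the supremum/infimum of $U$, combined with the sharp bistable dichotomy of $f$ and the hypothesis $M = 1$.

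For the upper bound I would set $U^{*} := \sup_\R U$ and extract a $C^2_{\mathrm{loc}}$ limit $V$ of translates $U(\cdot + z_n)$ along a sequence with $U(z_n) \to U^{*}$, using the $C^2_b$ bound on $U$ together with interior elliptic regularity. Then $V$ solves the travelling wave equation with $V(0) = U^{*}$ and $V \leq U^{*}$ on $\R$, so $V'(0) = 0$, $V''(0) \leq 0$, and the equation at $0$ yields $U^{*} \leq f(V(-c\tau))$. Splitting according to whether $V(-c\tau) \leq 1$ (then $f(V(-c\tau)) \leq M = 1$) or $V(-c\tau) > 1$ (then $f(V(-c\tau)) < V(-c\tau) \leq U^{*}$ by Assumption~\ref{ass:f}) gives $U^{*} \leq 1$. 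The strict inequality $U < 1$ then follows from the strong maximum principle applied to $\phi := 1 - U \geq 0$, which satisfies the elliptic inequality $-\phi'' + c\phi' + \phi = 1 - f(U(\cdot - c\tau)) \geq 0$: if $\phi$ vanished at a point of $\R$, it would vanish identically, contradicting $\phi(-\infty) = 1$.

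The heart of the proof is showing $\liminf_{x \to +\infty} U(x) = 1$. Let $\ell := \liminf_{x \to +\infty} U(x)$, which by the previous step and the definition of a bistable travelling wave lies in $(\theta, 1]$. Assume by contradiction $\ell < 1$ and pick $x_n \to +\infty$ with $U(x_n) \to \ell$; by elliptic regularity the translates $U(\cdot + x_n)$ converge along a subsequence in $C^2_{\mathrm{loc}}$ to a solution $W$ of the travelling wave equation with $W(0) = \ell$ and $W(x) \geq \ell$ for every $x \in \R$ (since $x + x_n \to +\infty$ forces $\liminf_n U(x + x_n) \geq \ell$). Hence $0$ is a global minimum of $W$, $W'(0) = 0$, $W''(0) \geq 0$, and the equation at $0$ gives $f(W(-c\tau)) \leq \ell$. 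Two cases remain: if $W(-c\tau) = \ell$, then $f(\ell) \leq \ell$, contradicting $f > \mathrm{id}$ on $(\theta, 1) \ni \ell$; if $W(-c\tau) > \ell$, then $f(W(-c\tau)) \leq \ell < W(-c\tau)$ forces $W(-c\tau) \in (0, \theta) \cup (1, +\infty)$ by Assumption~\ref{ass:f}, which is incompatible with $\theta < \ell < W(-c\tau) \leq 1$. Hence $\ell = 1$, and combined with $U \leq 1$ we conclude $U(+\infty) = 1$.

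The main delicate point is the construction of the shifted limit $W$ at the $\liminf$: one must verify that both the travelling wave equation and the global minimum property $W \geq \ell = W(0)$ pass to the limit, so that the equation can be evaluated at a minimum involving the delay argument $-c\tau$. Once this is in place the bistable structure of $f$ closes the argument by an elementary case analysis, and no Lyapunov-type or delay-specific machinery is needed.
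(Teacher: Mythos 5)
Your proof is correct and follows essentially the same route as the paper: translation-and-compactness to produce a limit solution of the travelling wave equation at the supremum (resp.\ at the $\liminf$ as $x\to+\infty$), then evaluation of the equation at the extremum together with the sign conditions in Assumption~\ref{ass:f} and the hypothesis $M=1$, and the strong maximum principle for the strict inequality $U<1$. The only cosmetic difference is that you treat the ``sup attained'' and ``$\limsup$ at $+\infty$'' cases uniformly through a single extraction, whereas the paper separates them; the substance is identical.
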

\begin{proof}According to Theorem~\ref{th:construction}, the travelling wave we have constructed in the previous section satisfies $0<U<1$. Let us check that this remains true for any bistable travelling wave $(c,U)\in \R \times C^2_b(\R)$. Recall first that the inequality $U>0$ is part of Definition~\ref{def:tw}. Moreover, by the strong maximum principle, it is enough to prove the large inequality $U \leq 1$. 

We proceed by contradiction and assume first that there exists $x_0 \in \mathbb{R}$ such that $U(x_0) = \sup_{\mathbb{R}} U > 1$. Evaluating the equation at $x_0$, we get 
$$0 \leq  - U '' (x_0) =  f(U(x_0-c\tau)) - U(x_0).$$
Thus $f(U (x_0 - c \tau)) >1 = \max_{[0,1]} f$, which enforces $U (x_0 - c \tau)>1$. Then $ U(x_0 - c \tau) >  f(U (x_0 - c \tau)) \geq U (x_0)$, a contradiction with our choice of $x_0$.

Next, consider the case when
$$L:=\limsup_{x\to+\infty} U(x) > 1.$$
Take a sequence $x_n \to +\infty$ such that $U(x_n) \to L$. By standard interior elliptic estimates and up to a subsequence, $U(\cdot+x_n)$ converges  locally uniformly to a $U_\infty$ which also solves
\begin{equation}\label{qsd_11}
-U_\infty '' + c U_\infty ' = f(U_\infty ( \cdot - c \tau)) - U_\infty \quad  \text{ in } \R,
\end{equation}
and satisfies $U_\infty  \leq L=U_\infty (0)$. Evaluating the equation at $0$ and proceeding as above, we again reach a contradiction. Therefore $L\leq 1$ and from the above paragraph $U\leq 1$, and thus $U<1$.

%
%

Let us now turn to the proof that $U(+\infty) = 1$. Proceed by contradiction and assume that
$$l:=\liminf_{x\to+\infty} U(x) < 1.$$
Take a sequence $x_n \to +\infty$ such that $U(x_n) \to l$. By standard interior elliptic estimates and up to a subsequence, $U(\cdot+x_n)$ again converges  locally uniformly to a solution~$U_\infty$ of~\eqref{qsd_11}, and such that $U_\infty  \geq l=U_\infty (0)$. Evaluating the equation at $0$, we get
\begin{equation}\label{to-be-denied}
0 \geq -U_\infty '' (0)  = f (U_\infty (- c \tau)) - U_\infty (0).
\end{equation}
By the definition of a travelling wave --- in particular $\liminf _{x\to +\infty} u(x)>\theta$--- and because we have just proved that $U<1$, there exists $\delta>0$ small enough such that $\theta + \delta < U_\infty \leq 1$. Moreover, by construction $U_\infty (0) < 1$. 
Then either $U_\infty (- c \tau)) =1$, in which case $f(U_\infty (-c\tau)) = 1$, or $U_\infty ( -c \tau) \in (\theta,1)$ in which case $ f(U_\infty (-c \tau)) > U_\infty (-c\tau)$. In both cases it follows that $f(U_\infty(-c \tau)) - U_\infty (0) >0$, a contradiction with \eqref{to-be-denied}. The proposition is proved.
\end{proof}

\subsection{When the delay is small}\label{ss:small-delay}

We consider here the case of small delay and  prove Theorem \ref{th:small-delay} under assumption \eqref{small-delay}. Using $L^2$ estimates (see \cite{Alf-Cov} for related arguments), we find a sufficient
condition for a bistable travelling wave to converge to 1 at $+\infty$.
Let us start with the following lemma.

\begin{lemma}[Sufficient condition for $u'\in L^2$]\label{lem:uL2}
Let $(c,U)\in\R\times C^2_b(\R)$ be a bistable travelling wave.  Assume $
\tau\Vert f'\Vert _{L^\infty(0,\Vert U \Vert _\infty)}<1$.

Then $U' \in
L^2(\R)$ and $U'(\pm\infty)=0$.\end{lemma}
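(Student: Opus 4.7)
The plan is to derive an energy identity by multiplying the travelling-wave equation $-U'' + cU' = f(U(\cdot - c\tau)) - U$ by $U'$ and integrating over $[-R,R]$, then to absorb the nonlocal delay contribution using the smallness assumption. After integration by parts,
\begin{equation*}
-\tfrac12\bigl[(U')^2\bigr]_{-R}^R + c\int_{-R}^R (U')^2\,dx + \tfrac12\bigl[U^2\bigr]_{-R}^R = \int_{-R}^R f(U(x-c\tau))\,U'(x)\,dx,
\end{equation*}
and since $U\in C^2_b(\R)$, all pointwise boundary terms at $\pm R$ are uniformly bounded.

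I would then split the right-hand side by writing $F'=f$:
\begin{equation*}
\int_{-R}^R f(U(x-c\tau))\,U'(x)\,dx = F(U(R))-F(U(-R)) + \int_{-R}^R \bigl[f(U(x-c\tau))-f(U(x))\bigr]\,U'(x)\,dx.
\end{equation*}
The first summand is again bounded uniformly in $R$, and the remaining \emph{delay defect} is the heart of the argument. By the mean value theorem, $|f(U(x-c\tau))-f(U(x))|\leq \|f'\|_{L^\infty(0,\|U\|_\infty)}\int_{J_x}|U'(t)|\,dt$, where $J_x$ is an interval of length $|c|\tau$ adjacent to $x$. Using $2|U'(x)||U'(t)|\leq (U'(x))^2+(U'(t))^2$ and Fubini to exchange the order of integration, I expect to control the delay defect by $\|f'\|_{L^\infty(0,\|U\|_\infty)}\bigl(|c|\tau\, I(R) + C_0\bigr)$, where $I(R):=\int_{-R}^R (U')^2\,dx$ and $C_0$ depends only on $\|U'\|_\infty$, $|c|$ and $\tau$.

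Putting everything together yields, for some constant $K$ independent of $R$,
\begin{equation*}
|c|\bigl(1-\tau\|f'\|_{L^\infty(0,\|U\|_\infty)}\bigr)\,I(R) \leq K.
\end{equation*}
Because $c\neq 0$ (as noted at the start of the section) and $\tau\|f'\|_{L^\infty(0,\|U\|_\infty)}<1$, the prefactor is strictly positive, so $I(R)$ is bounded uniformly in $R$ and letting $R\to\infty$ gives $U'\in L^2(\R)$.

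For the second conclusion, I would use the equation $U'' = cU' + U - f(U(\cdot-c\tau))$ to see that $U''$ is bounded (since $U$ and $U'$ are), hence $U'$ is Lipschitz and in particular uniformly continuous. The standard fact that a uniformly continuous $L^2$ function on $\R$ vanishes at $\pm\infty$ then concludes. The main obstacle is the treatment of the delay defect: the hypothesis $\tau\|f'\|_{L^\infty(0,\|U\|_\infty)}<1$ is precisely what is needed to absorb it into the energy term on the left-hand side.
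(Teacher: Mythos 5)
Your proposal is correct and follows essentially the same energy-estimate strategy as the paper: multiply the travelling-wave equation by $U'$, integrate over a bounded window, isolate the delay defect $\int [f(U(x-c\tau))-f(U(x))]U'(x)\,dx$, and absorb it using the smallness hypothesis $\tau\|f'\|_{L^\infty}<1$ together with $c\neq 0$. The one place you diverge is in the absorption: you use the mean value theorem, Young's inequality $2|U'(x)||U'(t)|\leq (U'(x))^2+(U'(t))^2$ and Fubini, arriving directly at the linear bound $|c|\bigl(1-\tau\|f'\|\bigr)I(R)\leq K$, whereas the paper applies Cauchy--Schwarz twice and lands on a quadratic-type inequality $|c|R_{A,B}\leq \text{const}+|c|\tau\|f'\|\sqrt{R_{A,B}(R_{A,B}+\text{const})}$, from which boundedness follows by the same smallness of $\tau\|f'\|$; your variant is marginally more streamlined but is the same idea. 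The deduction of $U'(\pm\infty)=0$ from $U'\in L^2$ and uniform continuity also matches the paper.
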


\begin{proof}  Let us denote $M_0:=\Vert U\Vert _\infty$, and $M_1:=\Vert U'\Vert _\infty$. We rewrite the equation as
$$
cU'(x)=U''(x)+f(U(x))-U(x)+f(U(x-c\tau))-f(U(x))\,, $$
multiply it by $U'$, and then
integrate from $-A<0$ to $B>0$ to get
\begin{equation}\label{eq1}
c\int _{-A}^B {U'}^2=\left[\frac 12 {U'}^2+F(U)-\frac 12 U^2\right]_{-A}^B+\int_{-A}^B(f(U(x-c\tau))-f(U(x)))U'(x)dx,
\end{equation}
where $F$ denotes a primitive of $f$. We denote by $I_{A,B}$ the last integral appearing above and use
the Cauchy-Schwarz inequality to see
\begin{equation}\label{eq2}
{I_{A,B}}^2\leq \int _{-A}^B U'^2(x)dx \int _{-A}^B (f(U(x-c\tau))-f(U(x)))^{2}dx.
\end{equation}
Now, for a given $x$, we write
$$
f(U(x-c\tau))-f(U(x))=c\tau \int _0  ^{1} -f'(U(x-c\tau z))U'(x-c\tau z)dz,
$$
so that another application of the Cauchy-Schwarz inequality
yields
\begin{eqnarray*}
(f(U(x-c\tau))-f(U(x)))^2& \leq &c ^{2}\tau ^{2} \int_0 ^{1}f'^{2}(U(x-c\tau z))dz\int _0^{1}U'^{2}(x-c\tau z)dz\\
&\leq& c ^{2}\tau ^{2} \Vert f'\Vert _{L^{\infty}(0,M_0)}^{2}\int _0^{1}U'^{2}(x-c\tau z)dz.
\end{eqnarray*}
Integrating this we find
$$
\int_{-A}^B (f(U(x-c\tau))-f(U(x)))^{2}dx  \leq c ^{2}\tau ^{2} \Vert f'\Vert _{L^{\infty}(0,M_0)}^{2}\int _0^{1}\int _{-A-c\tau z}^{B-c\tau z}U'^{2}(y)dydz.
$$
By cutting into three pieces, we get
$$
\int _{-A-c\tau z}^{B-c\tau z}U'^{2}(y)dy
\leq   \int_{-A}^{B}{U'}^2+2\vert c\vert \tau M_1^2 z,$$
which in turn implies
\begin{equation}\label{eq4}
\int_{-A}^B (f(U(x-c\tau))-f(U(x)))^{2}dx  \leq c ^{2}\tau ^{2} \Vert f'\Vert _{L^{\infty}(0,M_0)}^2 \left(\int _{-A} ^{B}U'^{2}+\vert c\vert \tau M_1^{2}\right).
\end{equation}
If $R_{A,B}:=\int_{-A}^{B}{U'}^2$, combining (\ref{eq1}),
(\ref{eq2}) and (\ref{eq4}) we see that
$$
|c| R_{A,B} \leq \left(M_1^{2}+2\Vert F\Vert _{L^{\infty}(0,M_0)}+M_0^{2}\right)+\vert c\vert \tau \Vert f'\Vert _{L^{\infty}(0,M_0)}\sqrt{R_{A,B}(R_{A,B}+\vert c\vert \tau M_1^{2})}
$$
Since $\tau\Vert f'\Vert _{L^\infty(0,M_0)}<1$ and $c\neq 0$ (see the beginning of  Section \ref{s:properties}), the upper estimate compels
$R_{A,B}=\int_{-A}^B {U'}^2$ to remain bounded, so that $U'\in
L^2$. Since $U'$ is uniformly continuous on $\R$, this implies
$U'(\pm\infty)=0$. This concludes the proof of the lemma.
 \end{proof}

 We are now in the position to complete the proof of Theorem \ref{th:small-delay}.

\begin{proof}[Proof of Theorem \ref{th:small-delay} under assumption \eqref{small-delay}]
 Under the assumptions of Lemma~\ref{lem:uL2} above, denote by $\mathcal A$ the set of
accumulation points of $U$ in $+\infty$. Since $\liminf _{x\to
\infty}U(x)>\theta$, we have $\mathcal A \subset(\theta,\Vert U
\Vert _\infty]$. Let $l \in \mathcal A$. There is $x_n \to
\infty$ such that $U(x_n)\to l$. Then $v_n(x):=U(x+x_n)$ solves
$$
-{v_n}''+c{v_n}'=f(v_n(\cdot -c\tau))-v_n \quad \textrm{ on }
\R\,.
$$
Since the $L^\infty$ norm of the right hand side member is
uniformly bounded with respect to $n$, the interior elliptic
estimates imply that, for all $R>0$, all $1<p<\infty$, the
sequence $(v_n)$ is bounded in $W^{2,p}([-R,R])$. From Sobolev
embedding theorem, one can extract $v_{\varphi (n)} \to v$
strongly in $C^{1,\beta}_{loc}(\R)$ and weakly in $W^{2,p}_{loc}
(\R)$. It follows from Lemma \ref{lem:uL2} that
$$
v'(x)=\lim _{n\to\infty} U'(x+x_{\varphi(n)})=0,
$$
so that $v\equiv 0$ or $v\equiv \theta$ or $v\equiv 1$. From
$v(0)=\lim _n U(x_{\varphi(n)})=l>\theta$ we deduce that $l=1$.
Therefore $U(+\infty)$ exists and is equal to $1$.
\end{proof}

\section{Oscillations around 1}\label{s:oscill}

The last section deals with a large delay case where oscillations around~1 occur, that is we prove Theorem \ref{th:large-delay}.  We refer the reader to \cite[Theorem 3]{Trofimchuk} for a similar oscillations result in a KPP situation, and from which  we borrow some of the arguments. We start with some preparations.

\subsection{Preliminary on related monotone problems}\label{ss:prelim}

This subsection is devoted to the presentation of auxiliary results, related to bistable nonlocal problems with monotony that will be used in the proof of Theorem \ref{th:large-delay}. To that aim we consider
\begin{equation}\label{g-monotone}
\text{  a smooth nondecreasing function $g:[0,1]\to [0,1]$},
\end{equation}
as well as the function $G$ defined by $G(u):=g(u)-u$.
We assume that $G$ is bistable between $0$ and $1$ in the sense that there exists $\theta\in (0,1)$ such that
\begin{equation}\label{hyp-bistable}
\begin{cases}
G(0)=G(\theta)=G(1)=0,\\
G'(0)<0,\;G'(1)<0\text{ and }G'(\theta)>0,\\
G(u)<0\text{ for $u\in (0,\theta)$ and $G(u)>0$ for $u\in (\theta,1)$}.
\end{cases}
\end{equation}
Together with this assumption, we consider the following nonlocal problem
\begin{equation}\label{eq-shift}
\left(\partial_t-\partial_{xx}\right)v(t,x)=g\left(v(t,x-h)\right)-v(t,x),\;\;t>0,\;x\in\R,
\end{equation}
for some shift parameter $h\in \R$.

Our preliminary result reads as follows.

\begin{lemma}[On the monotone problem with shift] \label{lem:tw-monotone2}
Let \eqref{g-monotone} and \eqref{hyp-bistable} hold. Then the following properties hold.
\begin{itemize}
\item[$(i)$] Let $h\in\R$ be given. Then problem \eqref{eq-shift} admits a unique (up to translation) travelling wave solution $\left(c_h,U_h\right)\in \R\times C^2_b(\R)$ such that
\begin{equation*}
\lim_{x\to-\infty} U_h(x)=0,\;\lim_{x\to+\infty} U_h(x)=1,\text{ and }U_h'>0.
\end{equation*}
\item[$(ii)$] Let  $h\in\R$ be given.  If $v_0\in L^\infty(\R)$ satisfies $0\leq v_0\leq 1$ and
\begin{equation*}
\limsup_{x\to-\infty}v_0(x)<\theta\text{ and }\liminf_{x\to+\infty} v_0(x)>\theta,
\end{equation*}
then the solution $v=v(t,x)$ of equation \eqref{eq-shift} supplemented with the initial datum $v_0$ satisfies 
\begin{equation*}
\sup_{x\in\R}\left|v(t,x)-U_h\left(x+c_ht+\xi\right)\right|=O\left(e^{-\kappa t}\right)\text{ as }t\to\infty, 
\end{equation*}
wherein $\xi$ is some constant depending on the initial datum $v_0$, and $\kappa>0$ is some given constant (independent of $v_0$).
\item[$(iii)$] The wave speed satisfies $c_h\to c_0$ as $h\to 0$, wherein $c_0\in\R$ is the unique wave speed associated with the bistable (local) travelling wave problem
\begin{equation*}
\begin{cases}
-U''(x)+c_0U'(x)=G\left(U(x)\right),\;x\in\R,\\
U(-\infty)=0,\;U(+\infty)=1,\text{ and }U'>0.
\end{cases}
\end{equation*}
\end{itemize}
\end{lemma}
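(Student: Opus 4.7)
The plan for part $(i)$ is to apply the general travelling wave theory for monotone bistable semiflows. Since $g$ is nondecreasing, the nonlocal term $g(v(\cdot-h))$ is monotone with respect to $v$, so equation \eqref{eq-shift} admits a comparison principle and generates a monotone semiflow on $L^\infty(\R;[0,1])$. The bistable assumption \eqref{hyp-bistable}, together with the fact that $0$, $\theta$ and $1$ are the only fixed points of the ODE $\dot u=G(u)$, places us in the framework of Fang and Zhao \cite{Fan-Zha2}, which produces a travelling wave $(c_h,U_h)$ with $U_h$ strictly increasing and connecting $0$ to $1$. Uniqueness up to translation then follows from Smith and Zhao \cite{Smi-Zha}, or alternatively from a direct sliding argument using the comparison principle.

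For part $(ii)$, I would invoke the squeezing technique of Chen adapted to the monotone bistable nonlocal semiflow. The assumption on $v_0$ ensures, via a sub/supersolution argument modeled on the ODE dynamics of $G$, that after some finite time $T_0$ the solution $v(T_0,\cdot)$ lies between two translates $U_h(\cdot-\xi_-)$ and $U_h(\cdot-\xi_+)$. From time $T_0$ onwards, one constructs perturbed wave profiles of the form $U_h(x+c_h t-\xi_\pm(t))\pm\delta q(t)$ which are super/subsolutions, with $\xi_\pm(t)$ converging exponentially to a common limit $-\xi$ and $q(t)$ decaying exponentially. Propagation by the monotone semiflow then yields $\sup_{x\in\R}|v(t,x)-U_h(x+c_h t+\xi)|=O(e^{-\kappa t})$, with the rate $\kappa>0$ determined by the spectral gap at the stable equilibria $0$ and $1$, hence independent of $v_0$.

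Part $(iii)$ is the principal task, and here is the strategy I would follow. After normalizing $U_h(0)=\theta$, interior Schauder estimates combined with the uniform bound $0\leq U_h\leq 1$ give $\|U_h\|_{C^{2,\alpha}_b(\R)}\leq C$ for $|h|\leq 1$. The bound on $|c_h|$ comes from the energy identity: multiplying the wave equation $-U_h''+c_h U_h'=g(U_h(\cdot-h))-U_h$ by $U_h'$ and integrating over $\R$ yields $c_h\int_\R(U_h')^2=\int_0^1 G(u)\,du+R(h)$, where $R(h)\to 0$ as $h\to 0$ by dominated convergence using the $C^1$ bound on $U_h$. Next, for any sequence $h_n\to 0$, I would extract a subsequence along which $c_{h_n}\to\tilde c$ and $U_{h_n}\to U_\infty$ in $C^2_{\mathrm{loc}}(\R)$. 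Passing to the limit gives $-U_\infty''+\tilde c U_\infty'=G(U_\infty)$ on $\R$ with $U_\infty$ nondecreasing and $U_\infty(0)=\theta$. The monotonicity forces $U_\infty(\pm\infty)\in\{0,\theta,1\}$; a short phase-plane argument (together with the normalization) rules out the constant limit $U_\infty\equiv\theta$, so $U_\infty(-\infty)=0$ and $U_\infty(+\infty)=1$. By uniqueness of the local bistable wave, $\tilde c=c_0$, and since every subsequence has the same limit, the full convergence $c_h\to c_0$ holds.

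The main obstacle I anticipate is in $(iii)$: securing a uniform positive lower bound on $\int_\R(U_h')^2$ (or equivalently, on the slope of $U_h$ at the level $\theta$) that prevents the limiting profile from degenerating into a constant. Uniform $C^2$ bounds combined with the fact that $U_h$ must attain every value in $[0,1]$ should provide this, but the argument must be handled carefully because both the energy identity and the conclusion rely on non-degeneracy of the limit; this is precisely where the bistable monotone structure is used in an essential way.
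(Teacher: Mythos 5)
Your treatment of parts $(i)$ and $(ii)$ is sound; the paper derives both directly from the results of Chen \cite{Chen} on nonlocal monotone bistable problems, which is essentially the same framework you invoke via Fang--Zhao, Smith--Zhao and the squeezing method. No issue there.

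For part $(iii)$, however, you have correctly diagnosed the crux of the difficulty but have not resolved it, and the resolution is not a detail --- it is the whole content of the argument. Two specific gaps remain. First, to extract a convergent subsequence of speeds you need a priori boundedness of $\{c_h\}$ near $h=0$, but your proposed route via the energy identity $c_h\int(U_h')^2=\int_0^1 G+R(h)$ is circular: it only controls $c_h$ if you already have a uniform positive lower bound on $\int(U_h')^2$, which is exactly the non-degeneracy you have not established. The paper instead observes that $h\mapsto c_h$ is \emph{decreasing}, a consequence of the stability result $(ii)$, the comparison principle and the monotonicity of the profiles; monotonicity of the speed immediately gives local boundedness, with no appeal to the energy identity. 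Second, and more seriously, your normalization $U_h(0)=\theta$ does not rule out the degenerate limit $U_\infty\equiv\theta$: uniform $C^2$ bounds permit the profile to flatten arbitrarily near the level $\theta$, and the \lq\lq attains every value in $[0,1]$'' property is lost in the locally uniform limit. The paper's fix is to track \emph{two} normalizations simultaneously, translating $U_{h_n}$ so that one copy $u_n$ passes through $\theta/2$ at the origin and another copy $v_n$ passes through $(1+\theta)/2$. The limits then satisfy $u(0)=\theta/2$ (so $u(-\infty)=0$, $u(+\infty)\in\{\theta,1\}$) and $v(0)=(1+\theta)/2$ (so $v(-\infty)\in\{0,\theta\}$, $v(+\infty)=1$); if both degenerate to half-waves terminating at $\theta$, then multiplying the respective equations by $u'$ and $v'$ and integrating over $\R$ forces $c^*<0$ from the $(0,\theta)$-monostable piece and $c^*>0$ from the $(\theta,1)$-monostable piece, a contradiction. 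This two-normalization device is the missing idea; as your proposal stands, the central case that the limiting profile collapses is left open.
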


\begin{rem}\label{REM1}It is well-known that the speed $c_0$ defined above is positive when $\int_0^1 G(u)du>0$. Hence, because of  $(iii)$, in that case one gets
$$
c_h>0 \text{ for all $|h|<<1$}.
$$
\end{rem}

\begin{proof} Items $(i)$ and $(ii)$ follow from the results of Chen \cite{Chen}. It thus remains to prove $(iii)$.
To that aim let us first notice that the family $\{c_h\}_{h\in\R}$ is decreasing with respect to $h$. This directly follows from the stability result stated in $(ii)$ combined with the comparison principle (recall that $g$ is nondecreasing on $[0,1]$) and the monotony of travelling waves. Next we take a sequence $h_n\to 0$ and need to show $c_{h_n}\to c_0$ as $n\to\infty$. To do so,  let us choose two normalisation sequences $\left\{\xi_n^1\right\}$ and $\left\{\xi_n^2\right\}$ such that, for all $n\geq 0$, 
\begin{equation*}
U_{h_n}(\xi_n^1)=\frac{\theta}{2} \ \text{ and }  \ U_{h_n}(\xi_n^2)=\frac{1+\theta}{2}.
\end{equation*}
Then we define the functions
\begin{equation*}
u_n(x):=U_{h_n}(x+\xi_n^1)\  \text{ and } \ v_n(x):=U_{h_n}(x+\xi_n^2),
\end{equation*}
so that
\begin{equation*}
u_n(0)=\frac{\theta}{2}\  \text{ and } \ v_n(0)=\frac{1+\theta}{2}.
\end{equation*}
Next, due to elliptic regularity, and possibly along a subsequence, we can assume  $\left(u_n,v_n\right)(x)\to (u,v)(x)$ locally uniformly up to their second order derivatives, and also  that $c_{h_n}\to c^*\in\R$.
As a consequence, the functions $u$ and $v$ are nondecreasing and satisfy
\begin{equation*}
\begin{split}
&-u''(x)+c^* u'(x)=G\left(u(x)\right),\;x\in\R,\\
&-v''(x)+c^* v'(x)=G\left(v(x)\right),\;x\in\R,\\
\end{split}
\end{equation*}
together with the normalisation conditions
\begin{equation*}
u(0)=\frac{\theta}{2} \ \text{ and } \ v(0)=\frac{1+\theta}{2}.
\end{equation*}
The monotonicity properties for $u$ and $v$ ensure that
\begin{equation*}
\begin{split}
&u(-\infty)=0\text{ and }u(+\infty)\in\{\theta,1\},\\
&v(-\infty)\in\{0,\theta\}\text{ and }v(+\infty)=1.
\end{split}
\end{equation*}
If $u(+\infty)=1$ or $v(-\infty)=0$, then $c^*=c_0$ and the result follows.
It is therefore sufficient to exclude the situation where $u(+\infty)=\theta$ and $v(-\infty)=\theta$. 
However, multiplying the $u-$ and $v-$equations by $u'$ and $v'$ respectively and integrating over $\R$ yield
 $c^*<0$ from the monostable $u$-equation and $c^*>0$ from the monostable $v$-equation, a contradiction. The lemma is proved.
\end{proof}

\subsection{Proof of Theorem \ref{th:large-delay} on oscillations}\label{ss:osci}

In this subsection, let Assumption \ref{ass:f-osc} hold. Setting
\begin{equation*}
M_1:=f(\beta)\text{ and }M_2=\left(f\circ f\right)(\beta),
\end{equation*}
Assumption \ref{ass:f-osc} implies in particular that $M_1>1$, $\theta<\alpha<M_2<1$, and
\begin{equation*}
f(u)\begin{cases}  >1\ \ \text{ for }u\in [M_2,1),\\<1 \ \ \text{ for }u\in (1,M_1].\end{cases}
\end{equation*}

In the sequel, we consider $(c,U) \in \R\times C^2 _b(\R)$ a bistable travelling wave of \eqref{eq} provided by Theorem \ref{th:construction}. This means, in particular, that $0<U(x)<M_1$ for all $x\in \R$, and
\begin{equation}\label{propri-wave}
\begin{cases}
c>0,\; U(0)=\theta,\\
U'(x)>0,\;\forall x\in (-\infty,c\tau],\\
U(x)>\theta,\;\forall x>0\text{ and }\displaystyle \liminf_{x\to+\infty}U(x)>\theta.
\end{cases}
\end{equation}
For notational simplicity we shall write $h:=c\tau>0$. 

Before going to the proof of Theorem \ref{th:large-delay}, we need  some basic qualitative properties of the travelling wave solution $(c,U)$. Our first lemma is concerned with the maximal interval of monotonicity of $U$. From \eqref{strict-croissance}, we can define
\begin{equation}\label{def-sigma*}
\sigma_*=\sup\left\{\sigma\in \R:\;U'(x)>0,\;\forall x\in (-\infty,\sigma)\right\} \in [h,+\infty].
\end{equation}
The proof of the following lemma could be borrowed from \cite[Theorem 11 and Theorem 13]{Trofimchuk}, but we propose an alternative proof that makes use of a sliding argument.

\begin{lemma}[Behaviour of $U$ at $\sigma _*$]\label{LE-max-monotone}
The following alternative holds.
\begin{itemize}
\item[$(i)$] If $\sigma_*=+\infty$ then $U(+\infty)=1$. 
\item[$(ii)$] If $\sigma_*<+\infty$ then  $U\left(\sigma_*\right)>1$. 
Moreover in that case, there exists $\sigma_{**}\in \left(\sigma_*,\infty\right]$ such that $U'(x)<0$ for all $x\in\left(\sigma_*,\sigma_{**}\right)$ and $U'(\sigma_{**})=0$.  If $\sigma_{**}<\infty$ then
\begin{equation*}
U\left(\sigma_{**}\right)<1<U\left(\sigma_{**}-h\right).
\end{equation*}
 In addition, if $\sigma_{**}<\sigma_*+h$ then $U'(x)>0$ on $\left(\sigma_{**},\sigma_{*}+h\right]$.
\end{itemize}
\end{lemma}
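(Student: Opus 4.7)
For $(i)$, I would observe that if $\sigma_*=+\infty$, then $U$ is strictly monotone on $\R$ in view of~\eqref{def-sigma*}, bounded below by $0$ and above by $M_1$, and thus admits a limit $L:=U(+\infty)$. From~\eqref{propri-wave} we have $L\geq \liminf_{x\to+\infty}U>\theta$. Standard interior elliptic estimates on the translates $U(\cdot+n)$ give $U'(x),U''(x)\to 0$ as $x\to+\infty$, so passing to the limit in the travelling wave equation forces $f(L)=L$; by Assumption~\ref{ass:f} this leaves only $L=1$. For $(ii)$, assume $\sigma_*<+\infty$. By continuity and the supremum in~\eqref{def-sigma*}, $U'(\sigma_*)=0$; moreover $U''(\sigma_*)\leq 0$, because otherwise $U'$ would be strictly increasing at $\sigma_*$ and hence positive on a right neighbourhood, contradicting the maximality. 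Evaluating the travelling wave equation at $\sigma_*$ then yields
\[
f(U(\sigma_*-h))-U(\sigma_*)\;=\;-U''(\sigma_*)\;\geq\;0,
\]
and combined with the strict monotony $U(\sigma_*-h)<U(\sigma_*)$ this forces $f(U(\sigma_*-h))>U(\sigma_*-h)$, placing $U(\sigma_*-h)\in(\theta,1)$ by Assumption~\ref{ass:f}.

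The main obstacle is to show $U(\sigma_*)>1$. I would argue by contradiction: assuming $U(\sigma_*)\leq 1$, the wave $U$ maps $(-\infty,\sigma_*]$ strictly increasingly into $[0,1]$, and the plan is to reach a contradiction via a sliding argument built on Lemma~\ref{lem:tw-monotone2}. First, pick a smooth nondecreasing $g:[0,1]\to[0,1]$ satisfying~\eqref{g-monotone}--\eqref{hyp-bistable}, with $g\leq f$ on $[0,1]$, $g$ coinciding with $f$ in neighbourhoods of $0$ and $1$, and $\int_0^1(g(u)-u)\,du>0$ (possible thanks to~\eqref{ass:integrale}). Lemma~\ref{lem:tw-monotone2}$(i)$ and Remark~\ref{REM1}, applied with shift $h=c\tau>0$, produce a monotone bistable wave $(c_g,U_g)$ strictly increasing from $0$ to $1$, with $c_g>0$. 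One then slides the translates $U_g(\cdot-s)$ against $U$: for $s\ll 0$ clearly $U_g(\cdot-s)\leq U$ on any compact set, while as $s\to+\infty$ the translate eventually exceeds the upper bound $U(\sigma_*)\leq 1=U_g(+\infty)$. At the first touching value of $s$, using $g\leq f$ and the strict monotonicity of $U$ on $(-\infty,\sigma_*]$, the difference $U-U_g(\cdot-s)$ should satisfy a differential inequality to which the strong maximum principle applies, yielding a contradiction. The hardest step is precisely making this sliding rigorous in spite of $U$ and $U_g$ solving \emph{different} nonlocal equations; the stability statement Lemma~\ref{lem:tw-monotone2}$(ii)$, combined with a parabolic comparison relying on the monotony of $g$, will be used to bridge that gap.

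Finally, for the structural claims at $\sigma_{**}$, set $\sigma_{**}:=\inf\{x>\sigma_*:U'(x)=0\}\in(\sigma_*,+\infty]$, which is well defined because $U''(\sigma_*)\leq 0$ and the maximality of $\sigma_*$ force $U'<0$ just to the right of $\sigma_*$. Assume $\sigma_{**}<+\infty$. The symmetric local test at $\sigma_{**}$ gives $U''(\sigma_{**})\geq 0$, and thus
\[
f(U(\sigma_{**}-h))\;\leq\;U(\sigma_{**}).
\]
I would then perform a case analysis on the position of $\sigma_{**}-h$ relative to $\sigma_*$. In the case $\sigma_{**}-h\geq\sigma_*$, strict monotone decrease on $[\sigma_*,\sigma_{**}]$ gives $U(\sigma_{**}-h)>U(\sigma_{**})$, and if $U(\sigma_{**}-h)\leq 1$ were to hold, the sign of $f-\mathrm{id}$ on $(\theta,1]$ combined with the equation bound would entail $U(\sigma_{**})\geq U(\sigma_{**}-h)$, a contradiction; in the case $\sigma_{**}-h<\sigma_*$ (i.e.\ $\sigma_{**}<\sigma_*+h$), a similar compatibility check, using $U>\theta$ on $(0,\infty)$ from~\eqref{propri-wave} together with Assumption~\ref{ass:f}, excludes $U(\sigma_{**}-h)\leq 1$ as well. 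This forces $U(\sigma_{**}-h)>1$, whence $U(\sigma_{**})<1$ follows from the same equation bound combined with the already established $U(\sigma_*)>1$ and the strict monotone decrease on $[\sigma_*,\sigma_{**}]$. The very last assertion, $U'>0$ on $(\sigma_{**},\sigma_*+h]$ when $\sigma_{**}<\sigma_*+h$, is obtained by the same local extremum technique as in Lemma~\ref{lem:estim_mono}: on this interval the delayed argument $\cdot-h$ lies in $(\sigma_{**}-h,\sigma_*]$, a region of strict monotone increase of $U$, so a hypothetical critical point $x_0$ with $U''(x_0)\leq 0$ would yield $f(U(x_0-h))\geq U(x_0)$ with $U(x_0-h)$ too large for this inequality to be compatible with the shape of $f$.
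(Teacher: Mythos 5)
Part $(i)$ of your proposal is correct and matches the paper's (equally terse) argument.

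For part $(ii)$ there is a genuine gap in the crucial step, namely the proof that $U(\sigma_*)>1$. You sketch a sliding argument comparing $U$ with the monotone wave $U_g$ from Lemma~\ref{lem:tw-monotone2}, and you explicitly concede that making it rigorous in spite of $U$ and $U_g$ solving different (nonlocal) equations is the hardest step. That concession is precisely where the argument fails: at a hypothetical touching point $x^*$ of $U$ and $U_g(\cdot-s)$, the two delayed terms $f(U(x^*-h))$ and $g(U_g(x^*-s-h))$ cannot be compared because $f$ is not monotone, and the ordering $U\geq U_g(\cdot-s)$ at the delayed argument is not available without already knowing something about the shape of $U$ beyond its increasing part. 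The paper circumvents this by a completely different route: a sliding argument is performed \emph{on the box solutions} $u_{a_n}$, exploiting that the delayed argument there lies in the range where $f'>0$ (Claim~\ref{claim8}); this passes to the limit to show $U'>0$ on $(-\infty,x_\beta)$ where $U(x_\beta)=\beta$ (Claim~\ref{claim7}), giving $U(\sigma_*)\geq\beta$; then the contradiction assuming $U(\sigma_*)\leq 1$ is closed by first proving $\sigma_{**}-h>x_\beta$ (equation~\eqref{gap11}, via a maximum principle for $U'$) and then using the monotonicity of $f$ on $(\beta,1)$. Your proposal never establishes the key fact $U(\sigma_*)\geq \beta$, so even the later case analysis based on $\sigma_{**}-h > x_\beta$ is without foundation.

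The order of the remaining conclusions is also reversed, and this matters. You try to derive $U(\sigma_{**}-h)>1$ first and then claim $U(\sigma_{**})<1$ ``follows,'' but from $U(\sigma_{**}-h)>1$ and the evaluation $f(U(\sigma_{**}-h))\leq U(\sigma_{**})$ one only gets $U(\sigma_{**})\geq f(U(\sigma_{**}-h))$, which is some value below $1$, and this does not give an upper bound on $U(\sigma_{**})$. The paper first proves $U(\sigma_{**})<1$ (by introducing the next critical point $\sigma_{***}$ and the key gap~\eqref{gap12}, $\sigma_{***}-h>\sigma_*$, obtained from a convexity argument on $[\sigma_{**},\min(\sigma_{***},\sigma_*+h)]$ using $f'\leq 0$ on $(\beta,M_1)$), and only then deduces $U(\sigma_{**}-h)>1$ from $f(U(\sigma_{**}-h))\leq U(\sigma_{**})<1$ together with $U(\sigma_{**}-h)>\beta>\alpha$ and $f(\alpha)=1$. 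Your proposal never introduces $\sigma_{***}$ nor the convexity argument that underlies the last assertion $U'>0$ on $(\sigma_{**},\sigma_*+h]$; the sketch you give of that last point is too vague to be checkable. In short, the outer structure of your proposal mirrors the statement, but the two load-bearing ingredients of the paper's proof, the monotonicity of the box solutions up to the $\beta$-level and the $\sigma_{***}$/convexity argument, are missing, and their proposed replacements do not go through.
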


\begin{proof}
If $\sigma_* = +\infty$, then $U(+\infty)$ exists and it has to be equal to~1. Hence $(i)$ follows. 

We thus focus on the second statement and  assume $\sigma_*<+\infty$. Let us  recall that, from our construction, the wave profile $\left(c,U\right)$ considered in this subsection is given by
\begin{equation}\label{const}
c=\lim_{n\to\infty}c_{a_n}\; \text{ and } U(x)=\lim_{n\to\infty} u_{a_n}\left(x+x_{a_n}\right)\text{ locally uniformly in $\R$},
\end{equation}
wherein $a_n>0$ is a given sequence tending to $\infty$ as $n\to\infty$, $\left(c_{a_n},u_{a_n}\right)$ is a solution of problem $P_1(a_n)$ in a box with normalization \eqref{normalization} while $x_{a_n}>0$ is the leftmost point where $u_{a_n}\left(x_{a_n}\right)=\theta$ (see \eqref{def:xa}).
For each $n\geq 0$, let $x_\beta^n\in \left(x_{a_n},a_n\right)$ denote the leftmost point where $u_{a_n}(x_\beta^n)=\beta$.

\begin{claim}\label{claim8}
For all $n\geq 0$, we have
\begin{equation*}
u_{a_n}'(x)>0,\;\forall x\in (-a_n,x_\beta^n).
\end{equation*}
\end{claim}
\begin{proof}
We fix $n\geq 0$ and, for notational simplicity, we write $(c_n,u_n)$ and $x_n$ instead of, respectively, $\left(c_{a_n},u_{a_n}\right)$ and $x_{a_n}$. We use a sliding method: let us consider the real number $\xi^*\in [0,a_n+x_\beta^n)$ defined by
\begin{equation*}
\xi^*=\inf\left\{\xi\in [0,a_n+x_\beta^n]:\;u_n(x-\xi)<u_n(x),\;\forall x\in \left[-a_n+\zeta,x_\beta^n\right],\;\forall \zeta\in [\xi,a_n+x_\beta^n]\right\}.
\end{equation*}
Assume by contradiction $\xi^*\neq 0$.  Then there exists $x^*\in (-a_n+\xi^*,x_\beta^n]$ such that
\begin{equation*}
\begin{cases}
u_n(x-\xi^*)\leq u_n(x),\;\forall x\in \left[-a_n+\xi^*,x_\beta^n\right],\\
u_n(x^*-\xi^*)= u_n(x^*).
\end{cases}
\end{equation*}
Notice that $x^*$ cannot be equal to $x_\beta^n$ because $u_n (x_\beta^n - \xi^*) < \beta = u_n (x_\beta^n)$.

Consider now the function $w(x):=u_n(x)-u_n(x-\xi^*)$ and observe that it satisfies $w(x)\geq 0$ on $[-a_n+\xi^*,x_\beta^n]$, $w(x^*)=0$ as well as 
\begin{equation*}
-w''(x)+cw'(x)+w(x)=f\left(u_n(x-h)\right)-f\left(u_n(x-\xi^*-h)\right)\; \text{ for $x\in (-a_n+\xi^*,x_\beta^n)$}.
\end{equation*} 
Now observe that one has
$$ 
u_n(x-h)\geq u_n(x-\xi^*-h),\; \forall x\in \left[-a_n+\xi^*,x_\beta^n\right].
$$
Indeed, from the above, one already knows that 
$$
u_n(x-\xi^*-h)\leq u_n(x-h),\; \forall x\in \left[-a_n+\xi^*+h,x_\beta^n+h\right],
$$
whereas, if $x\in [-a_n+\xi^*,-a_n+\xi^*+h]$ then $u_n(x-\xi^*-h)=0$.
Finally, since $u_n(x-h)\leq \beta$ for $x\in [-a_n,x_\beta^n]$ and $f'>0$ on $(0,\beta)$, one obtains 
\begin{equation*}
\begin{cases}
-w''(x)+cw'(x)+w(x)\geq 0\; \text{ for $x\in (-a_n+\xi^*,x^n_{\beta})$},\\
w\geq 0\text{ and }w(x^*)=0.
\end{cases} 
\end{equation*}
Hence the strong comparison principle applies and ensures that
$$
u_n(x)=u_n(x-\xi^*),\;\forall x\in [-a_n+\xi^*,x_n^beta].
$$
Finally choosing $x=-a_n+\xi^*$ yields $0<u_n(-a_n+\xi^*)=u_n(-a_n)=0$, a contradiction. Hence $\xi ^*=0$, meaning that $u_n$ is increasing on $[-a_n,x_n^\beta]$. Hence $v_n:=u_n'\geq 0$ on $(-a_n,x_n^\beta)$, and $-v_n''+cv_n'+v_n=v_n(\cdot -h)f'(u_n(\cdot - h))\geq 0$ on $(-a_n,x_n^\beta)$, so that the strong maximum principle implies $u_n '>0$ on $(-a_n,x_n^\beta)$, thus completing the proof of Claim \ref{claim8}.
\end{proof}

\begin{claim}\label{claim7}
The function $U$ satisfies that there exists $x_\beta>0$ such that
\begin{equation*}
U(x_\beta)=\beta\text{ and }U(x)<\beta,\;\forall x\in (-\infty,x_\beta),
\end{equation*}
and $U'(x)>0$ for all $x<x_\beta $.
\end{claim}

\begin{proof}
{}From the previous step, let us first show that the sequence $\left\{x_\beta^n\right\}$ is bounded.
To that aim, we argue by contradiction by assuming that, up to a subsequence, $x_\beta^n\to\infty$. In that case, recalling \eqref{const}, one obtains that $U'(x)\geq 0$ and $U(x)\leq \beta<1$, for all $x\in\R$. This means that $U(x)\to\theta$ as $x\to\infty$, a contradiction with \eqref{propri-wave}.
Hence, $\left\{x_\beta^n\right\}$ is bounded and there exists $x_\beta>0$ such that $U(x_\beta)=\beta$, $U(x)\leq \beta$ for all $x\leq x_\beta$ and $U'(x)\geq 0$ for all $x\leq x_\beta$. Finally, the strong maximum principle applied to $U'$, as in the end of the above proof of Claim \ref{claim8}, ensures that $U'(x)>0$ for all $x\in (-\infty,x_\beta)$ and this completes the proof of Claim \ref{claim7}.
\end{proof}

The above claim ensures that $\sigma_*\geq x_\beta$.
Equipped with this key property, we are now able to complete the proof of Lemma \ref{LE-max-monotone}. First, assume by contradiction that $U (\sigma_*)\leq 1$. As in the end of the proof of Claim \ref{claim8}, observe that $U'\geq 0$ satisfies $-U'''+cU''+U'\geq 0$ on $(-\infty,\sigma _*)$ and $U'(\sigma*)=0$ so that the Hopf lemma implies $U''(\sigma _*)<0$.
Thus $\sigma_*$ is  a point of local maximum and there exists $\sigma_{**} \in ( \sigma_*,+\infty)$ such that $U' < 0$ in $(\sigma_*, \sigma_{**})$ and $U'(\sigma _{**})=0$, so that $U''(\sigma_{**})\geq 0$. Evaluating the equation at the point $\sigma_{**}$, we get 
\begin{equation}\label{eval**}
f (U (\sigma_{**} -h)) - U (\sigma_{**}) \leq 0.
\end{equation}
Let us now check that 
\begin{equation}\label{gap11}
\sigma_{**} -h  > x_\beta .
\end{equation}
Assume this is false. For all $x \in [\sigma_* , \sigma_{**}]$, 
$$-U''' (x)+ cU'' (x) = U' (x-h) f' (U(x-h))  - U'(x).$$
Since $x - h \leq x_\beta$, we have $U'(x-h) \geq 0$ and $f'(U(x-h)) \geq 0$, and by the definition of $\sigma_{**}$ we also have $U'(x) \leq 0$. Thus
$$-U''' + cU'' \geq 0$$
in the interval $[\sigma_*, \sigma_{**}]$. Since $U'(\sigma_*)= U'(\sigma_{**}) =0$, from the maximum principle we conclude that $U' \equiv 0$ in $[\sigma_*,\sigma_{**}]$, a contradiction. Now \eqref{gap11} is proved.

It follows that $\theta < U(\sigma_{**}- h ) \leq f(U (\sigma_{**} - h))$ and, from \eqref{eval**}, we infer $U (\sigma_{**}) \geq U(\sigma_{**} - h)$. This implies that $\sigma_{**}-h <  \sigma_*$. Thus $1 > U (\sigma_{**}) \geq U(\sigma_{**}-h) > \beta$ and, because $f$ decreases on the interval $(\beta,1)$, we get
$$f (U (\sigma_{**} -h)) \geq f (U (\sigma_{**}))>U(\sigma_{**}),$$
which contradicts the inequality~\eqref{eval**}. Thus $U(\sigma_*) >1$.

Let us now assume $\sigma _{**}<\infty$. Similarly as above, we define $\sigma_{***} \in (\sigma_{**},+\infty]$ such that $U' >0$ on $(\sigma_{**},\sigma_{***})$ and $U' (\sigma_{***}) = 0$ if $\sigma_{***}$ is finite. Let us check that
\begin{equation}\label{gap12}
\sigma_{***} -h  > \sigma_* .
\end{equation}
We only need to consider the case when $\sigma_{***} < \infty$ and $\sigma_{**} -h < \sigma_*$, and then we proceed similarly as above. For any $x\in [\sigma_{**},\overline{\sigma} := \min \{ \sigma_{***}, \sigma_* + h\}]$ we have $U'(x-h)\geq 0$, $U'(x)\geq 0$ and $f'(U(x-h))\leq 0$ since $ U(x-h)\geq \beta$. As a result,  $f(U(x - h))-U(x)$ is nonincreasing with respect to $x$ between $\sigma_{**}$ and $\overline \sigma$. As it is nonpositive at the point $\sigma_{**}$, it remains nonpositive on $[\sigma_{**},\overline{\sigma}]$. It follows that $U$ is convex on $[\sigma_{**},\overline \sigma]$, and thus $U' (\overline{\sigma}) >0$. Since $U'(\sigma_{***})=0$, this means that  $\overline{\sigma}=\sigma_*+h$ and \eqref{gap12} is proved. Note that the last part of Lemma~\ref{LE-max-monotone} $(ii)$ immediately follows.

Next we  prove that $U(\sigma_{**}) < 1$.  If not, then it is straightforward that $\sigma_{***}$ must be finite and $U(\sigma_{***}) >1$. In particular, it is a local maximum of $U$. Evaluating the equation at $\sigma_{***}$, we get 
$$f(U(\sigma_{***} -h)) \geq U(\sigma_{***}) \geq 1.$$
Moreover, here and from~\eqref{gap12} one must have $U(\sigma_{***}-h) >1$, and the previous inequality now implies that $U(\sigma_{***}-h) > U(\sigma_{***})$, both values being in the interval $[1, M_1= f(\beta)]$. Using the monotonicity of $f$ in this same interval, one gets
$$f(U(\sigma_{***}-h)) < f(U(\sigma_{***})) <1,$$
a contradiction.

Last, we check that $U(\sigma_{**} -h) >1$. Evaluating the equation at $\sigma_{**}$, we get that 
$$f(U(\sigma_{**} -h)) \leq U(\sigma_{**}) <1.$$
Hence either $U(\sigma_{**} -h) \in [0,\alpha]$, which contradicts~\eqref{gap11} and the fact that $\alpha < \beta$, or $U(\sigma_{**}-h) >1 $. This completes the proof of Lemma~\ref{LE-max-monotone}.
\end{proof}

{}From the above lemma, we can now trap the solution $U$ between $M_2$ and $M_1$, after $\sigma _*$.

\begin{lemma}[Bounds on $U$ after $\sigma _*$]\label{LE-localisation}
If $\sigma_*<+\infty$ then 
$
M_2\leq U(x)\leq M_1$ for all $ x\geq \sigma_*$.
\end{lemma}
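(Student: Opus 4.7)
The upper bound $U \leq M_1$ is immediate since Theorem \ref{th:construction} yields $U < M$ on $\R$, and under Assumption \ref{ass:f-osc} one has $M = f(\beta) = M_1$. All the work is in the lower bound $U \geq M_2$ on $[\sigma_*, \infty)$.

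My plan is to set $m := \inf_{x \geq \sigma_*} U(x)$ and derive a contradiction assuming $m < M_2$. Note that $\liminf_{x\to+\infty}U > \theta$ together with $U > \theta$ on $[h,\infty)$ gives $m > \theta$, while $U(\sigma_*) > 1$ (from Lemma \ref{LE-max-monotone}) guarantees $m < 1$. The key analytic input will be the shape of $f$: since $f$ is increasing on $[0,\beta]$ from $0$ to $M_1$ and decreasing on $[\beta, M_1]$ from $M_1$ down to $M_2$, the sublevel set $\{u \in [0,M_1] : f(u) \leq m\}$ is an interval $[0,u_1]$ with $u_1 < \alpha$ (strict because $m < M_2 < 1 = f(\alpha)$). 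So at any point $x_0$ with $U(x_0) = m$ and $U''(x_0) \geq 0$, the travelling wave equation forces $U(x_0 - h) \leq u_1 < \alpha$.

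I then split on whether $m$ is attained. If it is attained at some $x_0 \in [\sigma_*, \infty)$, then $x_0 > \sigma_*$ (since $U(\sigma_*) > 1 > m$) so that $x_0$ is an interior local minimum and the inequality above holds. I then distinguish two subcases according to the location of $x_0 - h$. If $x_0 - h \geq \sigma_*$, the definition of $m$ gives $m \leq U(x_0 - h) \leq u_1$, and monotonicity of $f$ on $[0,\beta]$ yields $f(m) \leq f(u_1) = m$; this contradicts $f(u) > u$ on $(\theta, 1)$. If instead $x_0 - h < \sigma_*$, then $x_0 \in (\sigma_*, \sigma_* + h)$, and Lemma \ref{LE-max-monotone} (together with the fact that $U' < 0$ on $(\sigma_*, \sigma_{**})$ and $U' > 0$ on $(\sigma_{**}, \sigma_*+h]$) forces $x_0 = \sigma_{**}$; but Lemma \ref{LE-max-monotone} also guarantees $U(\sigma_{**} - h) > 1$, and since $f$ is decreasing on $[\beta, M_1]$ with $f(M_1) = M_2$, this gives $U(\sigma_{**}) \geq f(U(\sigma_{**} - h)) \geq M_2$, contradicting $U(x_0) < M_2$.

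If $m$ is not attained, I take $x_n \to \infty$ with $U(x_n) \to m$ and use interior elliptic estimates to extract a $C^2_{\mathrm{loc}}$ limit $U_\infty$ of $U(\cdot + x_n)$ solving the same equation, with $U_\infty \geq m$ on $\R$ and $U_\infty(0) = m$. Then $0$ is a global minimum of $U_\infty$, and since $x_n - h \to \infty$ as well one also has $U_\infty(-h) \geq m$; combined with $f(U_\infty(-h)) \leq m$ obtained from the equation, the same algebra $f(m) \leq m$ on $(\theta,1)$ gives the contradiction. The main structural hinge is the subcase $x_0 - h < \sigma_*$ of Case 1: there I must use Lemma \ref{LE-max-monotone} to identify the only candidate local minimum in $(\sigma_*, \sigma_* + h)$ as $\sigma_{**}$ and to obtain the crucial control $U(\sigma_{**}-h) > 1$; once that is in hand, everything else is routine.
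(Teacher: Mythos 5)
Your proposal is correct, but it takes a genuinely different route from the paper's. The paper argues by contradiction around the \emph{leftmost} local minimum $t_0 > \sigma_*$ with $U(t_0) < M_2$: it shows $t_0 > \sigma_{**}$ and $t_0 - h > \sigma_*$ via Lemma~\ref{LE-max-monotone}, derives $U(t_0 - h) < f(U(t_0-h)) \leq U(t_0)$, and then observes this is incompatible with the minimality of $t_0$ (the argmin of $U$ on $[\sigma_*, t_0]$ would be an interior local min with value below $M_2$ strictly to the left of $t_0$). To handle the possibility that $U$ has no local minimum beyond $\sigma_*$, the paper notes that $U$ then eventually decreases monotonically and must converge to a fixed point of $f - \mathrm{id}$ in $(\theta, M_2)$, which doesn't exist. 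Your argument instead works with the global infimum $m = \inf_{[\sigma_*, \infty)} U$, reducing to the clean algebraic contradiction $f(m) \leq m$ with $m \in (\theta, 1)$ once one knows $m \leq U(\cdot - h)$ at the minimizing point; the subcase $x_0 - h < \sigma_*$ is handled, like in the paper, by identifying $x_0 = \sigma_{**}$ and invoking $U(\sigma_{**}-h) > 1$ from Lemma~\ref{LE-max-monotone}. The trade-off is that your approach requires a translation/elliptic-estimates limit argument for the case where $m$ is not attained, which the paper sidesteps entirely with the eventually-monotone observation; on the other hand, your $f(m) \leq m$ mechanism is a bit more transparent than the paper's ``$U(t_0 - h) < U(t_0)$ contradicts the definition of $t_0$,'' which requires the reader to unpack an implicit argmin argument. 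One minor nit: the sentence ``$U(\sigma_*) > 1$ guarantees $m < 1$'' is not a deduction but simply a restatement of the contradiction hypothesis $m < M_2 < 1$; this does not affect the correctness of the proof.
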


\begin{proof}
Note that the upper bound has already been stated above. For the lower bound, proceed  by contradiction and assume that $U(x_0)< M_2$ for some $x_0 > \sigma_*$. Without loss of generality, $x_0$ may be the leftmost point in $(\sigma_*,+\infty)$ where $U$ reaches the value $U(x_0)$, so that $U' (x_0) \leq 0$. Then either $U' \leq 0$ on $[x_0 , +\infty)$, or $U$ has a local minimum with value strictly smaller than $M_2$ in $[x_0,+\infty)$. In the former case, the limit $U(+\infty)$ exists and satisfies $f(U(+\infty))= U(+\infty)$, together with $\theta <U(+\infty)<M_2<1$, which is impossible. Then consider the latter case, and denote by $t_0$ the leftmost local minimum point in $(\sigma_*,+\infty)$ (and therefore in $\mathbb{R}$) such that $U(t_0)<M_2$. Using the notation of the previous lemma, this means that $\sigma_{**}<\infty$ and $t_0\geq \sigma_{**}$ while
$$
\theta<U(t_0)<M_2,\;U'(t_0)=0\text{ and }U''(t_0)\geq 0.
$$
Plugging this information into the wave equation yields $f\left(U(t_0 -h)\right)\leq U(t_0)$.  Together with Assumption~\ref{ass:f-osc} this ensures that $U(t_0-h)\leq M_2$. Also, from the second statement of Lemma \ref{LE-max-monotone}, one must have $t_0 > \sigma_{**}$ and $t_0 -h > \sigma_*$, so that $U(t_0-h)>\theta$. From Assumption \ref{ass:f-osc}, we deduce
\begin{equation*}
U(t_0-h)<f\left(U(t_0 -h)\right)\leq U(t_0).
\end{equation*}
Since $t_0 -h > \sigma_*$, the above inequality contradicts the definition of $t_0$. This completes the proof of Lemma \ref{LE-localisation}. 
\end{proof}

{}From the above lemmas we now provide a sufficient condition, which involves the wave speed~$c$ and the delay parameter $\tau$, ensuring that the wave profile $U$ does not converge to $1$ at $x \to + \infty$. This result is related to the location of the complex roots of the function $\Delta  ^{\mu} (\cdot;h;\tau)$ defined by
\begin{equation}\label{def-Delta}
\Delta^\mu(\lambda;h;\tau):=\frac{\lambda^2}{h^2}-\frac{\lambda}{\tau}+\mu e^{-\lambda }-1, \; \lambda \in \C,
\end{equation}
where $\mu \in \R$, $h>0$, $\tau >0$ are parameters. Notice that \eqref{def-Delta} arises by plugging the ansatz $1+\ep e^{\frac \lambda h x}$ in the travelling wave equation and keeping the $\ep$ order terms, with $\mu=f'(1)$.

\begin{rem}\label{rem-prel} 
Notice that letting $\lambda=\tau z$, equation \eqref{def-Delta} is transferred into the equation of \cite[Lemma 17]{Trofimchuk}. As easily seen, for any parameters $\mu<0$, $h>0$, $\tau>0$, a solution $\lambda$ of $\Delta ^\mu (\lambda;h;\tau)=0$ which is not simple has to be real and is at most double. Also, from the computation in the proof of item (3) in \cite[Lemma 17]{Trofimchuk}, we have that if $\xi _1+i\xi_2$ is a solution then on the vertical line $\Re (\lambda)=\xi_1$ the only other possible solution is $\xi _1-i \xi _2$. As a result, for any parameters  $\mu<0$, $h>0$, $\tau>0$, we have that, for each $\xi_1 \in\R$, the equation $\Delta^\mu(\lambda;h;\tau)=0$ has at most two solutions (counting multiplicity) on the vertical line $\Re (\lambda)=\xi_1$.
\end{rem}

\begin{lemma}[Sufficient condition for not converging to 1]\label{LE-behaviour}
Setting $\mu=f'(1)<0$ and $h=c\tau$, if the equation $\Delta^\mu (\lambda;h;\tau)=0$ does not admit any solution in the complex strip $S_0$ then the wave profile $U$ does not converge to $1$ as $x \to + \infty$.
Here the strip $S_0$ is given by
\begin{equation}\label{strip}
S_0=\left\{\lambda \in\mathbb C : \Re(\lambda)\leq 0\text{ and }\Im(\lambda)\in\left[-2\pi,2\pi\right]\;\right\}.
\end{equation}
\end{lemma}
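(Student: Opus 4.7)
The plan is to argue by contradiction. Suppose $U(+\infty)=1$ and set $V(x):=U(x)-1$, so that $V(x)\to 0$ as $x\to+\infty$. Using the $C^{1,\gamma}$-regularity of $f$ near $1$ and $f(1)=1$, I would write
\begin{equation*}
f(1+V(x-h))-1=\mu V(x-h)+R(x),\qquad R(x)=O\bigl(|V(x-h)|^{1+\gamma}\bigr),
\end{equation*}
so that the travelling wave equation for $V$ becomes
\begin{equation*}
-V''+cV'+V-\mu V(\cdot-h)=R.
\end{equation*}
Plugging the exponential ansatz $V(x)=e^{\lambda x/h}$ in the associated homogeneous equation gives precisely the characteristic relation $-\Delta^{\mu}(\lambda;h;\tau)=0$; thus the admissible rates of decay of $V$ at $+\infty$ are controlled by the roots of $\Delta^\mu$ with $\Re(\lambda)\leq 0$.

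I would then set up a Laplace-transform analysis, in the spirit of \cite{Trofimchuk}. First one establishes a baseline exponential decay $|V(x)|\leq Ce^{-\eta_0 x}$ for some $\eta_0>0$ and $x$ large, by constructing an exponential supersolution based on a real number $\lambda_0$ chosen to the left of all real roots of $\Delta^\mu$ on $(-\infty,0]$. Then, for $x_0$ large enough, the transform
\begin{equation*}
\Phi(\lambda):=\int_{x_0}^{+\infty}e^{-\lambda x/h}V(x)\,dx
\end{equation*}
is holomorphic on $\{\Re\lambda>-\eta_0 h\}$. Multiplying the equation for $V$ by $e^{-\lambda x/h}$ and integrating by parts yields a functional identity
\begin{equation*}
-\Delta^\mu(\lambda;h;\tau)\,\Phi(\lambda)=E(\lambda)+\widehat R(\lambda),
\end{equation*}
where $E$ is an entire function (boundary and delay-initial contributions) and $\widehat R$ is the Laplace transform of $R$. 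Since $R=O(V^{1+\gamma})$ with $\gamma>0$, $\widehat R$ extends holomorphically to the strictly larger half-plane $\{\Re\lambda>-(1+\gamma)\eta_0 h\}$.

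Under the standing hypothesis that $\Delta^\mu$ has no zero in $S_0$, the identity extends $\Phi$ holomorphically to $S_0\cap\{\Re\lambda>-(1+\gamma)\eta_0 h\}$. A standard inverse Laplace / contour-shift argument, pushing the vertical contour leftwards across $\{\Re\lambda=0\}$ inside $S_0$, then upgrades the bound on $V$ to $|V(x)|\leq C'e^{-\eta_1 x}$ for some $\eta_1>\eta_0$. Iterating this bootstrap finitely many times, $V$ decays at $+\infty$ faster than any exponential. A bounded non-trivial solution of the perturbed linear delay equation cannot exhibit such super-exponential decay: a further spectral argument then forces $V\equiv 0$ on a half-line $[x_1,+\infty)$, after which backward propagation via the second order equation produces $V\equiv 0$ on $\R$, contradicting $U(-\infty)=0$.

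The main technical obstacle is the contour shift itself: one must control $\Phi(\lambda)$ along the horizontal sides $\Im\lambda=\pm 2\pi$ of $\partial S_0$ in order to shift the contour without generating uncontrolled contributions. This is exactly where the specific height $4\pi$ of $S_0$ and Remark \ref{rem-prel} play their role, the latter ensuring that at most two zeros of $\Delta^\mu$ lie on any given vertical line and — in the manner of \cite{Trofimchuk} — that their positions are sufficiently well-behaved that explicit bounds on $1/\Delta^\mu$ along horizontal segments are available. Turning this into a clean quantitative estimate that survives the bootstrap is the delicate heart of the proof.
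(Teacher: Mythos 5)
Your proposal takes a genuinely different route from the paper, and unfortunately it contains a gap that is fatal, not merely technical.

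The core difficulty is that the hypothesis of the lemma concerns only the strip $S_0 = \{\Re(\lambda)\leq 0,\ |\Im(\lambda)|\leq 2\pi\}$, \emph{not} a half-plane. The characteristic function $\Delta^\mu$ may very well have zeros with $\Re(\lambda)<0$ and $|\Im(\lambda)|>2\pi$, and nothing in the hypothesis excludes them. If such a zero $\lambda^*=\gamma+i\omega$, $\gamma<0$, $|\omega|>2\pi$, exists with $\gamma$ close to $0$, then $V=U-1$ could decay exactly like $e^{\gamma x/h}\cos(\omega x/h)$ — a bona fide exponential decay, not super-exponential. Your contour-shift bootstrap would then stall at the rate $\eta_1 = -\gamma/h$: when you try to push the contour past $\Re(\lambda)=\gamma$, the identity $-\Delta^\mu\Phi=E+\widehat R$ produces a genuine pole, and $\Phi$ does not extend. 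So the chain ``bootstrap $\Rightarrow$ super-exponential decay $\Rightarrow V\equiv 0$'' never gets off the ground. Restricting attention to $S_0$ does not help: holomorphic extension of a Laplace transform requires absence of singularities on whole vertical lines, not merely on the bounded segment $|\Im\lambda|\leq 2\pi$.

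This is precisely where the paper's argument diverges from yours. The paper does \emph{not} try to prove super-exponential decay; on the contrary, its Claim~\ref{claim0} establishes that the decay is \emph{not} super-exponential (via the discrete Lyapunov functional of Mallet-Paret–Sell, which bounds the number of sign changes ${\rm sc}(\Psi_t)\leq 2$). Non-super-exponential decay is then used to renormalize and extract a nonzero bounded solution of the linearized delay system, which — because it is not a ``small solution'' — must behave to leading order like an eigenfunction $e^{(-\gamma+i\omega)x/h}$ with $\Re(-\gamma+i\omega)\leq 0$. The hypothesis ``no roots in $S_0$'' then forces $|\omega|>2\pi$, i.e.\ the eigenfunction oscillates with period $<h$, which produces at least three sign changes on any interval $[t-h,t]$ and contradicts the bound ${\rm sc}\leq 2$. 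In short: the strip $S_0$ and its height $4\pi$ are there to generate a \emph{frequency} obstruction detected by sign-counting, not to create a zero-free half-plane for a Laplace-transform bootstrap. Your sketch uses Remark~\ref{rem-prel} only for boundary control of $\Phi$, which is not where that remark is needed. Finally, even granting the bootstrap, the closing ``further spectral argument forces $V\equiv 0$ on a half-line, then backward propagation gives $V\equiv 0$'' is essentially the small-solutions theory (Hale, Mallet-Paret), and the paper has to invoke it in a much more surgical way through the renormalization limit; stated as baldly as in your proposal it would need substantial justification.
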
 

\begin{proof}[Sketch of proof]
The proof of this result is similar to the one of  \cite[Theorem 3]{Trofimchuk} which deals with a KPP situation. Here we provide a sketch of proof using slightly different arguments.

First, under the hypothesis of the  lemma, one can reproduce the proof of \cite[Lemma 25]{Trofimchuk} and derive that $U$ cannot be eventually monotone, so that in particular $\sigma_*<\infty$.

Next, consider the functions $z_0(t)=U(t)-1$ and $z_1(t)=U'(t)$ and note that they satisfy the delayed cyclic system of equations
\begin{equation}\label{system-z}
\begin{cases}
z_0 ' (t)=z_1(t),\\
z_1'(t)=c z_1(t)+z_0(t)+1-f\left(z_0(t-h)+1\right) =: g(z_0 (t),z_1(t),z_0(t-h)).
\end{cases}
\end{equation} 
Let us observe that the above system exhibits a delayed cyclic structure with a positive feedback (see Mallet-Paret and Sell \cite{Malet-Paret}). In our situation, positive feedback means that the function $t\mapsto g(z_0(t),0,z_0(t-h))$ is nonnegative (resp. nonpositive) whenever $z_0 (t)$ and $z_0 (t-h)$ are both nonnegative (resp. nonpositive). This is true for all $t \geq \sigma_*+h$ due to Lemma~\ref{LE-localisation} which insures that $U(x)=z_0(x)+1$ is trapped between $M_2>\alpha$ and $M_1$ for all $x\geq \sigma_*$. 

In this positive feedback case, Mallet-Paret and Sell \cite{Malet-Paret} developed  a discrete Lyapunov functional that allows to control the number of sign changes of the solutions. 
We now recall the definition of the number of sign changes as proposed  in \cite{Malet-Paret}: for any function $\varphi\in C^0(\mathbb K)$, $\varphi \not \equiv 0$, with $\mathbb K:=[-h,0]\cup \{1\}$,  we define its number of sign changes, denoted by ${\rm sc}\,(\varphi)$, by
\begin{equation*}
{\rm sc}\,(\varphi):=\sup\left\{k\geq 0:\;\exists \left(t_j\right)_{j=0}^k\in\mathbb K:\;t_0<..<t_k,\;\;\varphi\left(t_j\right)\varphi\left(t_{j+1}\right)<0,\;\forall 0\leq j \leq k-1\right\},
\end{equation*}
and ${\rm sc}\,(\varphi)=0$ whenever $\varphi \geq 0$ or $\varphi\leq 0$. We now define the function $\Psi:t\in \R\mapsto \Psi_t\in C^0(\mathbb K)$ by
\begin{equation*}
\Psi_t(\theta)=\begin{cases} z_0(t+\theta)&\text{ if }\theta\in [-h,0],\\ z_1(t) &\text{ if }\theta=1.\end{cases}
\end{equation*}
If $\Psi _{t_0}\equiv 0$ for some $t_0$ then $U\equiv 1$ on $[t_0-h,t_0]$ and, using the equation, on $(-\infty,t_0]$, which is not. Hence, for any $t\in\R$, one has $\Psi_t\not\equiv 0$.  Lemma \ref{LE-max-monotone} then ensures, since $\sigma_*<\infty$, that ${\rm sc}\,\left(\Psi_{\sigma_*+h}\right)\leq 2$. Hence due to \cite{Malet-Paret}, this property is preserved in time, namely 
\begin{equation}\label{sign-changes}
{\rm sc}\,\left(\Psi_t\right)\leq 2,\;\text{ for all $t\geq \sigma_*+h$}.
\end{equation}

The rest of the proof is identical to the one of \cite[Theorem 3]{Trofimchuk} but, as mentioned above,  we propose a sketch of proof based on  different arguments, as those used in \cite{ducrot} to deal with a specific class of second order differential equations with distributed delay (namely a delayed gamma function like kernel). We argue by contradiction by assuming that $U(x)\to 1$ as $x\to+\infty$, or equivalently --- by elliptic estimates--- that $(z_0,z_1)(t)\to (0,0)$ as $t\to\infty$, where $(z_0,z_1)$ is the solution of \eqref{system-z}.

\begin{claim}\label{claim0}
The convergence $(z_0,z_1)(t)\to (0,0)$ as $t\to\infty$ is not super-exponential. Here recall that we say that this convergence is super-exponential if $e^{\kappa t}\left(z_0(t),z_1(t)\right)\to (0,0)$ as $t\to\infty$, for any $\kappa\in\R$.
\end{claim}

\begin{proof}  By adapting the proof of  \cite[Theorem 3.5]{ducrot} to the specific and simpler case of system \eqref{system-z}, one obtains  that if $(z_0,z_1)(t)$ has a super-exponential decay to $0$ as $t\to\infty$, then ${\rm sc}\,\left(\Psi_t\right)\to \infty$ as $t\to\infty$. This is in contradiction with \eqref{sign-changes}. 
\end{proof}

Hence, the function $t\mapsto \|\Psi_t\|$ converges to $0$ as $t\to\infty$ but this convergence is not super-exponential. From \cite[Corollary 24]{Trofimchuk} we infer that, if we fix $\rho>3h$, there exist some constant $D>1$ and a sequence $\{t_j\}_{j\geq 0}$ going to $\infty$ as $j\to \infty$ such that, for all $j\geq 0$,
\begin{equation*}
\|\Psi_{t_j}\|=\sup_{t\geq t_j}\|\Psi_t\|\, \text{ and }\sup_{t_j-\rho\leq t\leq t_j}\|\Psi_t\|\leq D \|\Psi_{t_j}\|.
\end{equation*}
Herein the symbol $\|.\|$ denotes the sup-norm in $C^0\left(\mathbb K\right)$.

Next we consider the sequence of functions $\left(y_{0,j},y_{1,j}\right)$, for $j\geq 0$, defined by
\begin{equation*}
y_{k,j}(t)=\frac{z_k(t+t_j)}{M_j},\;t\in\R,\,k=0,1\text{ with }M_j:=\|\Psi_{t_j}\|.
\end{equation*}	
Observe that the above properties imply
\begin{equation}\label{estim-ykj}
\vert y_{k,j}(t)\vert \leq D, \quad \forall k=0,1, \forall j\geq 0, \forall t \geq -3h.
\end{equation}
Moreover, for $j\geq 0$ and $t\in \R$, we define  
 $\Psi_t^j\in C^0(\mathbb K)$ by
\begin{equation*}
\Psi_t^j(\theta):=\begin{cases} y_{0,j}(t+\theta)&\text{ if }\theta\in [-h,0],\\ y_{1,j}(t) &\text{ if }\theta=1.\end{cases}
\end{equation*}
Hence, for all $j\geq 0$, there exists $\theta_j\in\mathbb K$ such that $|\Psi_0^j(\theta_j)|=1$, and
\begin{equation}\label{esti-osci-encore}
{\rm sc}\,\left(\Psi_t^j\right)\leq 2,\;\forall t\geq \sigma_*+h-t_j,\;\forall j\geq 0.
\end{equation}
Also, $\left(y_{0,j},y_{1,j}\right)$ satisfies the system
\begin{equation}\label{system-y}
\begin{cases}
y_{0,j} ' (t)=y_{1,j}(t),\\
y_{1,j}'(t)=c y_{1,j}(t)+y_{0,j}(t)+\widehat f\left(z_{0}(t+t_j-h)\right)y_{0,j}(t-h),
\end{cases}
\end{equation} 
wherein we have set 
\begin{equation*}
\widehat f(z)=\begin{cases}z^{-1}\left(1-f\left(z+1\right)\right)&\text{ if $z\neq 0$}\\ -f'(1) &\text{ if $z=0$}.\end{cases}
\end{equation*}
Hence, from \eqref{estim-ykj}, the family of functions $\{(y_{0,j},y_{1,j})\}_{j\geq 0}$ is uniformly bounded in $C^1([-2h,\infty))$ and, possibly along a sub-sequence, one may assume that
\begin{equation*}
(y_{0,j},y_{1,j})(t)\to \left(y_{0,*},y_{1,*}\right)(t)\text{ locally uniformly for $t\in [-2h,\infty)$ as $j\to\infty$},
\end{equation*}
and $(y_{0,*},y_{1,*})$ is a bounded solution of the problem
\begin{equation}\label{system-y*}
\begin{cases}
y_{0,*} ' (t)=y_{1,*}(t),\\
y_{1,*}'(t)=c y_{1,*}(t)+y_{0,*}(t)-f'\left(1\right)y_{0,*}(t-h),
\end{cases} t\geq -h.
\end{equation}
Since $|\Psi_0^j(\theta_j)|=1$, for all $j\geq 0$, one also knows that $\sup_{\theta\in [-h,0]}|y_{0,*}(\theta)|+|y_{1,*}(0)|\neq 0$. As a consequence of  \cite[Theorem 3.1]{Hale}, one derives that $\left(y_{0,*},y_{*,1}\right)$ is not a small solution, in the sense that this function does not super-exponentially converge to $0$ as $t\to\infty$.

As a consequence, due to the results in \cite{MP},  one derives that, for $\nu>0$  sufficiently large, it holds that
\begin{equation*}
\begin{pmatrix}
y_{0,*}\\y_{1,*}
\end{pmatrix}(t)=Y(t)+O\left(e^{-\nu t}\right),
\end{equation*}
where $Y(t)$ is a nonzero finite sum of eigenfunctions of the linear equation \eqref{system-y*} associated to eigenvalues in the strip $\left\{\lambda\in\mathbb C:\;\Re(\lambda)\in (-\nu,0]\right\}$.  Hence, using Remark \ref{rem-prel}, we deduce that 
\begin{equation*}
y_{0,*}(t)=Ae^{-\frac{\gamma}{h} t}\left[\cos\left(\frac{\omega}{h} t+\varphi \right)+o(1)\right],
\end{equation*}
where $A\in\R\setminus\{0\}$, $\varphi \in \R$, $\gamma\geq 0$, $\omega\in\R$ are constants such that $-\gamma+i\omega$ is a solution of ~\eqref{def-Delta}.
Finally, recalling that $y_{0,j}$ converges to $y_{0,*}$ as $j\to\infty$, the assumption of Lemma \ref{LE-behaviour} on the location of the eigenvalues, namely $|\omega|>2\pi$, ensures that $y_{0,j}$ crosses $0$ at least three times on each interval of the form $[t-h,t]$ when $t$ is large enough and $j$ large enough. This contradicts \eqref{esti-osci-encore} and completes the proof of Lemma \ref{LE-behaviour}.
\end{proof}

\begin{rem}\label{rem:wavetrain} At this point and under a slightly stronger hypothesis than Assumption \ref{ass:f-osc}, we can actually prove the convergence of the wave to a wavetrain as $x\to+\infty$. This is postponed to subsection \ref{ss:wavetrain}.
\end{rem}

The completion of the proof  of Theorem \ref{th:large-delay} now reduces to checking that the assumption of Lemma \ref{LE-behaviour}, about the location of the roots of the function $\Delta ^{\mu} (\cdot;h;\tau)$, is satisfied. We start with the following.

\begin{lemma}[On the roots of $\Delta ^{\mu} (\cdot;h;\tau)$]\label{LE-location}
Recall that we have set $\mu =f'(1)$ and that the strip $S_0$ was defined in \eqref{strip}. We assume that $\mu<-1$ and define
\begin{equation*}
\kappa_\mu :=\inf_{\lambda\in S_0 \setminus \{0 \} }\left|\frac{1-\mu e^{-\lambda}}{\lambda^2}\right|.
\end{equation*}
Then $\kappa_\mu>0$ and, for each $\varepsilon>0$, there exists $\tau_\varepsilon>0$ large enough such that the following holds true:
for all $(h,\tau)\in (0,\infty)\times(0,\infty)$ one has
\begin{equation*}
\begin{cases}
\tau>\tau_\varepsilon,\\
h\geq h_\mu+\varepsilon
\end{cases}
\Rightarrow
\left\{\lambda\in\mathbb C:\;\Delta^\mu \left(\lambda;h;\tau\right)=0\right\}\cap S_0=\emptyset.
\end{equation*}
Herein we have set $h_\mu=\left(\kappa_\mu\right)^{-\frac{1}{2}}$.
\end{lemma}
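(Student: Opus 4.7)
The plan is to establish $\kappa_\mu>0$ by a compactness argument applied to the auxiliary function
$$F(\lambda):=\frac{1-\mu e^{-\lambda}}{\lambda^2},\quad \lambda\in S_0\setminus\{0\},$$
and then to exploit the equivalence of $\Delta^\mu(\lambda;h;\tau)=0$ (for $\lambda\neq 0$) with the identity $F(\lambda)=\frac{1}{h^2}-\frac{1}{\tau\lambda}$, treating the last term as a perturbation that vanishes as $\tau\to\infty$.

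For the positivity of $\kappa_\mu$, I would first observe that $F$ does not vanish on $S_0$: indeed, $\mu e^{-\lambda}=1$ with $\mu<-1$ forces $e^{-\lambda}$ to be a negative real number, hence $\Im\lambda$ must be an odd multiple of $\pi$ and $\Re\lambda=\ln|\mu|>0$, which lies outside $S_0$. Next, $|F(\lambda)|\to+\infty$ at both ``ends'' of $S_0$: as $\lambda\to 0$ the numerator tends to $1-\mu>2$ while the denominator vanishes, and as $\Re\lambda\to-\infty$ (with $|\Im\lambda|\leq 2\pi$) the numerator grows exponentially like $|\mu|e^{-\Re\lambda}$ while $|\lambda|^2$ grows only polynomially. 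A standard compactness argument on the remaining closed and bounded subset of $S_0$ then yields $\kappa_\mu>0$.

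Now fix $\varepsilon>0$ and suppose by contradiction that $\Delta^\mu(\lambda;h;\tau)=0$ for some $\lambda\in S_0$ with $\tau$ large and $h\geq h_\mu+\varepsilon$. Since $\Delta^\mu(0;h;\tau)=\mu-1\neq 0$ we have $\lambda\neq 0$, so the identity $F(\lambda)=\frac{1}{h^2}-\frac{1}{\tau\lambda}$ holds. Using $|F(\lambda)|\geq \kappa_\mu$ together with $\frac{1}{h^2}\leq\frac{1}{(h_\mu+\varepsilon)^2}<\kappa_\mu$, the reverse triangle inequality yields
$$\frac{1}{\tau|\lambda|}=\left|F(\lambda)-\frac{1}{h^2}\right|\geq |F(\lambda)|-\frac{1}{h^2}\geq \delta_\varepsilon,$$
where $\delta_\varepsilon:=\kappa_\mu-\frac{1}{(h_\mu+\varepsilon)^2}>0$. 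In particular $|\lambda|\leq \frac{1}{\tau\delta_\varepsilon}$, which is as small as desired for $\tau$ large.

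The contradiction then comes from the blow-up of $F$ near the origin. By a Taylor expansion of $\mu e^{-\lambda}$ about $\lambda=0$, there exists $r_0=r_0(\mu)>0$ such that $|1-\mu e^{-\lambda}|\geq (1-\mu)/2$ for every $\lambda\in\C$ with $|\lambda|\leq r_0$. Choosing $\tau_\varepsilon$ so large that $1/(\tau_\varepsilon\delta_\varepsilon)\leq r_0$, any purported root $\lambda$ lies in this regime and thus $|F(\lambda)|\geq (1-\mu)/(2|\lambda|^2)$. Combining with the upper bound $|F(\lambda)|\leq \frac{1}{h^2}+\frac{1}{\tau|\lambda|}\leq \kappa_\mu+\frac{1}{\tau|\lambda|}$, then multiplying by $|\lambda|^2$ and using $|\lambda|\leq 1/(\tau\delta_\varepsilon)$, one obtains
$$\frac{1-\mu}{2}\leq \kappa_\mu|\lambda|^2+\frac{|\lambda|}{\tau}\leq \frac{\kappa_\mu}{\tau^2\delta_\varepsilon^2}+\frac{1}{\tau^2\delta_\varepsilon},$$
which fails once $\tau_\varepsilon$ is enlarged further. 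The main delicate point is that the perturbation $1/(\tau\lambda)$ is itself singular at $\lambda=0$, so it cannot simply be treated as ``small'' uniformly in $\lambda$; what saves the argument is the scaling mismatch, since $|F|$ blows up like $1/|\lambda|^2$ near the origin while the perturbation is only of order $1/|\lambda|$.
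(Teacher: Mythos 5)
Your proposal is correct, and it takes a genuinely different route from the paper's. The paper argues by contradiction along sequences $\tau_n\to\infty$, $h_n\geq h_\mu+\varepsilon$, $\lambda_n\in S_0$, first extracting boundedness of $\{\lambda_n\}$ and then of $\{h_n\}$ directly from the dispersion relation, passing to subsequential limits $\lambda_n\to\lambda$, $h_n\to h$, and landing on the exact identity $1/h^2=F(\lambda)$, whence $\kappa_\mu\leq 1/h^2$ and $h\leq h_\mu$, contradicting $h\geq h_\mu+\varepsilon$. Your argument avoids all extraction of subsequences: from $F(\lambda)=\tfrac{1}{h^2}-\tfrac{1}{\tau\lambda}$, the bound $|F|\geq\kappa_\mu$ combined with $1/h^2<\kappa_\mu$ pins $|\lambda|\leq 1/(\tau\delta_\varepsilon)$ quantitatively, and then the $1/|\lambda|^2$ blow-up of $F$ near the origin overwhelms the $1/|\lambda|$ singularity of the perturbation, producing an explicit contradiction. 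What each buys: the paper's sequential argument is softer and shorter to state, deferring all estimates to the limit; yours yields an explicit, in principle computable threshold $\tau_\varepsilon$ and isolates the key scaling mismatch, which makes the mechanism of the lemma more transparent. You also supply a full proof that $\kappa_\mu>0$ (nonvanishing on $S_0$, blow-up at both ends, compactness on the remaining region), where the paper only asserts this "easily follows from $\mu<-1$"; your compactness argument also shows the infimum is attained, a fact the paper uses later without proof. One small point worth noting in a cleaned-up write-up: the chain $1/h^2\leq 1/(h_\mu+\varepsilon)^2<\kappa_\mu\leq|F(\lambda)|$ is what legitimizes applying the reverse triangle inequality in the form $|F(\lambda)-1/h^2|\geq|F(\lambda)|-1/h^2$, and you should display this explicitly.
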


\begin{proof} The positivity of $\kappa_\mu$ easily follows from the fact that $\mu < -1$. Now to prove the  lemma, we fix $\varepsilon>0$ and let us argue by contradiction by assuming that there exist three sequences $\tau_n\to \infty$, $h_n\geq h_\mu+\varepsilon$ and $\lambda_n\in S_0$ such that
 $\Delta^\mu \left(\lambda_n;h_n;\tau_n\right)=0$. This re-writes as
\begin{equation}\label{pq}
e^{\lambda_n}\left[\frac{\lambda_n^2}{h_n^2}-\frac{\lambda_n}{\tau_n}\right]+\mu-e^{\lambda_n }=0,\;\forall n\geq 0.
\end{equation}
Let us first observe that, since $\mu\neq 0$, the sequence $\{\lambda_n\}$ is bounded. Indeed, first note that, since $\mu<-1$, $\lambda_n\neq 0$. Also note that the above equality re-writes as
\begin{equation*}
\mu\frac{e^{-\lambda_n }}{\lambda_n^2}=\frac{1}{\lambda_n^2}-\frac{1}{h_n^2}+\frac{1}{\lambda_n\tau_n},\;\forall n\geq 0,
\end{equation*}
that ensures that the sequence $\{\lambda_n\}$ is bounded.
Hence, up to a subsequence that is still denoted by $\{\lambda_n\}$, one may assume that $\lambda_n\to \lambda\in S_0$.
Now, note that the sequence $\{h_n\}$ is also bounded. Indeed, if it were unbounded then, up to a subsequence one may assume that $h_n\to\infty$ and, passing to the limit $n\to\infty$ into \eqref{pq} yields
\begin{equation*}
e^\lambda=\mu.
\end{equation*}
However, since $\mu<-1$ and $\lambda\in S_0$ the above equality cannot hold true. Hence the sequence $\{h_n\}$ is bounded and, up to a subsequence, one may assume that $h_n\to h\in \left[h_\mu+\varepsilon,\infty\right)$. Next passing to the limit $n\to\infty$ into \eqref{pq} yields
\begin{equation*}
e^{\lambda}\frac{\lambda^2}{h^2}+\mu -e^{\lambda}=0.
\end{equation*}
Let us also observe that, since $\mu < -1$ then $\lambda\neq 0$, so that
\begin{equation*}
\frac{1}{h^2}\in \Gamma\left(S_0\setminus\{0\}\right)\text{ with }\Gamma(\lambda)=\left|\frac{1-\mu e^{-\lambda}}{\lambda^2}\right|.
\end{equation*}
This means $h^{-2}\geq \kappa_\mu$, that is $h\leq h_\mu$, a contradiction with $h\geq h_\mu+\varepsilon$, that completes the proof of the lemma.
\end{proof}

Now we provide a necessary condition, using the value $h_\mu$ defined above, for the wave profile to converge to $1$ at $+\infty$ even when the delay $\tau$ becomes large. 
In order to state this, let
$g:[0,M_1]\to\R$ be a nondecreasing and $C^1$ function such that
\begin{equation*}
\begin{cases}
g(u)\leq f(u)\text{ for all $u\in [0,M_1]$},\\
\text{the function $G(u):=g(u)-u$ has exactly three zero $0<\theta^*<\gamma$ where $\theta<\theta^*$ and $\gamma <M_2$},\\
G'(0)<0,\; G'(\theta ^*)>0,\; G'(\gamma)<0\text{ and }
\int_0^\gamma G(u)du>0.
\end{cases}
\end{equation*}
Note that such a choice is possible due to Assumption \ref{ass:f-osc} $(ii)$. Due to Lemma \ref{lem:tw-monotone2} $(i)$, for each $h\in\R$, the nonlocal parabolic problem
\begin{equation}
\left(\partial_t -\partial_{xx}\right)w(t,x)=g\left(w(t,x-h)\right)-w(t,x),
\end{equation}
admits a travelling wave solution $\left(c_h,U_h\right)$ such that
$$
U_h'>0\text{ and }U_h(-\infty)=0,\;U_h(+\infty)=\gamma.
$$
Using this notation, our necessary condition reads as follows.

\begin{lemma}[Necessary condition for convergence to 1, even when the delay becomes large]\label{LE-conv} Assume $\mu=f'(1)<-1$.  Consider a sequence $\tau_n\to\infty$. Let us assume that the sequence of travelling wave solutions $\left(c_n,u_n\right)$ of \eqref{eq} with $\tau=\tau_n$ (as constructed in the proof of Theorem~\ref{th:construction}) satisfies
\begin{equation*}
\lim_{x\to +\infty} u_n(x)=1,\;\forall n\geq 0.
\end{equation*}
Then the sequence of positive numbers $h_n:=c_n\tau_n$ satisfies 
\begin{equation}\label{truc}
\limsup_{n\to\infty} h_n\leq h_\mu,
\end{equation}
and,  if $h\in [0,h_\mu]$ denotes an accumulation point of $\{h_n\}$ then it holds that
\begin{equation*}
c_h\leq 0.
\end{equation*}
\end{lemma}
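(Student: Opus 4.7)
The plan is to handle the two conclusions separately: the first is an immediate consequence of Lemmas~\ref{LE-behaviour} and \ref{LE-location}, while the second rests on a parabolic comparison between $u_n$ and the monotone shifted problem associated with $g$, followed by a passage to the limit. For the upper bound on $\limsup h_n$, I argue by contrapositive. Since $u_n(+\infty)=1$ for every $n$, Lemma~\ref{LE-behaviour} (with $\mu=f'(1)<-1$) forces the existence of a root of $\Delta^\mu(\lambda; h_n;\tau_n)=0$ in the strip $S_0$. Given any $\varepsilon>0$, Lemma~\ref{LE-location} provides some $\tau_\varepsilon$ such that the combination $\tau_n>\tau_\varepsilon$ and $h_n\geq h_\mu+\varepsilon$ would rule out such a root. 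Since $\tau_n\to\infty$, one must therefore have $h_n<h_\mu+\varepsilon$ for all $n$ large, and letting $\varepsilon\to 0^+$ yields $\limsup h_n \leq h_\mu$.

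For the sign of $c_h$, I first note that $\{h_n\}$ being bounded and $\tau_n\to\infty$ entail $c_n=h_n/\tau_n\to 0$, so it is enough to establish the comparison $c_{h_n}\leq c_n$ for all $n$. The idea is to lift $u_n$ to $V_n(t,x):=u_n(x+c_n t)$, a solution of the delayed equation. Since $g\leq f$ on $[0,M_1]$, $V_n$ is a supersolution of the monotone shifted problem
\begin{equation*}
\partial_t w -\partial_{xx} w = g\bigl(w(t-\tau_n,x)\bigr) -w(t,x).
\end{equation*}
Choosing $\theta^*<\gamma_0<\gamma$ and, using $u_n(+\infty)=1$, an abscissa $A$ so large that $u_n\geq \gamma_0$ on $[A-c_n\tau_n,+\infty)$, the datum $w_0(x):=\gamma_0\mathbf{1}_{[A,+\infty)}(x)$ (extended trivially on the history interval $[-\tau_n,0]$) stays below $V_n$ throughout $[-\tau_n,0]\times\R$. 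Monotonicity of $g$ activates the comparison principle, so the solution $w$ issued from $w_0$ satisfies $w(t,x)\leq u_n(x+c_n t)$ for every $t\geq 0$. The datum $w_0$ meets the hypotheses of Lemma~\ref{lem:tw-monotone2}~$(ii)$, hence $w(t,\cdot)=U_{h_n}(\cdot+c_{h_n}t+\xi_n)+O(e^{-\kappa t})$ uniformly on $\R$. Moving into the frame $y=x+c_n t$ and letting $t\to+\infty$, if $c_{h_n}>c_n$ one would get $\gamma\leq u_n(y)$ for every $y\in\R$, contradicting $u_n(-\infty)=0$.

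It remains to pass to the limit. Given an accumulation point $h\in [0,h_\mu]$, extract a subsequence $h_{n_k}\to h$. From the previous step, $c_{h_{n_k}}\leq c_{n_k}\to 0$, and the monotonicity of $h\mapsto c_h$ (noted in the proof of Lemma~\ref{lem:tw-monotone2}~$(iii)$) provides a uniform lower bound, so $c_{h_{n_k}}\to c_*$ along a further subsequence. Normalising $U_{h_{n_k}}(0)=\theta^*/2$, standard elliptic estimates yield $U_{h_{n_k}}\to U_*$ in $C^2_{\mathrm{loc}}(\R)$, where $U_*$ is a bounded nondecreasing solution of the shifted monotone wave equation with shift $h$ and speed $c_*$, satisfying $U_*(-\infty)=0$. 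Introducing an additional normalization at a height in $(\theta^*,\gamma)$ and arguing as in Lemma~\ref{lem:tw-monotone2}~$(iii)$ rules out $U_*(+\infty)=\theta^*$; the uniqueness statement of Lemma~\ref{lem:tw-monotone2}~$(i)$ then forces $c_*=c_h$, and the inequality $c_h\leq 0$ follows.

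The main obstacle is the parabolic comparison yielding $c_{h_n}\leq c_n$: the history datum $w_0$ must simultaneously be dominated by the supersolution $V_n$ on the whole delay window $[-\tau_n,0]$ and satisfy the bistable-type conditions of Lemma~\ref{lem:tw-monotone2}~$(ii)$, which requires exploiting $u_n(+\infty)=1$ quantitatively. A secondary difficulty is the identification $c_*=c_h$ at a positive shift $h$, since the energy identity used in the local step of Lemma~\ref{lem:tw-monotone2}~$(iii)$ is not available in the nonlocal setting and has to be replaced by a two-height normalisation combined with uniqueness.
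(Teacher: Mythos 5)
Your argument for the bound $\limsup_n h_n\le h_\mu$ coincides with the paper's. For the inequality $c_h\le 0$, however, you take a genuinely different route from the paper, and it opens a gap that you flag but do not close. You compare at fixed $n$, showing that $V_n(t,x):=u_n(x+c_nt)$ is a supersolution of the $g$-problem with shift $h_n$ and deducing $c_{h_n}\le c_n$; this step is sound, and with $c_n\to 0$ it gives $\limsup_n c_{h_n}\le 0$. But to conclude $c_h\le 0$ along a subsequence $h_{n_k}\to h$ you must show $c_{h_{n_k}}\to c_h$, i.e.\ continuity of $h\mapsto c_h$ at the accumulation point. The paper never establishes this: Lemma~\ref{lem:tw-monotone2}~$(iii)$ proves continuity only at $h=0$, and its proof relies essentially on multiplying the \emph{local} limit equation $-U''+c^*U'=G(U)$ by $U'$ and integrating, which fixes the sign of $c^*$ when the normalized limits degenerate to monostable profiles. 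At a positive shift $h$ the term $G(U(\cdot-h))$ produces a shifted cross-term with no definite sign, so your ``two-height normalisation combined with uniqueness'' does not rule out the degenerate scenario in which one normalized limit connects $0$ to $\theta^*$ and the other connects $\theta^*$ to $\gamma$ at a spurious speed $c_*\neq c_h$; the uniqueness in Lemma~\ref{lem:tw-monotone2}~$(i)$ covers only bistable waves from $0$ to $\gamma$ and is of no help once the limit degenerates.

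The paper sidesteps all of this by passing to the limit in $n$ \emph{before} any comparison. Since $c_n\to 0$, the normalized $u_n$ converge to a \emph{stationary} profile $u$ solving $-u''=f(u(\cdot-h))-u$; two preliminary steps show that $u$ crosses $\theta$ at a finite point and satisfies $\liminf_{x\to+\infty}u(x)>\theta$, which allows one to fit a step datum $w_0\le u$ of the form required by the stability result Lemma~\ref{lem:tw-monotone2}~$(ii)$. Because $g\le f$ and $u$ is time-independent, $u$ is a supersolution of the $g$-problem with the \emph{fixed} shift $h$, so $w(t,\cdot)\le u$ for all $t$; stability then forces $U_h(c_ht+\xi)-qe^{-kt}\le u(0)=\theta/2$, and $c_h>0$ would give $\gamma\le\theta/2$, a contradiction. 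No continuity of $c_h$ in $h$ is ever invoked. If you wish to keep your finite-$n$ route, you would first have to prove, as a separate lemma, the continuity of the bistable speed $c_h$ in the shift parameter for the nonlocal problem.
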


\begin{proof}
Note that the upper bound \eqref{truc} for $h_n$ follows from Lemma \ref{LE-behaviour} and Lemma \ref{LE-location}. It remains to prove the second part of the lemma. To that aim assume, up to a subsequence, that
\begin{equation}\label{hyp-contra}
c_{n}\tau_n\to h\in [0,h_\mu].
\end{equation}
Observe that $c_n\to 0$ as $n\to\infty$.
Up to translation, we assume that $u_n$ is normalized so that
\begin{equation*}
u_n(0)=\frac{\theta}{2}.
\end{equation*} 
Next for each $n\geq 0$, we introduce $\varrho_n>0$ and $\sigma_n\in (\varrho_n+c_n\tau_n,\infty]$ defined by
\begin{equation*}
u_n\left(\varrho_n\right)=\theta,\;\;u'_n(x)>0,\;\forall x\in \left(-\infty,\sigma_n\right)\text{ and }u_n ' \left(\sigma_n\right)=0.
\end{equation*}
Observe that because of elliptic regularity, possibly along a subsequence, $u_n\to u$ as $n\to\infty$ locally uniformly where $u$  satisfies
\begin{equation*}
\begin{cases}
-u''(x)=f\left(u(x-h)\right)-u(x),\;\forall x\in \R,\\
u(0)=\frac{\theta}{2},\;u'(x)\geq 0,\;\forall x\in \left(-\infty,\sigma_\infty \right),
\end{cases}
\end{equation*}
wherein we have set $\sigma_\infty :=\displaystyle \liminf_{n\to\infty}\sigma_n \in (0,+\infty]$. By setting $\varrho_\infty =\displaystyle\liminf_{n\to\infty}\varrho_n$, it furthermore satisfies
\begin{equation*}
u\left(\varrho_\infty \right)=\theta\text{ if $\varrho_\infty <\infty$, and }u \leq \theta\text{ if }\varrho_\infty =+\infty. 
\end{equation*}
We shall show that the existence of such a function implies that $c_h\leq 0$. Our argument is now split into three steps.

\noindent {\bf First step:}  we show that $\varrho_\infty <\infty$. To that aim we argue by contradiction by assuming that $\varrho_\infty =\infty$. In that case one has $u\leq \theta$ and is nondecreasing on $\R$. Hence $u$ satisfies
\begin{equation*}
-u''(x)= f(u(x-h))-u(x)\leq u(x-h)-u(x)\leq 0,\;\forall x\in\R,
\end{equation*} 
and therefore it must be constant, which is clearly impossible.

\noindent {\bf Second step:} we show that 
\begin{equation}
\liminf_{x\to + \infty} u(x)>\theta.
\end{equation}
We consider both cases $\sigma_\infty =\infty$ and $\sigma_\infty <\infty$. In the former, then $u' \geq 0$ and hence $u(x)\to 1$ as $x\to + \infty$. In the latter, then because of Lemma~\ref{LE-localisation} one has for any $n\geq 0$:
\begin{equation*}
u_n(x)\geq M_2>\theta,\;\forall x\geq \sigma_n.
\end{equation*}
Passing to the limit $n\to \infty$ ensures that $\liminf_{x\to + \infty} u(x)\geq M_2$ and this completes the proof of the second step.

\noindent {\bf Third step:}  we conclude the proof of the lemma, that is we prove $c_h\leq 0$. Argue by contradiction by assuming that $c_h>0$ and
consider $w=w(t,x)$ the solution of the parabolic problem
\begin{equation}\label{PB_f-}
L[w](t,x):=\left[\partial_t-\partial_{xx}\right]w(t,x)-g\left(w(t,x-h)\right)+w(t,x)=0,
\end{equation}
supplemented with the initial datum $w_0$ defined by
\begin{equation*}
w_0(x)=\begin{cases} \theta+\delta &\text{ if }x\geq x_0,\\ 0 &\text{ if }x<x_0,\end{cases}
\end{equation*}
wherein $x_0$ is some large enough value and $\delta>0$ small enough so that $w_0(x)\leq u(x)$ for all $x\in\R$.
Next from $g\leq f$ we get 
\begin{equation*}
L[u]\geq 0 = L[w],\;\forall (t,x)\in (0,\infty)\times\R,
\end{equation*}
so that the comparison principle  (recall that the function $g$ is nondecreasing) 
 ensures
\begin{equation}\label{ineq-comp}
w(t,x)\leq u(x),\;\forall (t,x)\in (0,\infty)\times\R.
\end{equation}
Now recalling the stability result stated in Lemma \ref{lem:tw-monotone2} $(ii)$, one has
\begin{equation*}
U_h\left(x+c_ht+\xi\right)-qe^{-k t}\leq w(t,x)\leq u(x), 
\end{equation*}
for some constant $\xi\in\R$, $q>0$ and $k>0$.
Plugging $x=0$ and using the normalisation condition $u(0)=\frac{\theta}{2}$ yield
\begin{equation*}
U_h\left(c_ht+\xi\right)-qe^{-k t}\leq \frac{\theta}{2}. 
\end{equation*}
If $c_h>0$, letting $t\to\infty$ implies that $
\gamma=\lim_{x\to + \infty}U_h(x)\leq \frac{\theta}{2}$.
Recalling that $\gamma>\theta$, we have reached a contradiction. 
\end{proof}

We are now in the position to complete the  proof of Theorem \ref{th:large-delay}. 

\begin{proof}[Proof of Theorem \ref{th:large-delay}]
In view of Lemma \ref{LE-conv}, to complete the proof of this result, it is sufficient to show that when $\mu=f'(1)$ is large enough (in absolute value) then
\begin{equation*}
c_{h}>0,\;\forall h\in [0,h_\mu].
\end{equation*}
Recalling now Lemma \ref{lem:tw-monotone2} $(iii)$ and its subsequent Remark \ref{REM1}, it is therefore sufficient to prove that
\begin{equation*}
\lim_{\mu\to -\infty} h_\mu=0,
\end{equation*}
or in other words that $\lim_{\mu \to -\infty} \kappa_\mu = + \infty$. To that aim let $\lambda_\mu \in S_0$ be given such that
\begin{equation*}
\kappa_\mu=\left|\frac{1-\mu e^{-\lambda_\mu}}{\lambda_\mu^2}\right|,\;\forall \mu<-1.
\end{equation*}
Note indeed that the infimum in the definition of $\kappa_\mu$ (see Lemma~\ref{LE-location}) must be attained.
Next consider a sequence $\mu_n\to-\infty$ such that
\begin{equation*}
\lim_{n\to\infty}\kappa_{\mu_n}=\liminf_{\mu \to-\infty} \kappa_\mu,
\end{equation*}
and let us show that this limit is $+\infty$. This means that the sequence $\left\{\kappa_{\mu_n}\right\}$ is unbounded. 

To reach this goal, let us argue by contradiction by assuming that $\kappa_{\mu_n}$ is bounded. Next let us observe that the sequence $\{\lambda_{\mu_n}\}$ is bounded. Indeed one has for all $n\geq 0$
\begin{equation}\label{pml}
\left|\lambda_{\mu_n}\right|^2e^{\Re\left(\lambda_{\mu_n}\right)}\kappa_{\mu_n}=\left|e^{\lambda_{\mu_n}}-\mu_n\right|.
\end{equation}
This implies that 
\begin{equation*}
\liminf_{n\to\infty}\Re\left(\lambda_{\mu_n}\right)>-\infty,
\end{equation*}
and thus the sequence $\left\{\lambda_{\mu_n}\right\}$ is bounded. From this boundedness property, it follows from \eqref{pml} that the sequence $\left\{\kappa_{\mu_n}\right\}$ is unbounded, a contradiction.

We conclude that $\kappa_\mu\to +\infty$ as $\mu\to-\infty$, which ends the proof of Theorem \ref{th:large-delay}.
\end{proof}

\subsection{Convergence to a wavetrain}\label{ss:wavetrain}

This short subsection is devoted to the proof of the fact stated in Remark \ref{rem:wavetrain}: under the additional assumption $M_2>\beta$ (see the beginning of subsection \ref{ss:osci}), we can actually prove convergence to a wavetrain. We start with standard definitions.

\begin{definition}[Omega limit orbit, set] 
A function $u\in C^2(\R)$ is said to be an omega limit orbit of $U$ if there exists a sequence $x_n\to \infty$ as $n\to\infty$ such that
$$
u(x)=\lim_{n\to\infty} U(x+x_n),\text{ locally uniformly for $x\in\R$}.
$$
The set of all omega limit orbits of $U$ is called the omega limit set of $U$ and it is denoted by $\omega(U)$. 

Similarly we define the omega limit set, $\omega(u)$, for any orbit $u\in\omega(U)$.
\end{definition}

 {}From  Lemma \ref{LE-localisation}, one may first notice that any $u\in\omega(U)$ has to satisfy $M_2\leq u\leq M_1$. Next, for each $u\in\omega(U)$, we define, for each $x\in\R$, $\Psi[u]_x\in C^0(\mathbb K)$ by 
\begin{equation*}
\Psi[u]_x(\theta):=\begin{cases} u(x+\theta)-1 &\text{ if $\theta\in [-h,0]$},\\ u'(x) &\text{ if $\theta=1$}. \end{cases}
\end{equation*}
Here it is easy to check that 
\begin{equation*}
u\equiv 1\;\Leftrightarrow\;\exists x\in\R,\;  \forall \theta\in\mathbb K,\;   \Psi_x[u](\theta)=0. 
\end{equation*}
Moreover, due to \eqref{sign-changes} and the results in \cite{Malet-Paret}, one obtains that if $u\not\equiv 1$ then ${\rm sc}\,\left(\Psi[u]_x\right)\in\{0,1,2\}$ for all $x\in\R$ and, if there exists some $x_0\in\R$ such that ${\rm sc}\,\left(\Psi[u]_x\right)=0$  then it remains equal to~$0$ for all $x\geq x_0$. The latter situation cannot occur. Indeed in that case, $u=u(x)$ should be, for $x$ large enough, increasing and above $1$ or decreasing and below $1$. We have already shown that these situations are both impossible.
This means that when $u\not\equiv 1$ then ${\rm sc}\,\left(\Psi[u]_x\right)\in\{1,2\}$ for all $x\in\R$.

In view of these remarks, the proof of Lemma \ref{LE-behaviour} directly adapts to get that, if $u \not \equiv  1$, then it may not converge to 1 at $+\infty$. In other words, we have the following.

\begin{corollary}\label{CORO}
Let Assumption \ref{ass:f-osc} hold. Under the assumptions of Lemma \ref{LE-behaviour}, any orbit $u=u(x)$ in $\omega(U)$ satisfies the following alternative:
either $u\equiv 1$ or $\omega(u)\neq \{1\}$. Moreover one has, for each $u\in\omega(U)$,
\begin{equation*}
u \not\equiv 1\,\Leftrightarrow\; \begin{cases}\Psi[u]_x\in C^0(\mathbb K)\setminus \{0\},\\ {\rm sc}\,\left(\Psi[u]_x\right)\in\{1,2\},\end{cases}\forall x\in\R.
\end{equation*}
\end{corollary}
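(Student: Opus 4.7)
The plan is to prove the characterization (the \emph{moreover} part), from which the dichotomy follows as a byproduct. I first collect the structural properties inherited by any orbit $u\in\omega(U)$: such a $u$ is a bounded $C^2$ solution of the same travelling wave equation as $U$, with the same speed $c$ and shift $h=c\tau$; Lemma \ref{LE-localisation} combined with the local uniform convergence $U(\cdot+x_n)\to u$ then gives $M_2\leq u\leq M_1$ on the whole line. The sign-change bound ${\rm sc}\,(\Psi[u]_x)\leq 2$ for every $x\in\R$ follows from \eqref{sign-changes} applied along $U(\cdot+x_n)$, together with the lower semi-continuity of ${\rm sc}\,$ under local uniform convergence of nonzero continuous functions.

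Next I would establish $u\equiv 1 \Leftrightarrow \exists x_0\in\R,\ \Psi[u]_{x_0}\equiv 0$. The forward direction is trivial. For the converse, the hypothesis yields $u\equiv 1$ on $[x_0-h,x_0]$ with $u'(x_0)=0$; evaluating the travelling wave equation at $x_0$ gives $u''(x_0)=0$, and Cauchy--Lipschitz on successive intervals of length $h$ (on each of which $f(u(\cdot-h))$ is already known to equal $1$) propagates $u\equiv 1$ forward to $[x_0,+\infty)$. Going backward, testing the equation on $[x_0-h,x_0]$ gives $f(u(\cdot-h))\equiv 1$, so $u$ takes values only in $\{\alpha,1\}$ on $[x_0-2h,x_0-h]$; continuity at the endpoint where $u=1$ forces $u\equiv 1$ on this whole interval, and iteration yields $u\equiv 1$ on $\R$.

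The heart of the argument is to exclude ${\rm sc}\,(\Psi[u]_x)=0$ in the non-constant case and, simultaneously, to prove that $u$ does not converge to $1$ at $+\infty$. Both reduce to replaying the proof of Lemma \ref{LE-behaviour} with $u$ in place of $U$. If ${\rm sc}\,(\Psi[u]_{x_0})=0$ for some $x_0$, the monotonicity of the Mallet-Paret and Sell discrete Lyapunov functional (applicable because $u$ takes values in $[M_2,M_1]\subset(\beta,M_1]$, which ensures the positive-feedback structure of \eqref{system-z} via the strengthened assumption $M_2>\beta$ of Remark \ref{rem:wavetrain}) propagates ${\rm sc}\,(\Psi[u]_x)=0$ to all $x\geq x_0$; this forces $u-1$ and $u'$ to keep a constant sign on $[x_0,+\infty)$, hence $u$ is bounded and monotone there, and therefore converges to a fixed point of $f$ in $[M_2,M_1]$, necessarily $1$. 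It thus suffices in both situations to treat the case $u(x)\to 1$ as $x\to+\infty$: setting $z_0=u-1$, $z_1=u'$ and using that $(z_0,z_1)$ solves \eqref{system-z}, the argument of Lemma \ref{LE-behaviour} transfers directly. Claim \ref{claim0} rules out super-exponential decay; the Mallet-Paret asymptotic expansion from \cite{MP} then produces an oscillatory term $A e^{-\gamma x/h}\cos(\omega x/h+\varphi)$ with $-\gamma+i\omega$ a root of $\Delta^\mu(\cdot;h;\tau)=0$ lying in $S_0$. The hypothesis of Lemma \ref{LE-behaviour} forbids such roots in $S_0$, so $|\omega|>2\pi$, producing at least three sign changes of $\Psi[u]_x$ on arbitrarily long intervals and contradicting ${\rm sc}\,(\Psi[u]_x)\leq 2$.

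The main obstacle is verifying that the Mallet-Paret and Sell machinery genuinely transfers from $U$ to a limit orbit $u$: both the positive-feedback structure of the cyclic system \eqref{system-z} and the exponential-expansion results of \cite{MP} must apply on the full range of $u$, which is exactly where the extra condition $M_2>\beta$ from Remark \ref{rem:wavetrain} enters by forcing $f$ to be decreasing throughout the range of $u$. Once the characterization is in place, the dichotomy $u\equiv 1$ or $\omega(u)\neq\{1\}$ is immediate: a non-constant orbit $u$ with $\omega(u)=\{1\}$ would have to converge to $1$ at $+\infty$, which the argument above excludes.
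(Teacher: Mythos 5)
Your argument follows the paper's proof closely: first $M_2\leq u\leq M_1$ from Lemma \ref{LE-localisation}, then the sign-change bound inherited from \eqref{sign-changes} by lower semi-continuity, the characterization of $u\equiv 1$ via $\Psi[u]_{x_0}\equiv 0$, exclusion of ${\rm sc}\,(\Psi[u]_x)=0$, and finally adaptation of Lemma \ref{LE-behaviour} to rule out convergence to $1$. Your explicit ODE propagation argument for the equivalence $u\equiv 1\Leftrightarrow \exists x_0,\ \Psi[u]_{x_0}\equiv 0$, and your clean reduction of the ${\rm sc}=0$ case to the $u(+\infty)=1$ case, are both welcome; the paper is terser on these points.

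There is, however, one error in the reasoning: you invoke the strengthened hypothesis $M_2>\beta$ of Remark \ref{rem:wavetrain}, claiming it is needed to secure the positive-feedback structure of \eqref{system-z} and hence the applicability of the Mallet-Paret--Sell discrete Lyapunov functional. But Corollary \ref{CORO} is stated without $M_2>\beta$, and rightly so. The feedback condition actually used for the sign-change functional (stated in the proof of Lemma \ref{LE-behaviour}) is only that $g(z_0(t),0,z_0(t-h))$ be nonnegative (resp.\ nonpositive) whenever $z_0(t)$ and $z_0(t-h)$ are both nonnegative (resp.\ nonpositive). This holds as soon as $u$ is trapped in $[M_2,M_1]$ and $M_2>\alpha$, which is guaranteed by Assumption \ref{ass:f-osc} alone, since $f\geq 1$ on $[\alpha,1]$ and $f\leq 1$ on $[1,M_1]$; no sign condition on $f'$ over the range is needed. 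Nor does the Mallet-Paret exponential expansion from \cite{MP} for the linearized system \eqref{system-y*} require anything about the range of $u$. The hypothesis $M_2>\beta$ only becomes necessary in Corollary \ref{wave-train}, where the Poincar\'e-Bendixson theorem of \cite{MP-S} requires the stronger pointwise condition $\partial g/\partial z_2>0$, i.e.\ $f'<0$ on the range of $u$. As written, your proof therefore establishes only a weaker version of the corollary than the one stated; the fix is simply to replace the appeal to $M_2>\beta$ by the observation that $M_2>\alpha$ already yields the required feedback sign condition.
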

In particular, the above corollary ensures that $\omega(U)$ does not contain any non-constant orbit converging to $1$ at $+\infty$ and thus $\omega(U)$ does not contain any nontrivial homoclinic orbit to $1$.

We can now state the result of convergence to a wavetrain.

\begin{corollary}\label{wave-train}
Let Assumption \ref{ass:f-osc} hold and suppose further that $M_2>\beta$. 
Under the assumptions of Lemma \ref{LE-behaviour}, there exists a non-constant periodic wavetrain solution $U_*$ for the wave speed~$c$, namely a solution of 
\begin{equation*}
-U_*''(x)+cU_*'(x)=f(U_*(x-h))-U_*(x),\quad  U_*(x+T)=U_*(x) \text{ for some $T>0$},\quad  x\in \R,
\end{equation*}
such that $M_2\leq U_*\leq M_1$, ${\rm sc}\,\left(\Psi[U_*]_x\right)\in\{1,2\}$ for all $x\in\R$ and such that $\omega(U)$ consists in the single periodic orbit $U_*$, in the sense that $\omega(U)=\{U_*(\cdot +p),\;p\in\R\}$.
\end{corollary}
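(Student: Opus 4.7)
The plan is to apply the Poincaré--Bendixson type theorem of Mallet-Paret and Sell \cite{Malet-Paret} to the omega limit set $\omega(U)$, working with the cyclic delayed system \eqref{system-z} satisfied by $(z_0, z_1) = (U - 1, U')$. First I would check that $\omega(U)$ is a nonempty, compact, translation-invariant subset of $C^2_{\text{loc}}(\R)$: nonemptiness and compactness follow from the boundedness of $U$ together with standard interior elliptic estimates (yielding uniform $C^{2,\alpha}$ bounds on every translate of $U$) and the Arzelà--Ascoli theorem. Translation invariance of $\omega(U)$ is immediate from the definition of an omega limit orbit.

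The additional hypothesis $M_2>\beta$ is what really drives the argument: combined with Lemma~\ref{LE-localisation}, it ensures that every $u\in\omega(U)$ satisfies $\beta<M_2\le u\le M_1$. Since $f$ is strictly monotone (decreasing) on $[M_2,M_1]$, each such $u$ evolves, in the variables $(z_0,z_1)=(u-1,u')$, under a cyclic delay system with monotone feedback on the relevant range, exactly the framework in which the discrete Lyapunov functional $x\mapsto {\rm sc}\,(\Psi[u]_x)$ of \cite{Malet-Paret} is well-defined and non-increasing. Corollary~\ref{CORO} already tells us that $\mathrm{sc}(\Psi[u]_x)\in\{1,2\}$ for all $x\in\R$ when $u\not\equiv 1$; since the functional is integer-valued, non-increasing, and bounded, it is eventually constant, and a standard limit/lower-semicontinuity argument shows that this common value is attained on every orbit of $\omega(U)$.

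I would then invoke the Poincaré--Bendixson theorem for such cyclic delay systems: the omega limit set of any bounded orbit with constant discrete Lyapunov value is either a single equilibrium or a single non-constant periodic orbit. The only equilibrium compatible with the range $[M_2,M_1]$ is the constant $u\equiv 1$, and Corollary~\ref{CORO} rules out convergence to it for any nontrivial $u\in\omega(U)$. Therefore, for each $u\in\omega(U)$, $\omega(u)$ is a non-constant periodic orbit $\mathcal{O}_u:=\{U_*(\cdot+p):p\in\R\}$ for some non-constant $T$-periodic wave profile $U_*$ satisfying the travelling wave equation and the required bounds $M_2\le U_*\le M_1$.

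It then remains to check that $\omega(U)$ itself is a single periodic orbit, not a union of several. For this I would argue that since each $u\in\omega(U)$ belongs to $\omega(U)$ together with all its translates (by invariance), and since $\omega(u)\subset \omega(U)$ is a single periodic orbit, one must have $u$ itself on that periodic orbit; then connectedness of $\omega(U)$ (as the limit set of the continuous trajectory $x\mapsto U(\cdot+x)$) forces a unique choice of $\mathcal{O}_u$, independent of $u$. The main obstacle, and the place where one has to be careful, is the verification that the Mallet-Paret--Sell machinery applies verbatim to our second-order wave equation rewritten as the first-order cyclic system \eqref{system-z}, with the discrete Lyapunov functional $\mathrm{sc}$ defined on the non-standard domain $\mathbb{K}=[-h,0]\cup\{1\}$; once this is granted, the rest of the argument is a rather mechanical adaptation of the classical Poincaré--Bendixson scheme.
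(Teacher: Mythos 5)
Your proposal is correct and follows essentially the same route as the paper: both use $M_2>\beta$ together with Lemma \ref{LE-localisation} to trap $U$ in the range $(\beta,M_1]$ where $-f'>0$, which makes \eqref{system-z} a monotone cyclic feedback system for large $x$, and both then invoke the Mallet-Paret--Sell Poincar\'e--Bendixson theorem and use Corollary \ref{CORO} to exclude the equilibrium/homoclinic alternative. The only cosmetic difference is that the paper cites the dichotomy directly at the level of $\omega(U)$ (``either $\omega(U)$ is a single non-constant periodic orbit, or each orbit in $\omega(U)$ is homoclinic to $1$''), which removes the need for your separate connectedness argument at the end; your version, applying the theorem to each $u\in\omega(U)$ and then gluing via connectedness, is a slightly more roundabout but still valid way of reaching the same conclusion.
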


\begin{proof}
Note that the additional assumption $M_2>\beta$ allows us to obtain, using Lemma \ref{LE-localisation}, that $U(x)$ is trapped between $\beta$ and $M_1$ for all $x\geq \sigma_*$. Hence, the function $g=g(z_0,z_1,z_2)$, defined in \eqref{system-z},  satisfies the condition
\begin{equation*}
\frac{\partial g}{\partial z_0}\equiv 1,\quad \frac{\partial g}{\partial z_2}\left(z_0(t),z_1(t),z_0(t-h)\right)=-f'(U(t-h))>0, \text{ for all $t\geq \sigma_*+h$},
\end{equation*}
in view of Assumption \ref{ass:f-osc}. Hence, the omega limit set of $(z_0,z_1)$, or equivalently $\omega(U)$, satisfies the Poincar\'e-Bendixson dichotomy, as stated in \cite[Theorem 2.1]{MP-S}. This reads, in our context, as either $\omega(U)$ consists in a single non-constant periodic orbit or, each orbit in $\omega(U)$ is homoclinic to $1$.
However, since $\omega(U)\neq \{1\}$ (because $U$ does not converge to $1$) and since it does not contain any nontrivial orbit homoclinic to $1$ from the previous corollary, it therefore consists in a single periodic orbit. This proves the corollary.
\end{proof}


\begin{thebibliography}{ABCD}

\bibitem{Alf-Cov} M. Alfaro and J. Coville, {\it Rapid travelling waves in the
nonlocal Fisher equation connect two unstable states}, Appl. Math.
Lett. {\bf 25} (2012),  2095--2099.

\bibitem{Alf-Cov-Rao1} M. Alfaro, J. Coville and G. Raoul, {\it Travelling waves in a
nonlocal reaction-diffusion equation as a model for a population
structured by a space variable and a phenotypical trait}, Comm.
Partial Differential Equations {\bf 38} (2013), 2126--2154.

\bibitem{Alf-Cov-Rao2} M. Alfaro, J. Coville and G. Raoul, {\it Bistable travelling waves for nonlocal reaction diffusion
equations}, Discrete Contin. Dyn. Syst. Ser. A. {\bf 34} (2014),
1775--1791.


\bibitem{Ber-Nad-Per-Ryz} H. Berestycki, G. Nadin, B. Perthame and
L. Ryzhik, {\it The non-local Fisher-KPP equation: travelling
waves and steady states}, Nonlinearity {\bf 22} (2009), 
2813--2844.



\bibitem{Ber-Nic-Sch} H. Berestycki, B. Nicolaenko and B. Scheurer, \emph{Traveling wave solutions to combustion models and their singular limits}, SIAM J. Math. Anal. \textbf{16} (1985),  1207--1242.

\bibitem{Ber-Nir} H. Berestycki and  L. Nirenberg, \emph{Traveling fronts in
cylinders}, Ann. Inst. H. Poincar\'e Anal. Non Lin\'eaire {\bf 9}
(1992),  497--572.

\bibitem{Chen} X. Chen, \emph{Existence, uniqueness, and asymptotic stability of traveling waves in nonlocal evolution equations}, Adv. Diff. Eq. {\bf 2} (1997), 125--160.

\bibitem{ducrot} A. Ducrot, \emph{Travelling waves for a size and space structured model in population dynamics: point to sustained oscillating solution connections}, J. Differential Equations {\bf 250} (2011), 410--449.

\bibitem{Duc-Nad} A. Ducrot and G. Nadin, {\it Asymptotic behaviour of travelling waves for the delayed Fisher-KPP equation}, J. Differential Equations {\bf 256} (2014), 3115--3140.

\bibitem{Fan-Zha} J. Fang and X.-Q. Zhao, {\it Monotone wavefronts of the nonlocal Fisher-KPP equation},
 Nonlinearity {\bf 24} (2011), 3043--3054.

\bibitem{Fan-Zha2} J. Fang and X.-Q. Zhao, {\it Bistable traveling waves for monotone semiflows with applications},
J. Eur. Math. Soc.  {\bf 17} (2015), 2243--2288.






 \bibitem{Gil-Tru} D. Gilbarg and N. Trudinger, {\it Elliptic Partial
Differential Equations of Second Order}, Springer-Verlag: Berlin,
1977.

\bibitem{Gom-Tro} A. Gomez and S. Trofimchuk, {\it Monotone traveling
wavefronts of the KPP-Fisher delayed equation}, J. Differential
Equations {\bf 250} (2011), 1767--1787.

\bibitem{Gourley} S. A. Gourley and J. W.-H. So, {\it Extinction and wavefront propagation in a reaction-diffusion
model of a structured population with distributed maturation delay}, Proc. Roy. Soc. Edin-
burgh Sect. A {\bf 133} (2003), 527--548.

\bibitem{Gri-Rao} Q. Griette and G. Raoul, {\it Existence and qualitative properties of travelling waves for
              an epidemiological model with mutations}, J. Differential Equations {\bf 260} (2016), 7115--7151.
              
\bibitem{Hale} J.K. Hale, S.M. Verduyn Lunel, Introduction to Functional Differential Equations, Appl. Math. Sci., Springer-Verlag, 1993.  
   
\bibitem{Has-Tro} K. Hasik and S. Trofimchuk, {\it Slowly
oscillating wavefronts of the KPP-Fisher delayed equation},
Discrete Contin. Dyn. Syst. {\bf 34} (2014), 3511--3533.


\bibitem{Li} W.-T. Li, S. Ruan and Z.-C. Wang, {\it On the diffusive Nicholson's blowflies equation with
nonlocal delay}, J. Nonlinear Sci. {\bf 17} (2007),  505--525.

\bibitem{Liang} X. Liang and X.-Q. Zhao, {\it Asymptotic speeds of spread and traveling waves for monotone semiflows with applications}, Comm. Pure Appl. Math. {\bf 60} (2007), 1--40.

\bibitem{Lin} G. Lin and S. Ruan, {\it Persistence and failure of complete spreading in delayed reaction-diffusion equations}, Proceedings of the American Mathematical Society {\bf 144} (2016), 1059--1072. 

\bibitem{MA} S. Ma, {\it Traveling waves for non-local delayed diffusion equations via auxiliary equations}, J. Differential Equations {\bf 237} (2007), 259--277.

\bibitem{Ma} S. Ma and J. Wu, {\it Existence, uniqueness and asymptotic stability of traveling
wavefronts in a non-local delayed diffusion equation}, J. Dynam. Differential Equations {\bf 19} (2007), 391--436.

\bibitem{MP}J. Mallet-Paret, {\it The Fredholm alternative for functional differential equations of mixed type}, J. Dynam. Differential
Equations {\bf 11} (1999), 1--48.

\bibitem{Malet-Paret} J. Mallet-Paret and G. R. Sell, {\it Systems of differential delay equations: Floquet multipliers and discrete Lyapunov functions}, J. Differential Equations {\bf 125} (1996), 385--440. 

\bibitem{MP-S} J. Mallet-Paret and G. R. Sell, {\it The Poincar\'e-Bendixson theorem for monotone cyclic feedback systems with delay}, J. Differential Equations {\bf 125} (1996), 441--489. 


\bibitem{Nad-Per-Ros-Ryz} G. Nadin, B. Perthame, L. Rossi and L.
Ryzhik, {\it Wave-like solutions for nonlocal reaction-diffusion
equations: a toy model}, Math. Model. Nat. Phenom.
{\bf 8} (2013), 33--41.


\bibitem{Nichol} A. J. Nicholson, {\it Compensatory reactions of populations to stresses, and their evolutionary
significance}, Aust. J. Zool. {\bf 2} (1954), 1--8.

\bibitem{Nichol1} A. J. Nicholson, {\it An outline of the dynamics of animal populations}, Aust. J. Zool. {\bf 2} (1954),
9-65.

\bibitem{Sch} K. W. Schaaf, {\it Asymptotic behavior and travelling wave solutions for parabolic
functional differential equations}, Trans. Amer. Math. Soc. {\bf
302} (1987), 587--615.

\bibitem{Smi-Zha} H. L. Smith and X. Q. Zhao, {\it Global asymptotic stability of
travelling waves in delayed reaction-diffusion equations}, SIAM J.
Math. Anal. {\bf 31} (2000), 514--534.

\bibitem{So} J. W.-H. So, J. Wu, and X. Zou, {\it A reaction-diffusion model for a
single species with age structure. I. Travelling wavefronts on unbounded domains}, R.
Soc. Lond. Proc. Ser. A Math. Phys. Eng. Sci. {\bf 457} (2001), 1841--1853.

\bibitem{Thieme} H. R. Thieme and X.-Q. Zhao, {\it Asymptotic speeds of spread and traveling waves
for integral equations and delayed reaction-diffusion models}, J. Differential Equations {\bf 195}  (2003), 430--470.

\bibitem{Trofimchuk} E. Trofimchuk, V. Tkachenko and S. Trofimchuk, {\it Slowly oscillating wave solutions of a single species reaction-diffusion equation
with delay}, J. Differential Equations {\bf 245} (2008), 2307--2332.

\bibitem{Wan-Li-Rua} Z.-C. Wang, W.-T. Li and S. Ruan,
{\it Existence and stability of traveling wave fronts in reaction
advection diffusion equations with nonlocal delay}, J.
Differential Equations {\bf 238} (2007), 153--200.

\bibitem{Wang} Z.-C. Wang, W.-T. Li and S. Ruan, {\it Entire solutions in bistable reaction-
diffusion equations with nonlocal delayed nonlinearity}, Trans. Amer. Math. Soc. {\bf 361} (2009), 2047--2084.


\end{thebibliography}
\end{document}